\documentclass{article}     

\usepackage[a4paper, outer=3cm]{geometry}
\usepackage{fullpage}
\DeclareOldFontCommand{\bf}{\normalfont\bfseries}{\mathbf}

\usepackage[T1]{fontenc}
\usepackage[utf8]{inputenc}
\usepackage{times}

\usepackage{xspace}

\usepackage{longtable}
\usepackage{amsmath, amsthm, amssymb, amsthm, stmaryrd}
\usepackage{enumerate}
\usepackage{multirow}

\usepackage{algorithm}
\usepackage{algpseudocode}
\algnewcommand\algorithmicinput{\textbf{Input:}}
\algnewcommand\algorithmicoutput{\textbf{Output:}}
\algnewcommand\Input{\item[\algorithmicinput]}
\algnewcommand\Output{\item[\algorithmicoutput]}

\usepackage{hyperref}
\usepackage{cleveref}

\usepackage{pgf,tikz}
\usepackage{svg}
\usetikzlibrary{arrows}
\usetikzlibrary{decorations.markings}
\usepackage[dvipsnames]{xcolor}

\usepackage{pdftricks}
\begin{psinputs}
	\usepackage{pstricks,pst-plot,pst-node,pst-func}
\end{psinputs}
\usepackage{graphics,graphicx}

\tikzstyle{vertex}=[circle, draw, inner sep=1pt, minimum size=6pt]
\newcommand{\vertex}{\node[vertex]}

\usetikzlibrary{decorations.pathreplacing}

\tikzset{->-/.style={decoration={
  markings,
  mark=at position .5 with {\arrow{>}}},postaction={decorate}}}

 \usepackage{marginnote}

\newcommand{\m}[1]{}

\usepackage[nothm]{thmbox}
\usepackage{amsthm}
\usepackage{thmtools}
\usetikzlibrary{patterns}
\usetikzlibrary{calc}

\declaretheorem[parent=section,thmbox=M]{theorem}

\declaretheorem[numberlike=theorem,thmbox=M]{proposition}
\declaretheorem[numberlike=theorem,thmbox=M]{corollary}
\declaretheorem[numberlike=theorem,thmbox=M]{conjecture}
\declaretheorem[numberlike=theorem,thmbox=M]{problem}

\declaretheorem[numberlike=theorem,thmbox=M]{definition}

\newtheorem{claim}{Claim}[theorem]

\declaretheorem[numberlike=theorem]{lemma}

\newcommand{\Gya}{Gy\'arf\'as\xspace}

\newcommand{\Erd}{Erd\H os\xspace}

\newcommand{\sm}{\setminus}

\newcommand{\ra}{\rightarrow}
\newcommand{\Ra}{\Rightarrow}
\newcommand{\la}{\leftarrow}
\newcommand{\La}{\Leftarrow}

\newcommand{\ora}[1]{\overrightarrow{#1}}

\DeclareMathOperator{\chr}{\chi}
\DeclareMathOperator{\ome}{\omega}

\DeclareMathOperator{\dic}{\ora \chi}
\DeclareMathOperator{\diomega}{\ora\omega}

\DeclareMathOperator{\Forb}{Forb}

\newenvironment{proofclaim}
	{\noindent {\bf Proof of Claim
	:}}
	{\hfill $\square$ \par\vspace{11pt}}

\let\le\leqslant
\let\ge\geqslant

\newcommand{\intv}[2]{\llbracket #1 , #2 \rrbracket}
\usepackage{pythonhighlight}

\usepackage{authblk}

\title{Clique Number of Tournaments II}
\author[1]{Guillaume Aubian}
\author[2,3]{Samuel Coulomb}

\affil[1]{Universit\'e Paris-Panth\'eon-Assas, CRED Paris, France.}
\affil[2]{Universit\'e Paris Cit\'e, CNRS, IRIF, F-75013, Paris, France.}
\affil[3]{DIENS, \'Ecole normale sup\'erieure, CNRS, PSL University, Paris, France.}

\begin{document}

\maketitle

\begin{abstract}
    The clique number $\diomega(T)$ of a tournament $T$ is the minimum clique number of a graph formed by ordering the vertices of $T$, removing all arcs going forward with respect to the ordering, and turning the remaining arcs into undirected edges. In this paper, we prove that for every integer $k \ge 3$, deciding $\diomega(T) \le k$ is NP-complete. This answers an interrogation of Nguyen, Scott, and Seymour, and contrasts with the classical undirected setting, where deciding $\omega(G) \le k$ is polynomial-time solvable for all fixed integers $k$. On the other hand, we give a polynomial-time algorithm distinguishing tournaments with $\diomega(T)\le2$ from those with $\diomega(T)>100$.

    We also study the tournament analogue of the \Gya-Sumner conjecture. We construct new $\dic$-bounding tournaments and deduce a conjecture of Aboulker, Aubian, Charbit, and Lopes which states that every class of tournaments with bounded twin-width is $\vec\chi$-bounded. We then exhibit new non $\dic$-bounding tournaments which disprove another conjecture of Aboulker, Aubian, Charbit, and Lopes, as well as two conjectures of Kim. Finally, we present infinite families of $3$-$\diomega$-critical and $4$-$\diomega$-critical tournaments.
\end{abstract}

\noindent\textbf{Keywords:} clique number ; tournaments ; NP-hardness ; dichromatic number ; $\dic$-boundedness.

\tableofcontents 

\section{Introduction}

The \emph{chromatic number} of a graph $G$, denoted $\chr(G)$, is the least integer $k$ such that the vertices of $G$ can be partitioned into $k$ stable sets, and the \emph{clique number} of $G$, denoted $\ome(G)$, is the maximum size of a clique in $G$. These parameters, which are two of Karp's 21 NP-complete problems \cite{Karp}, are among the most studied subjects in graph theory, and have found applications in many other domains.

\medskip

The \emph{dichromatic number} of digraph $D$, denoted $\dic(D)$, is the least integer $k$ such that the vertices of $D$ can be partitioned into $k$ acyclic sets. This parameter generalises the chromatic number to the directed setting. On the other hand, there is no fully consensual definition for the clique number of a digraph (see  \cite{ACN21} and \cite{perfect} for two different generalisations). In 2013, Kim \cite{Kim} proposed a definition that, while less natural at first glance, exhibits good properties. It was later rediscovered in \cite{NSS} and \cite{AACL23}, and has seen more attention since ({\it e.g.}~\cite{pod, Crew, Spirkl, complexity}).

Given a digraph $D$ together with a total ordering $\prec$ of its vertices, we define the backedge graph $D^\prec$ as the undirected ordered graph with vertex set $V(D^\prec) = V(D)$, and an edge $uv$ whenever $vu \in A(D)$ and $u \prec v$, that is, keeping the arcs that go backward with regard to the ordering. It is well-known that a digraph is acyclic if and only if it has an ordering, called a topological ordering, for which every arc goes forward. This is equivalent to having an edgeless backedge graph. With that in mind, it is easy to show that:
\[
    \dic(D) = \min \big\{ \chr(D^\prec) \colon \prec \text{ total ordering of } V(D) \big\}.
\]

This motivates the following definition of the \emph{directed clique number} of a digraph $D$.
\[
    \diomega(D) := \min \big\{ \ome(D^\prec) \colon \prec \text{ total ordering of } V(D) \big\}
\]

The interests of backedge graphs go beyond topological ordering and colouring. Many others digraphs notions may be formulated using this concept, for instance: a feedback vertex set of a digraph is a vertex cover of a backedge graph, a feedback arc set is the edge set of a backedge graph, the acyclic number is the maximum independence number of a backedge graph. One can also define the directed pathwidth \cite{pathwidth}, the degreewidth \cite{degreewidth}, the bandwidth \cite{KL}, and the cutwidth \cite{KL} of a digraph through its backedge graphs.

The study of the directed clique number has so far been limited to the case of tournament, and in this paper, we mostly stay within this framework. A digraph is a tournaments if it has exactly one arc between each pair of vertices, or equivalently, it is an orientation of a complete graph. The class of tournaments is already quite rich (see work such as \cite{Alon} and \cite{heroes}). In particular, every ordered graph is a backedge graph of a (unique) tournament, obtained by replacing each edge by a backward arc, and each non-edge by a forward arc.

\medskip

Computing the clique number of a graph is known to be NP-complete \cite{Karp}. However, for a fixed integer $k \ge 1$, one can decide if a graph on $n$ vertices has clique number at most $k$ in time $O_k(n^{k+1})$, by checking all subsets of $k+1$ vertices. In \cite{NSS}, Nguyen, Scott, and Seymour ask about the complexity of the directed clique number. In Section \ref{sec:NP}, we prove that the problem is NP-complete, even when restricted to tournaments.

\begin{restatable}{theorem}{main}\label{thm:NP}
    For $k \ge 3$, the problem of deciding whether a given tournament $T$ verifies $\diomega(T) \le k$ is NP-complete.
\end{restatable}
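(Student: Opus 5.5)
Membership in NP is immediate: a certificate is a total ordering $\prec$ of $V(T)$, and since $k$ is fixed, checking $\ome(T^\prec)\le k$ takes time $O(n^{k+1})$ by enumerating all $(k+1)$-subsets. The substance is NP-hardness. I would first settle $k=3$ by a reduction from a known NP-complete ordering or colouring problem. I would then lift to every $k\ge 3$ by a padding construction: replace the tournament $T$ by a tournament $T_k$ with $\diomega(T_k)=\diomega(T)+(k-3)$.

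For the padding step I would use lexicographic substitution. Given $T$ and a transitive or carefully chosen tournament $S$, form $T'$ by blowing each vertex of $T$ up into a copy of a gadget tournament $G$ with $\diomega(G)=2$. The hoped-for behaviour is that orderings can be assumed to keep blown-up parts contiguous, so that cliques add up. A more robust alternative is the dominating-vertex operation. Adding a new vertex $v$ does not increase $\diomega$ if $v$ is a source, since we place it first. So I would instead add a small tournament $H$ with $\diomega(H)=2$, for example a suitable rotational tournament, arranged so that every vertex of $H$ beats $T$ and $T$ beats parts of $H$ in a structured way. The goal is to force every ordering to create a back-edge clique consisting of a clique of $T^\prec$ plus one extra vertex. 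I expect to need a lemma of the form $\diomega(T\Rightarrow H)=\diomega(T)+\diomega(H)-1$ or similar for a suitable join, and I would try to prove it by taking an optimal ordering and uncrossing.

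For the base case $k=3$ I would reduce from 3-SAT, or from deciding whether a graph is 3-colourable, or possibly from the NP-complete problem ``does a given ordered graph admit a reordering with clique number at most 2'', if such a result is available. I would build a tournament with variable gadgets and clause gadgets. A variable gadget is a tournament with $\diomega=3$ that has exactly two essentially distinct optimal orderings, encoding true and false; it is built from copies of small $3$-$\diomega$-critical tournaments such as the Paley tournament on 7 vertices, so that any deviation yields a back-edge $K_4$. A clause gadget is attached with arcs pointing from literal gadgets, so that if all three literals are in the false orientation, the induced relative positions force a back-edge $K_4$ through the clause. Conversely, a satisfying assignment gives an explicit global ordering, formed by concatenating gadget orderings in a fixed order with the clause vertices inserted appropriately. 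The verification then requires checking that every 4-set spanning several gadgets is not a back-edge clique, which is possible if arcs between different gadgets mostly go forward.

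The main obstacle is the soundness direction of the base reduction. Since $\diomega$ minimises over all orderings, I must show that no exotic ordering interleaving different gadgets achieves clique number 3 when the formula is unsatisfiable. I would first try to establish a ``locality'' lemma: for any ordering with $\ome\le3$, the restriction to each gadget is one of its canonical orderings up to harmless moves. This rigidity would come from the critical tournaments used inside the gadgets. I would then show that interleavings preserve the relative order data that the clause argument uses.
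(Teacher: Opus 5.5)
There is a genuine gap. Neither half of your plan is carried out, and the one concrete statement you propose for the padding step is false. For any two tournaments, $\diomega(T\Ra H)=\max(\diomega(T),\diomega(H))$: order $T$ optimally, then $H$ optimally, and every arc between them is forward. So no join of that kind adds to the clique number. Your alternative description, where every vertex of $H$ beats $T$ while $T$ beats parts of $H$, is self-contradictory. The blow-up idea rests on the hope that orderings can be assumed to keep blocks contiguous, and you give no argument for it.

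What is missing is a way to \emph{force} relative positions in every ordering of small clique number. The paper obtains this from a rigid tournament $T_k$ with two properties: $\diomega(T_k)=k$, and for every bipartition $(X,\overline X)$ of $V(T_k)$ one side still has $\diomega=k$. Lemma~\ref{lem:cut} builds it from many copies of a tournament with $\diomega=k$, chained by $\Ra$, with a carefully chosen set of arcs reversed.

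The forcing works as follows. Suppose $T_k\subseteq N^+(u)\cap N^-(v)$ and some ordering has $v\prec u$. The copy then splits into the part before $u$, which is complete to $u$, and the part after $u$ (hence after $v$), which is complete to $v$. Each part has clique number at most $k-1$, which contradicts rigidity, so $u\prec v$ is forced.

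Padding is then $D\mapsto\Delta(1,D,T_{k+1})$. Ordering $D$, then $T_{k+1}$, then the apex gives $\omega\le\max(\diomega(D)+1,k+1)$. Conversely, $D$ must precede the apex, which is then complete to $D$ in the backedge graph. Hence $\diomega(\Delta(1,D,T_{k+1}))\le k+1$ if and only if $\diomega(D)\le k$. This lift also preserves the optimal orderings of $D$, which is what lets the paper find gadgets by computer at clique number $2$ and lift them to $3$.

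For the base case, you choose the right source problem (3-SAT) but exhibit no gadget: no variable tournament, no clause tournament, no verification. The rigidity you attribute to gadgets built from the Paley tournament on $7$ vertices is unsupported.

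Your soundness strategy also aims at the wrong target. A ``locality lemma'' about canonical restrictions is unnecessary. If $\omega(T^\prec)\le 3$ and a gadget has $\diomega=3$, the restriction of $\prec$ to that gadget is automatically an optimal ordering of it, so the gadget's designed properties apply verbatim.

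The real danger is different: an ordering could place clause vertices before literal vertices. The connecting arcs would then be forward and constrain nothing, and nothing in your plan rules this out. The paper handles this by inserting a copy of $T_3$ with all variable gadgets $\Ra T_3\Ra$ all clause gadgets, which forces $V(A_i)\prec V(B_j)$. With the four arcs from a clause arc $cd$ to a literal arc $ab$ reversed, a backward $cd$ together with a backward $ab$ gives a back-edge $K_4$. So the literal attached to the clause's backward arc must be satisfied, which is exactly the reduction.
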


When $k=1$, the problem can be solved in polynomial time, as a digraph has clique number at most 1 if and only if it is acyclic. In the case $k=2$, it remains open whether the problem is in P or is NP-hard. Recently, Gutowski and Rams proved that the problem is $\Sigma^P_2$-complete when $k$ is given in the input \cite{complexity}.

On the other side, we investigate approximation algorithms in Section \ref{sec:approx}. We assert that for every $k \ge 0$, there exists an integer $f(k) \ge 0$ and a polynomial-time algorithm that distinguishes between tournaments with $\diomega(T) \le k$ and those with $\diomega(T) > f(k)$. In particular, we describe a polynomial-time algorithm that given a tournament $T$ with $\diomega(T) \le 2$, computes an ordering $\prec$ such that $\omega(T^\prec) \le 100$.

\medskip

The relation between the chromatic number and the clique number of a graph has been extensively studied for decades. While clearly $\ome(G) \le \chr(G)$, one cannot upper bound the chromatic number by a function of the clique number in general. In fact, there are several ways to construct a family of graphs with clique number 2 and unbounded chromatic number \cite{survey}. A natural question is: what substructures must appear in such graphs?

We say that a class of graphs $\cal G$ is \emph{$\chi$-bounded} if there is a function $f$ such that $\chi(G) \le f(\omega(G))$ for all $G \in \cal G$, and we say that a graph $H$ is \emph{$\chi$-bounding} if the class of all graphs not containing $H$ as an induced subgraph is $\chi$-bounded. We refer to \cite{survey} for a survey on $\chi$-boundedness.
In 1959, \Erd \cite{girth} proved that for all $k \ge 0$, there exists graphs with chromatic number at least $k$ and no cycle of length at most $k$. This implies that every $\chi$-bounding graph is a forest. \Gya and Sumner independently conjectured that the converse is true.

\begin{conjecture}[\Gya \cite{Gyarfas} \& Sumner \cite{Sumner}]\label{conj:GS}
    Forests are $\chi$-bounding.
\end{conjecture}

In \cite{AACL23}, Aboulker, Aubian, Charbit, and Lopes defined $\dic$-bounded class of tournaments and $\dic$-bounding tournaments using the directed clique number. They show that every $\dic$-bounding tournaments has a backedge graph which is a forest, and conjecture that the converse holds. In Section \ref{sec:GS}, we disprove this conjecture, and give several counter-examples; these examples also contradict Kim's Conjecture 5.5.10 and Conjecture 5.6.1 in \cite{Kim}. On the other hand, we exhibit new $\dic$-bounding tournaments, and prove that every class of tournaments with bounded twin-width is $\dic$-bounded, confirming another conjecture of \cite{AACL23}.

In Section \ref{sec:crit}, we expose two families, found with Aboulker and Charbit, of 3-$\diomega$-critical and 4-$\diomega$-critical tournaments respectively, and we raise the question of perfect tournaments.

\medskip

Lastly, in Section \ref{sec:open}, we present some open questions on the directed clique number.

\section{Definitions and Notations}

We refer the reader to classical textbooks such as \cite{BondyMurty} for any undefined terminology.

In this paper, an ordering of a graph means a total ordering of its vertices.
Given an integer $n \ge 0$, we let $[n]$ denote the set $\{1, \dots, n\}$, and for a set $X$, we denote by $\binom{X}{n}$ the set of all subsets of $X$ of size exactly $n$.

\paragraph{Ordered graphs}
An \emph{ordered graph} is a graph $G$ with an ordering $<_G$. 
Let $G$ and $H$ be two ordered graphs.
We say that $H$ is an induced ordered subgraph of $G$ if there is an injective map $\varphi \colon V(H) \ra V(G)$ such that for all $u,v \in V(H)$, it holds that $u <_H v$ if and only if $\varphi(u) <_G \varphi(v)$, and $\varphi(u)\varphi(v) \in E(G)$ if and only if $uv \in E(H)$.
We say that $G$ is \emph{$H$-free} if $H$ is not an induced ordered subgraph of $G$, and we denote by $\Forb(H)$ the class of all $H$-free ordered graphs.

The chromatic number and the clique number of an ordered graph $G$ are that of the underlying unordered graph.
A class of ordered graphs $\cal G$ is \emph{$\chr$-bounded} if there exists a function $f$ such that $\chr(G) \le f(\ome(G))$ for all $G \in \cal G$, and we say that $H$ is \emph{$\chr$-bounding} if $\Forb(H)$ is $\chr$-bounded.

\paragraph{Digraphs}
Let $D$ be a digraph. If $xy \in A(D)$, we say that $x$ is an in \emph{in-neighbour} of $y$, and $y$ an \emph{out-neighbour} of $x$. Given a vertex $x$, we let $N^+(x)$ and $N^-(x)$ denote respectively the set of out-neighbours and the set in-neighbours of $x$.
For two disjoint sets of vertices $X$ and $Y$, we write $X \Rightarrow Y$ to say that $xy \in A(D)$ for all $x \in X$ and all $y \in Y$, and we write $X \rightarrow Y$ to say that there is no arc from $Y$ to $X$. For simplicity ,when $X = \{x\}$ is a singleton, we write $x \Ra Y$ and $x \ra Y$ in place of $\{x\} \Ra Y$ and $\{x\} \ra Y$ respectively.

We also use the symbol $\Ra$ to denote a composition  operation on digraphs: given two digraphs $D_1$ and $D_2$, we let $D_1\Ra D_2$ denote the digraph obtained from the disjoint union of $D_1$ and $D_2$ by adding all arcs from $V(D_1)$ to $V(D_2)$. Lastly, we denote by $D[X]$ the subdigraph of $D$ induced by $X$.

\paragraph{Backedge graphs}
Let $D$ be a digraph and $\prec$ an ordering of $D$. Given two disjoint subset of vertices $A,B \subseteq V(D)$, we write $A \prec B$ to say that $a \prec b$ for all $a \in A$ and all $b \in B$. An arc $uv \in A(D)$ is called \emph{forward} if $u \prec v$, and \emph{backward} if $v \prec u$. The \emph{backedge graph} $D^{\prec}$ of $D$ with respect to $\prec$ is the undirected ordered graph on the vertex set $V(D)$ with an edge $uv$ whenever there is a backward arc between $u$ and $v$.

\paragraph{Dicolouring and directed clique number}
Let $D$ be a digraph and $k \ge 0$ an integer. A \emph{$k$-dicolouring} of $D$ is a partition of its vertices into $k$ subsets each inducing an acyclic subdigraph. Alternatively, it is a function  $\phi \colon V(D) \rightarrow [k]$ such that $D[\phi^{-1}(c)]$ is acyclic for each colour $c\in [k]$. The \emph{dichromatic number} of $D$, denoted by $\vec{\chi}(D)$, is the least integer $k$ such that $D$ has a $k$-dicolouring. It is folklore that
\begin{equation}\label{eq:dic}
    \dic(D) = \min \big\{ \chr(D^\prec) \colon \prec \text{ ordering of } D \big\}.
\end{equation}
We define the \emph{directed clique number} $\diomega(D)$ of $D$ as :
\[
    \diomega(D) = \min \big\{ \ome(D^\prec) \colon \prec \text{ ordering of } D \big\}.
\]
For a subset of vertices $X \subseteq V(D)$, when $D$ is clear from the context, we simply write $\dic(X)$ and $\diomega(X)$ in place of $\dic(D[X])$ and $\diomega(D[X])$ respectively.
An ordering $\prec$ of $D$ is called an $\diomega$-ordering if $\omega(D^{\prec})= \diomega(D)$, and a $\dic$-ordering if $\chr(D^{\prec}) = \dic(D)$.

\paragraph{Tournaments}
A \emph{tournament} is an orientation of a complete graph, that is, an oriented graph with exactly one arc between every pair of vertices. We say that a tournament is \emph{transitive} it it is acyclic, and we denote by $TT_n$ the unique transitive tournament on $n$ vertices.
Given two tournaments $T$ and $H$, we say that $T$ is \emph{$H$-free} if it $T$ contains no subtournament isomorphic to $H$, and we denote by $\Forb(H)$ the class of all $H$-free tournaments. A class of tournaments $\cal T$ is \emph{$\dic$-bounded} if there is a function $f$ such that $\dic(T) \le f(\diomega(T))$ for every tournament $T \in \cal T$, and we say that a tournament $H$ is \emph{$\dic$-bounding} if $\Forb(H)$ is $\dic$-bounded.

Given three tournaments $T_1, T_2, T_3$, we denote by $\Delta(T_1,T_2,T_3)$ the tournament obtained from  disjoint copies of $T_1, T_2, T_3$ by adding arcs between them so that $T_1 \Ra T_2 \Ra T_3 \Ra T_1$. For the sake of clarity, in this last notation, we may replace the tournament $TT_k$ with the integer $k$; for example, $\Delta(1,k,T)$ stands for $\Delta(TT_1,TT_k,T)$.

\section{Computing the Clique Number of Tournaments is NP-complete}\label{sec:NP}

This section is about computing the clique number of tournaments. Our main result is the following:

\main*

For $k=1$, this problem is in P, and the case $k=2$ remains open. Theorem \ref{thm:NP} directly implies:

\begin{theorem}\label{thm:diomega_NP}
    Computing the clique number of tournaments is NP-hard.
\end{theorem}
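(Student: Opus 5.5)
Theorem~\ref{thm:diomega_NP} is a direct consequence of Theorem~\ref{thm:NP}, so the plan is a reduction from the decision problem to the optimisation problem. Fix $k=3$, for which Theorem~\ref{thm:NP} says that deciding whether $\diomega(T)\le 3$ is NP-complete. Suppose we had a polynomial-time algorithm $\mathcal{A}$ that, given a tournament $T$, outputs $\diomega(T)$. Then, on input $T$, we run $\mathcal{A}$ and answer ``yes'' if and only if the returned value is at most $3$. This is a polynomial-time many-one (indeed Turing, with one oracle call) reduction from an NP-complete problem to the computation of $\diomega$. Hence computing $\diomega$ on tournaments is NP-hard.

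The only points to check are that this is the intended notion of NP-hardness for a function problem and that the output is small. Since $\diomega(T)\le |V(T)|$, the output has $O(\log n)$ bits, so the reduction is polynomial. Equivalently, one can phrase the statement as NP-hardness of the decision version ``given $T$ and $k$, is $\diomega(T)\le k$?''. This version contains the fixed-$k=3$ problem as the special case of instances with $k=3$, and so it is NP-hard by Theorem~\ref{thm:NP}.

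There is no genuine obstacle here; all the difficulty sits in Theorem~\ref{thm:NP} itself, which we are allowed to assume. If one wanted to be careful, the only subtlety is to fix a single value $k\ge 3$ (say $k=3$) rather than quantify over all $k$. A single NP-complete slice already suffices for hardness of the general problem.
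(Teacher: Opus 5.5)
Your proposal is correct and matches the paper, which simply notes that Theorem~\ref{thm:NP} directly implies this statement. Your one-oracle-call reduction from the fixed case $k=3$, which the paper proves explicitly by a reduction from 3-SAT, is exactly that argument.
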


Note that while Theorem~\ref{thm:diomega_NP} is expected, as an analogue of the undirected case, Theorem~\ref{thm:NP} is surprising, in that it contrasts with the undirected case where the corresponding decision problem can be answered in polynomial time. To prove this, we will need intermediate results: this is the purpose of Subsection~\ref{subsec:tools}.

\subsection{Useful tools to increase the clique number}\label{subsec:tools}

The construction in the following proof will be used to assert that Definition~\ref{def:T_k} is well-founded.

\begin{lemma}\label{lem:cut}
    For every tournament $T$, there exists a tournament $T'$ with $\diomega(T') = \diomega(T)$ and such that for all subset of vertices $X \subseteq V(T')$, either $T'[X]$ contains a copy of $T$, or $T'[\overline{X}]$ contains a copy of $T$.
\end{lemma}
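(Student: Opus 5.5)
The plan is to build $T'$ as a composition of copies of $T$ along a transitive pattern. The key fact is that substituting tournaments into the vertices of a transitive tournament does not increase the clique number. Let $n = |V(T)|$. Take $T'$ to be the lexicographic product $T[TT_m]$, or better $TT_m[T]$ (or $T[T]$-like structure), chosen so that any $2$-colouring of its vertices leaves a copy of $T$ in one colour class. I will commit to the following concrete choice: $T'$ consists of $2n-1$ disjoint copies $T_1,\dots,T_{2n-1}$ of $T$, with $T_i \Ra T_j$ for all $i<j$. In other words, $T' = TT_{2n-1}[T]$.

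First I check the clique number. We have $\diomega(T') \ge \diomega(T)$ because $T$ is a subtournament: any ordering of $T'$ restricts to an ordering of $T_1$, and the backedge graph of the restriction is an induced subgraph. For the reverse inequality, order $T'$ by placing $T_1$ first, then $T_2$, and so on, each copy internally ordered by an $\diomega$-ordering of $T$. Every arc between distinct copies goes from $T_i$ to $T_j$ with $i<j$, so it is forward. The backedge graph is therefore the disjoint union of the backedge graphs of the copies, and its clique number is $\diomega(T)$.

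Next I handle the partition property, which is where the construction actually matters. A naive product does not work: if $X$ takes half of each copy, neither side contains a whole copy. So instead I would use an iterated substitution $T' = T[T]$, meaning $T$ with each vertex $v$ replaced by a copy $T_v$ of $T$ and arcs between copies following $T$. Given $X$, either some copy $T_v$ lies entirely in $\overline{X}$, or every copy meets $X$. In the second case, picking one vertex of $X$ from each $T_v$ yields a copy of $T$ inside $X$, since the arcs between copies replicate $T$. In the first case $\overline{X}$ contains $T_v \cong T$. So the partition property holds for $T[T]$.

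It remains to check $\diomega(T[T]) = \diomega(T)$, and I expect this to be the main obstacle, since substituting into a non-transitive outer tournament can increase the clique number: two backward "outer" arcs combine with inner cliques. The lower bound is immediate. For the upper bound, take an $\diomega$-ordering of the outer $T$, replace each vertex by its copy ordered internally by an $\diomega$-ordering, and consider a clique $K$ in the backedge graph. If $K$ meets a copy $T_v$ in two vertices $a,b$, then any other vertex $c\in K$ in a copy $T_w$ with $w\neq v$ has the same arc direction to $a$ and $b$ and lies on the same side of both in the ordering. So both edges $ca$ and $cb$ are backedges exactly when $vw$ is backward. The clique then has size at most $\omega(\text{inner}) + \omega(\text{outer}) - 1$, which can reach $2\diomega(T)-1$, so this bound is not good enough.

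The fallback is to pass to the concrete transitive version with a stronger counting argument. Take $T'$ to be $TT_N[T]$ with copies $T_1 \Ra \dots \Ra T_N$, and seek a partition-robust subfamily. Since $T'$ has clique number $\diomega(T)$, I would instead try $T' = T \Ra T \Ra \dots$ interleaved cleverly, for instance via Ramsey-type selection: choose $N$ large and argue by pigeonhole on which vertices of $T$ land in $X$ across copies. If this fails, I would accept a clique bound through a max over components and revisit the construction so that the outer structure is transitive while the selection still recovers all of $T$.
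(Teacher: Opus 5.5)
There is a genuine gap: your proposal never produces a tournament with both properties. You correctly see that $TT_N[T]$ keeps $\diomega(T)$ but fails the partition property, and that $T[T]$ has the partition property but may have a larger clique number. The latter is a real failure, not an artefact of your bound. For $T$ the directed triangle, $T[T]=\Delta(T,T,T)$ has $\diomega=3$: the first vertex of any ordering is adjacent in the backedge graph to the whole part dominating it, and that part has $\diomega=2$. The fallback cannot work either. If $T$ is strongly connected, every copy of $T$ in $TT_N[T]$ lies inside a single $T_i$, because no arc goes from a later copy to an earlier one. So the set $X$ taking a nonempty proper part of every copy defeats $TT_N[T]$ for every $N$, and no pigeonhole or Ramsey selection over copies can help. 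Some arcs between copies must be reversed so that a copy of $T$ can be assembled from several copies. The whole difficulty is to do this without creating larger cliques, and the proposal contains no mechanism for it.

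The missing idea, as in the paper, is to interpolate between your two attempts. Keep the transitive blow-up, and add a sparse layer of reversed arcs that replicates $T$ only between vertices carrying the same label. Concretely, take $n$ groups of $m=\binom{n(n-1)+1}{n}$ copies of $T$, all placed transitively. Within a group, index the copies by the $n$-subsets of $[n(n-1)+1]$, and label the vertices of each copy bijectively by its subset. Fix an $\diomega$-ordering $v_1,\dots,v_n$ of $T$, and reverse the arc between a vertex of group $i$ and a vertex of group $i'>i$ exactly when they carry the same label and $v_{i'}v_i\in A(T)$.

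Now order the vertices group by group, copy by copy, each copy by an $\diomega$-ordering. The only backedges between distinct copies join equally labelled vertices of distinct groups, and they mirror the backedge graph of $v_1,\dots,v_n$. Since labels inside a copy are distinct, a clique lies either in a single copy or in a same-label transversal, so it has at most $\diomega(T)$ vertices. Conversely, suppose $X$ meets every copy. In each group the labels carried by vertices of $X$ miss at most $n-1$ values, since otherwise the copy indexed by $n$ missed labels would avoid $X$. Hence some label is carried by a vertex of $X$ in every group, and these $n$ vertices induce $T$.
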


\begin{proof}
    Denote $n = |V(T)|$ and $m = {{n(n-1)+1} \choose n}$. Let $T_1, \dots, T_{m \times n}$ be $m \times n$ copies of $T$, and for $i \in [n]$, let $B_i = \{(i-1)m+1, \dots, i m\}$. Note that $(B_i)_{i \in [n]}$ is a partition of $[m \times n]$, thus we can map each $x \in [m \times n]$ to an index $B^{-1}(x) \in [n]$ such that $x \in B_{B^{-1}(x)}$.
    For every $i \in [n]$, let $\varphi_i$ be a bijection from $B_i$ to ${[n(n-1) + 1]} \choose n$, and for each $j \in B_i$, let $\psi_j$ be a bijection from $V(T_j)$ to $\varphi_i(j)$.
    Thus, we associate to every $j \in B_i$ a list of $n$ integers $\varphi_i(j)$, and to each vertex $v \in V(T_j)$ an element in this list.
    Fix a $\diomega$-ordering $v_1, \dots, v_n$ of $T$. Consider the tournament $T'$ obtained from $T_1 \Ra \cdots \Ra T_{m \times n}$ by reversing an arc $uw$ with $u \in T_i$ and $w \in T_j$ if and only if:
    \begin{itemize}
        \item $i < j$,
        \item $\psi_i(u) = \psi_j(w)$, and
        \item $v_{B^{-1}(j)} v_{B^{-1}(i)} \in A(T)$.
    \end{itemize}

    Let us first prove that $\diomega(T') = \diomega(T)$. Let $\prec$ be the ordering of $T'$ such that if $u$ is the copy of vertex $v_i$ in $T_j$ and $w$ is the copy of vertex $v_{i'}$ in $T_{j'}$, then $u \prec w$ if and only if $(j,i)$ is lexicographically smaller than $(j',i')$. Let $K \subseteq V(T')$ be a clique in $T'^{\prec}$.
    If for every $i \in [n]$, $K$ intersects $\bigcup_{j \in B_i} V(T_j)$ on at most one vertex $u_i$, then the set $\{v_i \mid u_i \in K\}$ induces a clique in the fixed $\diomega$-ordering of $T$, and thus $|K| \le \diomega(T)$.
    
    Otherwise $|K \cap \bigcup_{j \in B_i} V(T_j)| \ge 2$ for some $i \in [n]$. Denote $X = K \cap \bigcup_{j \in B_i} V(T_j)$ and let $u,w \in X$ be distinct vertices with $u \in V(T_j)$ and $w \in V(T_k)$. We cannot have $k \ne j$, since there would only forward arcs between $T_i$ and $T_j$ in $T^\prec$. Hence $k = j$, and thus $\psi_j(u) \neq \psi_j(w)$. We cannot have $y \in K \setminus X$ as this implies $\psi_j(u) =\psi_\ell(y) = \psi_j(w)$. It follows that $K \subseteq T_k$. Since $\prec_{\mid V(T_j)}$ is a $\diomega$-ordering of $T_j$, we have $|K| \le \diomega(T)$. Therefore $\diomega(T') \le \diomega(T)$. Clearly $\diomega(T') \ge \diomega(T)$ as $T$ is a subtournament of $T'$. 

    Let us now prove by contradiction that for all $X \subseteq V(T')$, either $T'[X]$ or $T'[\overline{X}]$ contains a copy of $T$. Suppose this is not the case for some set $X \subseteq V(T')$. Then, for every $i \in [m \times n]$, there exists a vertex $u_i \in X \cap T_i$, as otherwise $T_i \subseteq \overline X$.
    For each $j \in [n]$, let $C_j = \{\psi_i(u_i) \mid i \in B_j\}$. If $|\overline C_j| \ge n$, let $C \subseteq \overline C_j$ be a set of size $n$ and let $i = \varphi^{-1}(C)$, then $\psi_i(u_i) \in C \subseteq \overline C_j$, contradicting that $\psi_i(u_i) \in C_j$. Hence $|\overline C_j| \le n-1$ and $|\bigcup_{j \in [n]} \overline C_j| \le n(n-1)$, so there exists $c \in \bigcap_{j \in [n]} C_j$.
    Thus, for every $j \in [n]$, there exists a vertex $w_j \in T_i \cap X$ with $i \in B_j$ such that $\psi_i(w_j) = c$.
    By definition of $T'$,  we have $w_j w_j' \in A(T')$ if and only if $v_j v_j' \in A(T)$, so $T'[\{w_j\}_{j \in [n]}]$ is isomorphic to $T$, yet is included in $X$, a contradiction.
\end{proof}

Using this first lemma, we define the tournament $T_k$ as follows:

\begin{definition}\label{def:T_k}
    Throughout this section, for every interger $k \ge 1$, we denote by $T_k$ a tournament with $\diomega(T_k) = k$ and such that for all subset of vertices $X \subseteq V(T_k)$, either $\diomega(X)=k$ or $\diomega(\overline{X})=k$.
\end{definition}

Note that such a tournament necessarily exists due to Lemma~\ref{lem:cut}, and the fact that there are tournaments with arbitrarily large clique number \cite{AACL23}. The first use of $T_k$, is to force an order on some pairs of vertices.

\begin{lemma}\label{lem:complete}
    Let $k \ge 1$ be an integer, $D$ a digraph, and $u,v \in V(D)$ two vertices such that $N^+(u) \cap N^-(v)$ contains a copy of $T_k$. Then, every ordering $\prec$ of $D$ with $\omega(D^\prec) \le k$ verifies $u \prec v$.
\end{lemma}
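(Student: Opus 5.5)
We argue by contradiction. Suppose $\prec$ is an ordering of $D$ with $\omega(D^\prec) \le k$ but $v \prec u$. Let $Y \subseteq N^+(u) \cap N^-(v)$ induce a copy of $T_k$. The key observation is that $v \prec u$ forces every vertex of $Y$ to be incident to a backward arc with $u$ or with $v$. We partition $Y$ according to the position of each vertex relative to $u$ and $v$, and use Definition~\ref{def:T_k} to find a part that still has clique number $k$. That part, together with $u$ or $v$, will give a clique of size $k+1$ in $D^\prec$.

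Concretely, let $X = \{y \in Y \colon y \prec u\}$ and $\overline{X} = Y \setminus X = \{y \in Y \colon u \prec y\}$; note $y \neq u$ since $y$ is an out-neighbour of $u$. For $y \in X$, the arc $uy$ goes from $u$ to a vertex preceding $u$, so it is backward and $uy \in E(D^\prec)$. For $y \in \overline X$ we have $v \prec u \prec y$, and the arc $yv$ goes from $y$ to the earlier vertex $v$, so it is backward and $vy \in E(D^\prec)$. Thus $u$ is complete to $X$ in $D^\prec$ and $v$ is complete to $\overline X$ in $D^\prec$.

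By Definition~\ref{def:T_k} applied to the copy of $T_k$ on $Y$, either $\diomega(X) = k$ or $\diomega(\overline X) = k$. Say $\diomega(X)=k$. Then the restriction of $\prec$ to $X$ is an ordering of $D[X]$. The backedge graph of this restriction is the induced subgraph $D^\prec[X]$, since whether an arc is backward depends only on the relative order of its ends. Hence $\omega(D^\prec[X]) \ge \diomega(X) = k$, so $D^\prec[X]$ contains a clique $K$ of size $k$. Then $K \cup \{u\}$ is a clique of size $k+1$ in $D^\prec$, contradicting $\omega(D^\prec) \le k$. The case $\diomega(\overline X) = k$ is symmetric, with $v$ in place of $u$.

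The only points needing care are that $u,v \notin Y$ (true since $Y \subseteq N^+(u)\cap N^-(v)$ in a digraph without loops), and that the partition property of $T_k$ transfers to the copy inside $D$ via the isomorphism. There is no real obstacle: the main step is the choice of the split of $Y$ at the position of $u$, rather than at the position of $v$ or by some other cut.
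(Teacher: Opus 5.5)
Your proof is correct and is essentially the paper's argument. Assuming $v \prec u$, you split the copy of $T_k$ at the position of $u$, note that $u$ is complete in $D^\prec$ to the part before it and $v$ is complete to the part after it, and then use the defining property of $T_k$ to get a clique of size $k+1$. The only cosmetic difference is that you restrict to the copy $Y$ from the start and exhibit the clique directly, whereas the paper partitions all of $N^+(u)\cap N^-(v)$ and bounds the directed clique number of each part by $k-1$.
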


\begin{proof}
    Denote $W^\prec = \{w \in N^+(u) \cap N^-(v) \mid w \prec u\}$ and $W^\succ = \{w \in N^+(u) \cap N^-(v) \mid u \prec w\}$. Suppose toward a contradiction that $v \prec u$. Since $u \Ra W$, all vertices of $W^\prec$ are adjacent to $u$ in $D^\prec$, so $\diomega(W^\prec) \le k - 1$. Similarly, using that $v \prec u \prec W^\succ$ and $v \La W$, we have $\diomega(W^\prec) \le k - 1$. This implies that $T_k$ can be partitioned into two set with clique number at most $k-1$, a contradiction.
\end{proof}

We can also use $T_k$ to leverage $\diomega$-orderings of a tournament of a given clique number into $\diomega$-orderings of a tournament with a larger clique number.

\begin{lemma}\label{lem:uplift}
    Let $k \ge 2$ be an integer and $D$ a digraph with $\diomega(D) = k-1$. Then, $\diomega(\Delta(1, D, T_k)) = k$ and 
    \[
        \{\prec \mid \omega(D^\prec) = k-1\} = \{\prec_{\mid V(D)} \mid \omega(\Delta(1, D, T_k)^\prec) = k\}.
    \]
\end{lemma}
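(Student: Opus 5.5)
Write $x$ for the single vertex of $TT_1$ in $\Delta(1,D,T_k)$, so that $x \Ra V(D) \Ra V(T_k) \Ra x$. The plan is to show two things: every ordering $\prec$ with $\omega(\Delta(1,D,T_k)^\prec) \le k$ restricts on $D$ to an ordering with clique number $k-1$, and every ordering of $D$ with clique number $k-1$ extends to an ordering of the whole tournament with clique number exactly $k$. Together these give $\diomega(\Delta(1,D,T_k)) = k$ and the equality of the two sets of orderings. (The lower bound $\ge k$ is immediate, since $T_k$ is a subtournament with $\diomega(T_k)=k$.)

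\textbf{Extension.} Let $\prec_D$ be an ordering of $D$ with $\omega(D^{\prec_D}) = k-1$, and let $\prec_T$ be a $\diomega$-ordering of $T_k$. I would order everything as $x \prec V(D) \prec V(T_k)$, using $\prec_D$ inside $D$ and $\prec_T$ inside $T_k$. With this order, the arcs $x \Ra D$ and $D \Ra T_k$ all go forward. The only backward arcs between parts go from $T_k$ to $x$, so in the backedge graph $x$ is adjacent exactly to $V(T_k)$. A clique therefore lies either inside $D$, giving size at most $k-1$, or inside $T_k \cup \{x\}$. In the latter case we must bound $\omega$ of $T_k$ together with a vertex complete to it. That would give $k+1$, which is too big. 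So I will place $x$ differently: $V(D) \prec V(T_k) \prec x$. Now $D \Ra T_k$ and $T_k \Ra x$ are forward, and $x \Ra D$ is backward, so $x$ is adjacent exactly to $V(D)$. Cliques then have size at most $\max(k,\ (k-1)+1) = k$, and this ordering restricts to $\prec_D$ on $D$.

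\textbf{Restriction (the main step).} Let $\prec$ be any ordering with $\omega \le k$. It suffices to show $\omega(D^{\prec_{|V(D)}}) \le k-1$, since $\ge k-1$ holds automatically. Split $V(T_k)$ into $A = \{w : w \prec x\}$ and $B = \{w : x \prec w\}$. Every vertex of $B$ is joined to $x$ in the backedge graph, because the arc $w x$ goes backward, so $\omega(B) \le k-1$. By the defining property of $T_k$ (Definition~\ref{def:T_k}), it follows that $\diomega(A) = k$. Suppose now that $D$ contains a backedge clique $K$ of size $k$. If some vertex $d \in K$ had $x \prec d$, then the arc $x d$ would be forward, which gives no contradiction directly. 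The cleaner route is to apply Lemma~\ref{lem:complete}-type reasoning: $A$ contains a copy of $T_k$ in this case. So I would first argue that for each $d \in V(D)$, the set $N^+(d) \cap N^-(x) \supseteq V(T_k)$ contains $T_k$, and Lemma~\ref{lem:complete} then gives $d \prec x$ for all $d \in V(D)$. So $V(D) \prec x$, and $x$ is adjacent in the backedge graph to all of $V(D)$. A $k$-clique in $D$ together with $x$ would then be a $(k+1)$-clique, a contradiction. Hence $\omega(D^{\prec_{|V(D)}}) \le k-1$.

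This use of Lemma~\ref{lem:complete} applied to the pair $(d,x)$ is the key step, and it makes the $A/B$ split unnecessary. The main thing to check carefully is that the hypotheses of the lemma match: it needs $N^+(d) \cap N^-(x)$ to contain a copy of $T_k$, and here it contains all of $V(T_k)$.
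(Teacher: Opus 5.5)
Your proof is correct and follows the paper's argument. The extension uses the same ordering $V(D) \prec V(T_k) \prec x$ (your first attempt, with $x$ placed first, can be deleted), and the restriction step applies Lemma~\ref{lem:complete} to each pair $(d,x)$ to get $V(D) \prec x$, so $x$ is complete to $V(D)$ in the backedge graph, exactly as in the paper; the $A/B$ split of $V(T_k)$ is, as you noticed, unnecessary and can be removed.
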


\begin{proof}
    Denote $D' = \Delta(1, D, T_k)$. Clearly $\diomega(D') \ge \diomega(T_k) = k$ since $T_k$ is a subdigraph of $D'$.
    
    Denote $v \in V(D')$ the unique vertex of $D'$ that is neither in $D$ nor in $T_k$. Let $\prec_k$ and $\prec$ be $\diomega$-orderings of $T_k$ and $D$ respectively. Consider the ordering $\prec'$ of $D'$ obtained by concatenating the vertices of $D$ ordered by $\prec$, followed by the vertices of $T_k$ ordered by $\prec_k$, and then the vertex $v$. It holds that $\omega(D'^{\prec'}) = k$ and $\prec'_{\mid V(D)} = \prec$. Therefore $\diomega(D') \le k$ and $\{\prec \mid \omega(D^\prec) = k-1\} \subseteq \{\prec_{\mid V(D)} \mid \omega(D'^\prec) = k\}$.
    
    Let $\prec'$ be an ordering of $D'$ such that $\omega(D'^\prec) = k$. For all $u \in V(D)$, as $N^+(u) \cap N^-(v)$ contains a copy of $T_k$, we have $u \prec' v$ by Lemma~\ref{lem:complete}. Hence $D \prec' v$. Moreover $v \Ra D$, so all vertices of $D$ are adjacent to $v$ in $D'$, thus $\omega(D^{\prec'}_k) = k - 1$. If follows that $\{\prec_{\mid V(D)} \mid \omega(D'^\prec) = k\} \subseteq \{\prec \mid \omega(D^\prec) = k-1\}$.
\end{proof}

\subsection{Gadgets and NP-completeness}

We prove Theorem~\ref{thm:NP} via a reduction from $3$-SAT. We now describe gadgets that will be used in that reduction. We first build a tournament in which one fixed edge is forward if and only if another fixed edge is not. This tournament will encode a variable, and which edge is forward will encode whether this variable is set to true.

\begin{lemma}\label{lem:T_var}
    There exists a tournament $T_{var}$ with $\diomega(T_{var}) = 3$ and two disjoint arcs $uv$ and $wx$ such that:
\begin{itemize}
    \item in every $\diomega$-ordering of $T_{var}$, exactly one of $uv$ or $wx$ is forward,
    \item there exists a $\diomega$-ordering in which $uv$ is forward, and
    \item there exists a $\diomega$-ordering in which $uv$ is backward.
\end{itemize}
\end{lemma}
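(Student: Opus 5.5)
The plan is to build a tournament $D$ with $\diomega(D)=2$ that has the three properties of the statement with respect to $\omega(D^\prec)=2$, and then set $T_{var}=\Delta(1,D,T_3)$. By Lemma~\ref{lem:uplift} with $k=3$, $\diomega(T_{var})=3$. Moreover, the restrictions to $V(D)$ of the $\diomega$-orderings of $T_{var}$ are exactly the orderings $\prec$ of $D$ with $\omega(D^\prec)=2$. So all three bullet points transfer verbatim from $D$ to $T_{var}$, and the whole problem reduces to the case of clique number $2$.

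To build $D$, I would start from four vertices $u,v,w,x$ whose arcs $uv,vw,wx,xu$ form a directed $4$-cycle. Each of the chords $uw$ and $vx$ gets an orientation to be chosen below. Next, I force the two arcs $vw$ and $xu$ to be forward in every ordering with clique number $2$. To do this, I add a copy of $T_2$ inside $N^+(v)\cap N^-(w)$ and another inside $N^+(x)\cap N^-(u)$, and apply Lemma~\ref{lem:complete} with $k=2$. The remaining arcs, between the gadget copies and the rest of $D$, are to be chosen so that they create no unwanted triangles in the intended orderings.

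Since the $4$-cycle cannot be entirely forward, at least one of $uv,wx$ is backward. To exclude the case where both are backward, I would use a triangle in the backedge graph. If $v\prec u$ and $x\prec w$ while $v\prec w$ and $x\prec u$ are forced, then the two backedges $uv$ and $wx$ are disjoint. Orienting a chord alone therefore does not produce a triangle, so I would add an extra vertex $z$ together with further $T_2$-forcing gadgets. These pin $z$ between the relevant vertices so that $z$ forms a backward triangle with $u,v$ or with $w,x$ exactly when both arcs are backward. Alternatively, the chords $uw,vx$ can be forced to lie so that $\{u,v,x\}$ or $\{v,w,x\}$ becomes a back-triangle. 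If this extra forcing is hard to arrange, I would instead encode the exclusive-or through a small $3$-cycle $C_3$ sharing vertices with both arcs.

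For the two existence bullets, I would write down two explicit orderings, one with $u\prec v\prec\cdots$ and $x\prec w$ placed so that $wx$ is backward, and a symmetric one. In each ordering, every gadget copy of $T_2$ is placed between its two forcing vertices, in a $\diomega$-ordering of that copy.

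The main obstacle is checking that these two orderings really have $\omega(D^\prec)=2$. The $T_2$ copies contain backward edges of their own, and every gadget vertex placed between $v$ and $w$ (say) must not close a triangle with another backedge incident to $v$ or $w$. I expect to handle this by orienting all arcs between distinct gadgets, and between a gadget and non-forcing vertices, so that they go forward in both target orderings. Each gadget is placed as a contiguous block, so its backedges stay inside the block, apart from edges to vertices whose position is the same in both orderings. The remaining care is in the case analysis around $z$.
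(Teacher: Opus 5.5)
Your first step is sound and is exactly the paper's: by Lemma~\ref{lem:uplift} with $k=3$, it suffices to find $D$ with $\diomega(D)=2$ having the three properties for orderings with $\omega(D^\prec)=2$, and then take $T_{var}=\Delta(1,D,T_3)$. The gap is that $D$ is never actually built.

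The gadget that excludes ``both $uv$ and $wx$ backward'' is left as three hedged options, and one of them cannot work. The triple $\{u,v,x\}$ contains the pair $x,u$, and $\{v,w,x\}$ contains the pair $v,w$. You have forced both of these arcs to be forward, so neither triple can be a back-triangle.

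More seriously, your plan for certifying the two existence orderings provably fails. With $v\prec w$ and $x\prec u$ forced, the two target orderings must restrict to $x\prec u\prec v\prec w$ and $v\prec w\prec x\prec u$ on the $4$-cycle. A copy $G$ of $T_2$ placed as a block between $v$ and $w$ therefore lies after $u$ in the first ordering and before $u$ in the second. So every arc between $G$ and $u$ is backward in one of them and cannot be ``forward in both''.

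Worse, in a triangle-free backedge graph the neighbourhood of $u$ is stable, hence acyclic. This makes $\{g\in G: g\to u\}$ (neighbours of $u$ in the first ordering) and $\{g\in G: u\to g\}$ (neighbours in the second) both acyclic. They partition $G$, contradicting Definition~\ref{def:T_k}, which is exactly the property that makes Lemma~\ref{lem:complete} work. The gadget forcing $x\prec u$ fails in the same way with respect to $v$.

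So one of your two orderings necessarily contains a triangle. Any repair needs non-contiguous gadget placements and a fresh verification, on top of an XOR gadget you have not found.

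The paper avoids forcing gadgets at this level altogether. It exhibits an explicit $9$-vertex tournament with $\diomega=2$ and two explicit orderings, one with each arc forward. An exhaustive computer check over all $9!$ orderings then confirms that exactly one of $7\to 9$, $8\to 3$ is forward whenever the backedge graph is triangle-free. Only then is Lemma~\ref{lem:uplift} applied. The existence of such a small tournament is the real content of the lemma, and your proposal does not supply it.
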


This proof relies on computer help. Since checking all orderings of a graph is a tedious task, we proceed by first finding such a tournament $T$ with $\diomega(T) = 2$, and then leveraging it using Lemma~\ref{lem:uplift}. 

\begin{proof}
    Let $T$ be the tournament with the following ordered graph as a backedge graph:

    \begin{figure}[H]
        \centering
         \begin{tikzpicture}
             \vertex (8) at (1,0) {$1$};
             \vertex (2) at (2,0) {$2$};
             \vertex (1) at (3,0) {$3$};
             \vertex (5) at (4,0) {$4$};
             \vertex (3) at (5,0) {$5$};
             \vertex (4) at (6,0) {$6$};
             \vertex (0) at (7,0) {$7$};
             \vertex (7) at (8,0) {$8$};
             \vertex (6) at (9,0) {$9$};
    
             \draw[bend right=30] (8) to (0);
             \draw[bend right=30] (8) to (7);
             \draw[bend right=30] (8) to (6);
             \draw[bend right=30] (2) to (3);
             \draw[bend right=30] (2) to (4);
             \draw[bend left=30] (2) to (7);
             \draw[bend left=30] (1) to (7);
             \draw[bend left=30] (1) to (6);
             \draw[bend left=30] (5) to (4);
             \draw[bend left=30] (3) to (6);
             \draw[bend left=30] (4) to (6);
        \end{tikzpicture}
        \caption{The backedge graph corresponding to the ordering $1 \prec 2 \prec 3 \prec 4 \prec 5 \prec 6 \prec 7 \prec 8 \prec 9$}
        \label{fig:not_first}
    \end{figure}
    
    Note that $T$ is not transitive and the ordering $1 \prec 2 \prec 3 \prec 4 \prec 5 \prec 6 \prec 7 \prec 8 \prec 9$ is such that $\omega(T^\prec) = 2$, thus $\diomega(T) = 2$. Let $uv = 7 \rightarrow 9$ and $wx = 8 \rightarrow 3$.
    
    The ordering $1 \prec 2 \prec 3 \prec 4 \prec 5 \prec 6 \prec 7 \prec 8 \prec 9$ is a $\diomega$-ordering in which $uv$ is forward. See Figure~\ref{fig:not_first}.

    The ordering $6 \prec 8 \prec 2 \prec 9 \prec 1 \prec 3 \prec 4 \prec 5 \prec 7$ is a $\diomega$-ordering in which $uv$ is backward. See Figure~\ref{fig:not_second}.

    \begin{figure}[H]
        \centering
        \begin{tikzpicture}
        
             \vertex (4) at (1,0) {$6$};
             \vertex (7) at (2,0) {$8$};
             \vertex (2) at (3,0) {$2$};
             \vertex (6) at (4,0) {$9$};
             \vertex (8) at (5,0) {$1$};
             \vertex (1) at (6,0) {$3$};
             \vertex (5) at (7,0) {$4$};
             \vertex (3) at (8,0) {$5$};
             \vertex (0) at (9,0) {$7$};
    
             \draw[bend right=30] (4) to (6);
             \draw[bend right=30] (4) to (8);
             \draw[bend right=30] (4) to (1);
             \draw[bend right=30] (4) to (3);
             \draw[bend right=30] (7) to (5);
             \draw[bend left=30] (7) to (3);
             \draw[bend left=30] (7) to (0);
             \draw[bend left=30] (2) to (8);
             \draw[bend left=30] (2) to (3);
             \draw[bend left=30] (6) to (5);
             \draw[bend left=30] (6) to (0);
             \draw[bend left=30] (8) to (0);
        \end{tikzpicture}
        \caption{The backedge graph of $T$ corresponding to the ordering $6 \prec 8 \prec 2 \prec 9 \prec 1 \prec 3 \prec 4 \prec 5 \prec 7$}
        \label{fig:not_second}
    \end{figure}
    
    One can check that in all $\diomega$-ordering of $T$, exactly one of $uv$ and $wx$ is forward. See the Python code in Figure~\ref{fig:not_code}.

    \begin{figure}[H]
        \centering
        \begin{python}
import itertools

T = [
  [0,1,1,1,1,1,0,0,0],
  [0,0,1,1,0,0,1,0,1],
  [0,0,0,1,1,1,1,0,0],
  [0,0,0,0,1,0,1,1,1],
  [0,1,0,0,0,1,1,1,0],
  [0,1,0,1,0,0,1,1,0],
  [1,0,0,0,0,0,0,1,1],
  [1,1,1,0,0,0,0,0,1],
  [1,0,1,0,1,1,0,0,0]
]

for P in itertools.permutations(range(9)):
  omega_at_most_two = True
  for i in range(9):
    for j in range(i+1, 9):
      for k in range(j + 1, 9):
        if T[P[j]][P[i]] and T[P[k]][P[j]] and T[P[k]][P[i]]:
          omega_at_most_two = False
  if omega_at_most_two:
    assert((P.index(6) < P.index(8)) != (P.index(7) < P.index(2)))
        \end{python}
        \caption{Python code to check that in every $\diomega$-ordering of $T$, exactly one of $uv$ or $wx$ is forward}
        \label{fig:not_code}
    \end{figure}

    By Lemma~\ref{lem:uplift}, there exists a tournament $T_3$ such that $T_{var} = \Delta(1, T, T_3)$ satisfies the desired property.
\end{proof}

We also need a gadget tournament to encode clauses, In order to do so, we build a tournament in which three fixed arcs are such that at least one of them is always backward. This will encode a disjunction.

\begin{lemma}\label{lem:T_clause}
    There exists a tournament $T_{clause}$ with $\diomega(T_{clause}) = 3$ and three disjoint arcs $uv, wx, yz$ such that:
    \begin{itemize}
        \item for every $\diomega$-ordering of $T_{clause}$, one of $uv, wx, yz$ is backward, and
        \item for every two arcs in $uv, wx, yz$, there exists a $\diomega$-ordering in which these two arcs are forward.
    \end{itemize}
\end{lemma}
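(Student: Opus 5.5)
The plan is to mirror the proof of Lemma~\ref{lem:T_var}. First I will find a small tournament $T$ with $\diomega(T)=2$ carrying three disjoint arcs $uv, wx, yz$ with the two clause properties at level $2$. That is, in every ordering $\prec$ with $\omega(T^\prec)=2$ at least one of the three arcs is backward, and for each pair of the three arcs some ordering with $\omega(T^\prec)=2$ makes both arcs forward. Lemma~\ref{lem:uplift} then lifts this to $T_{clause}=\Delta(1,T,T_3)$. Since the lemma says the restrictions to $V(T)$ of $\diomega$-orderings of $T_{clause}$ are exactly the $\diomega$-orderings of $T$, and the three arcs lie inside $T$, both properties transfer verbatim, and $\diomega(T_{clause})=3$.

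To build $T$ by hand, I would aim for a structure where making all three arcs forward creates a triangle in the backedge graph. A natural candidate is to take the arcs to be ``directed triangles pointing outward''. Concretely, let $T$ contain a directed triangle $a\to b\to c\to a$, and choose $uv$, $wx$, $yz$ so that their forwardness forces $a\prec b$, $b\prec c$, $c\prec a$ respectively. This would be a contradiction, so one arc must be backward.

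To force such precedences, I would use small copies of the gadget from Lemma~\ref{lem:complete} at level $2$. Since $T_2$ can be taken to be a directed triangle (its vertex set cannot be split into two transitive parts), a vertex pair $p,q$ with a directed triangle in $N^+(p)\cap N^-(q)$ forces $p\prec q$ in orderings of clique number $2$. I would try to wire, for instance, $u=a$ and $v$ such that ``$uv$ forward together with the local structure'' yields $a\prec b$. The likely simpler route, as in the paper's previous gadget, is a computer search over tournaments on roughly $9$--$12$ vertices. I would check all orderings, keep those with $\omega(T^\prec)\le2$, and test the two conditions, exhibiting explicit witnessing orderings for the three pairs together with a verification script.

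The main obstacle is ensuring the second property: each pair of arcs can be simultaneously forward while keeping clique number $2$. The rigid forcing gadgets tend to over-constrain the ordering and may rule out the required orderings, or may even push $\diomega(T)$ above $2$. Any hand construction must therefore be checked against the three witness orderings. If that fails, I would fall back on random or exhaustive search among small tournaments with $\diomega=2$, using the Python brute-force check in the style of Figure~\ref{fig:not_code}.
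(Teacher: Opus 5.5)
Your lifting step is correct and matches how the paper finishes: Lemma~\ref{lem:uplift} identifies the $\diomega$-orderings of $T$ with the restrictions of the $\diomega$-orderings of $\Delta(1,T,T_3)$, so both properties transfer. The gap is that you never produce $T$.

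The lemma is an existence statement, and its entire content is the level-$2$ gadget. That means an explicit tournament with three disjoint arcs, together with:
\begin{itemize}
\item an ordering with triangle-free backedge graph certifying $\diomega(T)=2$;
\item one witness ordering for each pair of arcs;
\item a proof or exhaustive check that every ordering with triangle-free backedge graph makes one of the three arcs backward.
\end{itemize}
Describing a search you would run, with a fallback if it fails, does not show that such a $T$ exists. The paper supplies exactly this: an explicit $8$-vertex tournament with the arcs $5\to6$, $2\to4$, $8\to3$, three explicit witness orderings, and a brute-force check over all orderings.

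The hand construction you sketch rests on a false premise: a directed triangle is not a valid $T_2$. It does split into two transitive parts (an arc and a vertex). The proof of Lemma~\ref{lem:complete} uses precisely that the gadget cannot be split into two parts of clique number at most $k-1$, which for $k=2$ means two acyclic parts. Here is a concrete failure. Take $t_1\to t_2\to t_3\to t_1$, with $p\Ra\{t_1,t_2,t_3\}\Ra q$ and $p\to q$. The ordering $t_1\prec t_2\prec q\prec p\prec t_3$ has backedge graph with edges $t_1t_3$, $pt_1$, $pt_2$, $qt_3$, $pq$. This graph is triangle-free, yet $q\prec p$.

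A genuine level-$2$ forcing gadget must be a tournament with $\diomega=2$ and $\dic\ge3$; applying Lemma~\ref{lem:cut} to the directed triangle gives one on $315$ vertices. With such gadgets, your idea of identifying $u=a$ and forcing $v\prec b$ (and cyclically for the other two arcs) would indeed give the first property. You would still have to choose all remaining arcs and prove both $\diomega(T)=2$ and the existence of the three pair-witness orderings. That is the obstacle you flag yourself, and it is left unresolved.
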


Similarly to Lemma~\ref{lem:T_var}, this proof relies on computer help and we proceed by first finding such a tournament with clique number $2$, then leveraging it thanks to Lemma~\ref{lem:uplift}. 

\begin{proof}
    Let $T$ be the tournament with the following ordered graph as a backedge graph.

    \begin{figure}[H]
        \centering
        \begin{tikzpicture}
             \vertex (5) at (1,0) {$1$};
             \vertex (4) at (2,0) {$2$};
             \vertex (0) at (3,0) {$3$};
             \vertex (1) at (4,0) {$4$};
             \vertex (3) at (5,0) {$5$};
             \vertex (7) at (6,0) {$6$};
             \vertex (2) at (7,0) {$7$};
             \vertex (6) at (8,0) {$8$};
    
             \draw[bend right=30] (5) to (7);
             \draw[bend right=30] (5) to (2);
             \draw[bend right=30] (5) to (6);
             \draw[bend right=30] (4) to (3);
             \draw[bend left=30] (4) to (6);
             \draw[bend left=30] (0) to (7);
             \draw[bend left=30] (0) to (6);
             \draw[bend left=30] (1) to (6);
             \draw[bend left=30] (3) to (2);
        \end{tikzpicture}
        \caption{The backedge graph corresponding to the ordering $1 \prec 2 \prec 3 \prec 4 \prec 5 \prec 6 \prec 7 \prec 8$}
        \label{fig:or_first}
    \end{figure}
    
    Note that $T$ is not transitive and the ordering $1 \prec 2 \prec 3 \prec 4 \prec 5 \prec 6 \prec 7 \prec 8$ is such that $\omega(T^\prec) = 2$, thus $\diomega(T) = 2$. Let $uv = 5 \rightarrow 6$, $wx = 2 \rightarrow 4$ and $yz = 8 \rightarrow 3$.

    The ordering $1 \prec 2 \prec 3 \prec 4 \prec 5 \prec 6 \prec 7 \prec 8$ is a $\diomega$-ordering in which $uv$ and $wx$ are forward. See Figure~\ref{fig:or_first}.

    The ordering $4 \prec 7 \prec 5 \prec 8 \prec 1 \prec 2 \prec 6 \prec 3$ is a $\diomega$-ordering in which $uv$ and $yz$ are forward. See Figure~\ref{fig:or_second}.

    \begin{figure}[H]
        \centering
        \begin{tikzpicture}
             \vertex (5) at (5,0) {$1$};
             \vertex (4) at (6,0) {$2$};
             \vertex (0) at (8,0) {$3$};
             \vertex (1) at (1,0) {$4$};
             \vertex (3) at (3,0) {$5$};
             \vertex (7) at (7,0) {$6$};
             \vertex (2) at (2,0) {$7$};
             \vertex (6) at (4,0) {$8$};
    
             \draw[bend left=30] (1) to (6);
             \draw[bend left=30] (1) to (5);
             \draw[bend right=30] (4) to (1);
             \draw[bend right=30] (1) to (0);
             \draw[bend left=30] (2) to (4);
             \draw[bend left=30] (7) to (2);
             \draw[bend left=30] (0) to (2);
             \draw[bend right=30] (5) to (3);
             \draw[bend right=30] (3) to (0);
             \draw[bend left=30] (7) to (6);
             \draw[bend left=30] (7) to (5);
        \end{tikzpicture}
        \caption{The backedge graph of $T$ corresponding to the ordering $4 \prec 7 \prec 5 \prec 8 \prec 1 \prec 2 \prec 6 \prec 3$}
        \label{fig:or_second}
    \end{figure}
    
    The ordering $1 \prec 2 \prec 4 \prec 6 \prec 7 \prec 5 \prec 8 \prec 3$  is a $\diomega$-ordering in which $wx$ and $yz$ are forward. See Figure~\ref{fig:or_third}.

    \begin{figure}[H]
        \centering
        \begin{tikzpicture}
             \vertex (5) at (1,0) {$1$};
             \vertex (4) at (2,0) {$2$};
             \vertex (0) at (8,0) {$3$};
             \vertex (1) at (3,0) {$4$};
             \vertex (3) at (6,0) {$5$};
             \vertex (7) at (4,0) {$6$};
             \vertex (2) at (5,0) {$7$};
             \vertex (6) at (7,0) {$8$};
    
             \draw[bend right=30] (5) to (7);
             \draw[bend right=30] (5) to (2);
             \draw[bend right=30] (5) to (6);
             \draw[bend right=30] (4) to (3);
             \draw[bend left=30] (4) to (6);
             \draw[bend left=30] (7) to (1);
             \draw[bend left=30] (7) to (3);
             \draw[bend left=30] (1) to (6);
             \draw[bend left=30] (1) to (0);
             \draw[bend left=30] (2) to (0);
             \draw[bend left=30] (3) to (0);
        \end{tikzpicture}
        \caption{The backedge graph of $T$ corresponding to the ordering $1 \prec 2 \prec 4 \prec 6 \prec 7 \prec 5 \prec 8 \prec 3$}
        \label{fig:or_third}
    \end{figure}

    One can check that in all $\diomega$-ordering of $T$, one of $uv$, $wx$, $yz$ is backward.
    See the Python code in Figure~\ref{fig:or_code}.

    \begin{figure}[H]
        \centering
        \begin{python}
import itertools

T = [
  [0,1,1,1,1,0,0,0],
  [0,0,1,1,0,1,1,0],
  [0,0,0,1,1,0,1,0],
  [0,0,0,0,1,1,1,0],
  [0,1,0,0,0,1,0,1],
  [1,0,1,0,0,0,1,1],
  [1,0,0,0,1,0,0,1],
  [1,1,1,1,0,0,0,0],
]

for P in itertools.permutations(range(8)):
  omega_at_most_two = True
  for i in range(8):
    for j in range(i+1, 8):
      for k in range(j+1, 8):
        if T[P[j]][P[i]] and T[P[k]][P[j]] and T[P[k]][P[i]]:
          omega_at_most_two = False
  if omega_at_most_two:
    assert(P.index(5) < P.index(4) or P.index(3) < P.index(1) or P.index(2) < P.index(7))
        \end{python}
        \caption{Python code to check that in every $\diomega$-ordering of $T$, one of $uv, wx, yz$ is backward}
        \label{fig:or_code}
    \end{figure}

    By Lemma~\ref{lem:uplift}, there exists a tournament $T_3$ such that $T_{clause} = \Delta(1, T, T_3)$ satisfies the desired property.
\end{proof}

We now make the reduction from 3-SAT.

\begin{theorem}
    The problem of deciding whether a given tournament $T$ verifies $\diomega(T) \le 3$ is NP-complete.
\end{theorem}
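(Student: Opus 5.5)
Membership in NP is immediate: an ordering $\prec$ is a certificate, and $\omega(T^\prec)\le 3$ can be checked in time $O(n^4)$ by testing all $4$-subsets. For hardness we reduce from 3-SAT. Given a formula with variables $x_1,\dots,x_n$ and clauses $C_1,\dots,C_m$, we take one copy $V_i$ of $T_{var}$ (Lemma~\ref{lem:T_var}) per variable, with distinguished arcs $u_iv_i$ and $w_ix_i$. The intended semantics is that $x_i$ is true iff $u_iv_i$ is forward, which by Lemma~\ref{lem:T_var} holds iff $w_ix_i$ is backward. We also take one copy $C_j$ of $T_{clause}$ (Lemma~\ref{lem:T_clause}) per clause, and we must link the three distinguished arcs of $C_j$ to the literals.

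The linking is done by identification. For a clause containing literal $x_i$ (resp.\ $\neg x_i$), we identify one distinguished arc of $C_j$ with a fresh copy of the arc $w_ix_i$ (resp.\ $u_iv_i$). This arc is forward exactly when the literal is false, so the clause gadget forbids all three literals being false. Since a literal arc may be used by several clauses, and identification of vertices across gadgets is awkward in a tournament, I would avoid sharing vertices. Instead I would make copies of arcs whose direction is forced to agree, using Lemma~\ref{lem:complete}. Concretely, for a fresh arc $ab$ meant to copy $wx$, insert copies of $T_3$ in $N^+(a)\cap N^-(w)$ and in $N^+(w)\cap N^-(a)$. The resulting two-way constraint would make $a,w$ ``adjacent'' in every good ordering, which is impossible, so I would instead use the following variant.

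Each vertex $p$ of a variable gadget gets a \emph{twin} $p'$ with copies of $T_3$ placed in $N^+(p)\cap N^-(q')$ and $N^+(p')\cap N^-(q)$ for all appropriate $q$. The aim is for $p$ and $p'$ to sit consecutively relative to the other gadget vertices. The main tool is Lemma~\ref{lem:complete}, which forces $a\prec b$ whenever $N^+(a)\cap N^-(b)$ contains $T_3$. Choosing such forcings systematically, I can orient the arcs between different gadgets and all the auxiliary $T_3$'s so that the global tournament $T$ is an ``ordered sum'' of gadgets: gadgets placed in a fixed order $V_1,\dots,V_n,C_1,\dots,C_m$ with all arcs between distinct gadgets forward. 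The only exceptions are the forcing $T_3$ copies, which are attached so that each is fully forward except at its two endpoints. The completeness direction then needs a good ordering. Take an $\omega$-ordering of each variable gadget matching the assignment, and for each clause an $\omega$-ordering making the arcs of the true literals backward, which Lemma~\ref{lem:T_clause} provides for any two forward arcs. Concatenate these and place each forcing $T_3$ block suitably, and check that no $K_4$ appears in the backedge graph. Since inter-gadget backedges form only stars through the endpoints of the $T_3$ blocks, any clique must lie essentially in one gadget or in one $T_3$ plus one endpoint. The bound $\omega\le 3$ there uses that a $T_3$ next to an endpoint sees only $\omega\le 2$ worth of its neighbours; if needed I would replace $T_3$ by $\Delta(1,\cdot,\cdot)$-style blocks via Lemma~\ref{lem:uplift}. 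Soundness is the easier direction. Any ordering with $\omega\le3$ restricts to $\omega$-orderings of each gadget, since each has $\diomega=3$. The forcings transmit arc directions, Lemma~\ref{lem:T_var} yields a consistent truth value for each variable, and Lemma~\ref{lem:T_clause} yields a true literal in each clause.

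The main obstacle is the equality-propagation step between a literal arc in $V_i$ and its copy in $C_j$. Lemma~\ref{lem:complete} only enforces strict orders $a\prec b$, never an equivalence of the form ``$a\prec b$ iff $c\prec d$''. I would first try the simplest construction: identify the clause arc's endpoints directly with the vertices $w_i,x_i$ (or $u_i,v_i$), with all other arcs between different gadgets forward in the gadget order. Gadget vertices are shared, but each gadget still induces its own tournament. The risk is that a $K_4$ may then form across two gadgets sharing those two vertices. I would verify, possibly with computer help as in Lemmas~\ref{lem:T_var} and~\ref{lem:T_clause}, that only $\omega$-orderings keeping each gadget essentially contiguous around the shared pair are needed for completeness.
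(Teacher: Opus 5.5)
\textbf{There is a genuine gap, at exactly the step you flag.} The link between a literal arc of a variable gadget and a distinguished arc of a clause gadget is never actually constructed.

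Lemma~\ref{lem:complete} only produces unconditional relations $a\prec b$, so your twin construction cannot work as described. The relative order of $p$ and $q$ inside a copy of $T_{var}$ is exactly what differs between its two intended $\diomega$-orderings. No family of fixed constraints such as $p\prec q'$, $p'\prec q$ can make $p'$ track it.

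The vertex-identification fallback is not an argument either. Gadgets that share vertices receive conflicting prescriptions for the arcs between those vertices: one from $T_{clause}$, one from your ``forward in gadget order'' rule. They also cannot all be kept contiguous. A computer check is not available for a formula-dependent construction containing copies of $T_3$.

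Even a perfect equality link would break your completeness direction. A clause with two or three true literals would then need a $\diomega$-ordering of $T_{clause}$ with two or three backward distinguished arcs. Lemma~\ref{lem:T_clause} only supplies orderings with exactly one backward distinguished arc.

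\textbf{The missing idea: no equality propagation is needed.} It suffices to enforce the one-sided implication ``clause arc backward $\Rightarrow$ literal true''. This is easy using backedges forced by a single copy of $T_3$.
\begin{itemize}
\item Take $T=V_1\Ra\cdots\Ra V_n\Ra T_3\Ra C_1\Ra\cdots\Ra C_m$. By Lemma~\ref{lem:complete}, every ordering with $\omega\le 3$ places all variable-gadget vertices before all clause-gadget vertices.
\item For a literal of clause $j$ on variable $i$, let $ab$ be $u_iv_i$ if the literal is positive and $w_ix_i$ if it is negative. Let $cd$ be the corresponding distinguished arc of $C_j$.
\item Reverse the four arcs between $\{a,b\}$ and $\{c,d\}$, so that $\{c,d\}\Ra\{a,b\}$. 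These four arcs are then backward in every good ordering. Hence $ab$ and $cd$ cannot both be backward without creating a $K_4$.
\end{itemize}

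\emph{Soundness.} Since $T_{clause}$ forces some $cd$ backward, the corresponding $ab$ is forward, i.e.\ that literal is true.

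\emph{Completeness.} Order the blocks as $V_1\prec\cdots\prec V_n\prec T_3\prec C_1\prec\cdots\prec C_m$, each according to a suitable $\diomega$-ordering. For each clause gadget, choose the ordering whose unique backward distinguished arc belongs to a true literal; Lemma~\ref{lem:T_clause} provides this by taking the other two arcs forward. Now check for a $K_4$:
\begin{itemize}
\item The only backedges between distinct blocks are the reversed arcs, so a $K_4$ cannot meet $T_3$.
\item It cannot lie inside a single gadget, and it cannot meet two variable gadgets or two clause gadgets.
\item Assume no variable occurs twice in a clause. Then every vertex of $V_i$ has at most two in-neighbours in $C_j$, and every vertex of $C_j$ at most two out-neighbours in $V_i$.
\end{itemize}
So a $K_4$ would have to be a linked set $\{a,b,c,d\}$ with both $ab$ and $cd$ backward. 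Your choice of orderings excludes this, since the backward $cd$ belongs to a true literal and so $ab$ is forward.
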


\begin{proof}
    Clearly, this problem is in NP. Let us prove it is NP-hard by reducing $3$-SAT to this problem.
    
    Let $\varphi$ be a $3$-SAT formula on variables $v_i$ for $i \in \intv{1}{n}$, and with clauses $C_j = C_{j,1} \lor C_{j,2} \lor C_{j,3}$ where $C_{j,k}$ are literals.
    Without loss of generality, we can suppose no variable appears more than once in the literals of a clause.
    For every variable $v_i$, we create a copy $A_i$ of $T_{var}$ (as defined in Lemma~\ref{lem:T_var}). Let $f^+_i$ and $f^-_i$ be the two arcs corresponding to $uv$ and $wx$ in $A_i$.
    For every clause $C_j$, we create a copy $B_j$ of $T_{clause}$ (as defined in Lemma~\ref{lem:T_clause}). Let $e^1_{j}, e^2_{j}, e^{3}_j$ be the three arcs corresponding to $uv, wx, yz$ in $B_j$.

    We consider the tournament $T = A_1 \Ra \cdots \Ra A_n \Ra T_3 \Ra B_1 \Ra \cdots \Ra B_m$, where $T_3$ is the tournament defined in Definition~\ref{def:T_k}, in which we revert some arcs as follows: for every literal $C_{j,k}$ corresponding to the variable $v_i$, let $ab = f^+_i$ if it is a positive literal and $ab = f^-_i$ otherwise, and let $cd = e^{k}_{j}$. We revert the arcs $ac, ad, bc$ and $bd$, so that after doing so, we have $ca, da, cb, db \in A(T)$.
    Let us prove that $\diomega(T) = 3$ if and only if $\varphi$ is satisfiable.
    \begin{figure}[h]
        \centering
        \begin{tikzpicture}[
            bullet/.style={circle, fill, inner sep=1pt},
            rect/.style={rounded corners}
            ]
            \foreach \x in {1,3.5,5.5,7,8.5,11}
                \node at (\x+.25,.5) {$\Ra$} ;
            \foreach \x in {1,3.5,8.5,11}
                \node at (\x+.75,.5) {$\cdots$} ;

            \draw[rect, blue, fill=blue!20] (0,0) rectangle (1,1) ;
            \draw[rect, blue, fill=blue!20] (2,0) rectangle (3.5,1) ;
            \draw[rect, blue, fill=blue!20] (4.5,0) rectangle (5.5,1) ;
            \draw[rect, fill=gray!20] (6,0) rectangle (7,1) ;
            \draw[rect, Green, fill=Green!20] (7.5,0) rectangle (8.5,1) ;
            \draw[rect, Green, fill=Green!20] (9.5,0) rectangle (11,1) ;
            \draw[rect, Green, fill=Green!20] (12,0) rectangle (13,1) ;

            \node at (.5,.5) {\color{Blue}$A_1$} ;
            \node at (2.75,-.3) {\color{Blue}$A_i$} ;
            \node at (5,.5) {\color{Blue}$A_n$} ;
            \node at (6.5,.5) {$T_3$} ;
            \node at (8,.5) {\color{Green}$B_1$} ;
            \node at (10.25,-.3) {\color{Green}$B_j$} ;
            \node at (12.5,.5) {\color{Green}$B_m$} ;

            \vertex (a) at (2.2,.6) {$a$} ;
            \vertex (b) at (3.3,.6) {$b$} ;
            \vertex (c) at (9.7,.6) {$c$} ;
            \vertex (d) at (10.8,.6) {$d$} ;
            \node (e) at (10.25,.3) {$e_j^k$} ;
            \node (f) at (2.75,.3) {$f_i^+$} ;
            
            \draw[-latex] (a) to (b) ;
            \draw[-latex] (c) to (d) ;
            \draw[-latex, Red] (c) to[bend right=40] (a) ;
            \draw[-latex, Red] (c) to[bend right] (b) ;
            \draw[-latex, Red] (d) to[bend right=50] (a) ;
            \draw[-latex, Red] (d) to[bend right=40] (b) ;
        \end{tikzpicture}
        \caption{Construction of the tournament $T$ (when the literal $C_{j,k}$ is a positive instance of the variable $v_i$)}
    \end{figure}

    First, suppose $\diomega(T) = 3$ and let $\prec$ be a $\diomega$-ordering of $T$.
    Note that by Lemma~\ref{lem:complete}, for all $i \in \intv{1}{n}$ and $j \in \intv{1}{m}$, we have $V(A_i) \prec V(B_j)$.
    Let $\nu$ be the assignment such that $\nu(v_i)$ if and only if the arc $f^+_i$ is forward, or equivalently, if and only if the arc $f^-_i$ is backward.
    We will prove that $\nu$ satisfies $\varphi$.
    
    Let $C_j$ be a clause.
    By Lemma~\ref{lem:T_clause}, there exists $k \in \{1,2,3\}$ such that $e^k_{j} = cd$ is backward.
    Let $ab = f^+_i$ if $C^k_j$ is a positive literal and $ab = f^-_i$ otherwise.
    Then, $ab$ must be forward, for otherwise $\{a,b,c,d\}$ would induce a $K_4$ in $T^\prec$ since $ca, da, cb$ and $db$ are backward.
    Thus the literal $C^k_j$ is satisfied. Hence every clause is satisfied by $\nu$.

    \medskip

    Now, suppose $\varphi$ is satisfiable. Let $\nu$ be an assignment satisfying $\varphi$, and for each $j \in \intv{1}{m}$, let $k_j \in \{1,2,3\}$ be such that $C_{j,k_{j}}$ is satisfied. Consider an ordering $\prec$ of $T$ such that:
    \begin{itemize}
        \item $A_1 \prec \cdots \prec A_n \prec T_3 \prec B_1 \prec \cdots \prec B_m$,
        \item for each $i \in \intv{1}{n}$, the vertices of $A_i$ are ordered following a $\diomega$-ordering of $A_i$ in which $f^+_1$ is forward if and only if $\nu(v_1)$,
        \item the vertices of $T_3$ are ordered following a $\diomega$-ordering of $T_3$, and
        \item for each $j \in \intv{1}{m}$, the vertices of $B_j$ are ordered following a $\diomega$-ordering of $B_j$ in which, for $k \in \{1,2,3\}$, the arc $e^k_1$ is backward if and only if $k = k_j$.
    \end{itemize}

    Suppose there exists $K \subset V(T)$ such that $T^\prec[K] = K_4$. Note that $K$ cannot intersect $V(T_3)$, since $\omega(T^\prec_3) = 3$ and there are no edges between $V(T_3)$ and $V(T) \setminus V(T_3)$ in $T^\prec$.
    For $i,i' \in \intv{1}{n}$ distinct, $K$ cannot intersect both $V(A_i)$ and $V(A_{i'})$, as there are no edges between $A_i$ and $A_{i'}$ in $T^\prec$.
    For the same reason, $K$ cannot intersect both $V(B_j)$ and $V(B_{j'})$ when $j \neq j'$.
    Thus, $K \subset V(A_i) \cup V(B_j)$ for some integers $i \in \intv{1}{n}$ and $j \in \intv{1}{m}$.
    Since $\omega(A^\prec_i) = 3$, $K$ intersects $B_j$, and since $\omega(B^\prec_j) = 3$, $K$ intersects $A_i$.
    Vertices of $A_i$ have at most 2 in-neighbours in $B_j$ and $A_i \prec B_j$,  so $K$ intersects $B_j$ on at most $2$ vertices.
    Likewise, vertices of $B_j$ have at most $2$ out-neighbours in $A_i$ and $A_i \prec B_j$, so $K$ intersects $A_i$ on at most $2$ vertices.
    Thus $K = \{u,v,w,x\}$ with $v \prec u$ two distinct vertices of $A_i$ and $x \prec w$ two distinct vertices of $B_j$.
    This is only possible if there exists $k \in \{1,2,3\}$ such that $wx = e^k_j$ and $uv = f^+_i$ if $C^k_j$ is a positive literal and $uv = f^-_i$ otherwise.
    The fact that $e^k_j$ is backward implies $k = k_j$, but then $uv$ must be forward by construction, a contradiction. Therefore $\omega(T^\prec) < 4$.
\end{proof}

This proves Theorem~\ref{thm:NP} in the case $k=3$. Using Lemma~\ref{lem:uplift}, we can extend this to all larger of values $k$.

\section{Approximation Algorithms}\label{sec:approx}

Let $A_1$ and $D_1$ denote one-vertex tournaments, and for $n \ge 1$, let $D_{n+1} = \Delta(1,D_n,D_n)$, and denote $A_{n+1}$ the tournament made from $n$ copies $T_1, \dots, T_n$ of $A_n$ and $n+1$ additional vertices $v_0, \dots, v_n$ such that for $0 \le i < j \le n$, we have $v_i \La v_j$ and $v_i \Ra T_j$, and for $1 \le i < j \le n$, we have $T_i \Ra v_j$ and $T_i \Ra T_j$. These two families of tournaments were introduced by Kim $\cite{Kim}$ who showed that $\diomega(A_n) \ra \infty$ and $\diomega(D_n) \ra \infty$ as $n$ goes to infinity. Recently, Crew, Fan, Koerts, Moore, and Spirkl \cite{Spirkl} proved that for all $n\ge0$, every tournament with large enough directed clique number contains a subtournament isomorphic to $A_n$ or $D_n$.

\begin{theorem}[Crew, Fan, Koerts, Moore, \& Spirkl \cite{Spirkl}]\label{thrm:An_Dn}
    For every integer $n \ge 1$, there exists an integer $c_n \ge 0$ such that every tournament $T$ with $\diomega(T) \ge c_n$ contains a subtournament isomorphic to $A_n$ or $D_n$.
\end{theorem}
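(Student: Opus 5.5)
Since this result is quoted from \cite{Spirkl}, I only sketch how I would attempt a proof: by induction on $n$, with the two families handled by two different mechanisms.

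\textbf{Mechanism for $D_n$.} I would prove an "almost-$\Delta$" lemma. Suppose $T$ contains a vertex $v$ and disjoint sets $X \subseteq N^+(v)$, $Y \subseteq N^-(v)$ with $X \Ra Y$ and $\diomega(X), \diomega(Y) \ge c_{n-1}$. By induction each of $X$ and $Y$ contains $A_{n-1}$ or $D_{n-1}$, and $T[\{v\}\cup X\cup Y] = \Delta(1,T[X],T[Y])$.

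\textbf{Mechanism for $A_n$.} A chain $v_0, \dots, v_{n-1}$ together with sets $S_1,\dots,S_{n-1}$ in the pattern defining $A_n$ (each $S_i$ of large clique number) gives $A_n$ once each $S_i$ is known to contain $A_{n-1}$.

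One difficulty is that the induction mixes the families: $X$ may yield $A_{n-1}$ where $D_{n-1}$ is needed. I would therefore strengthen the hypothesis to "$\diomega(T)\ge c_n$ implies $T$ contains $A_n$, or $T$ contains $D_m$ for a suitable $m$". I would then use that $A_{n-1}$ and $D_{n-1}$ both embed in a common small tournament $H_{n-1}$, and run the induction on $H_{n-1}$-freeness instead.

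\textbf{Main step: producing the complete pair $X \Ra Y$.} Assume $T$ contains neither structure and $\diomega(T)$ is huge. I would build an ordering greedily, in the spirit of the proof of Lemma~\ref{lem:uplift}. Pick a vertex $v$ and place $N^+(v)$ before $v$ and $N^-(v)$ after $v$, recursing on both sides. A $K_{c}$ in the resulting backedge graph then has to cross $v$, which forces many back-edges from $N^-(v)$ to $N^+(v)$.

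\begin{itemize}
\item If $\diomega(N^-(v)\cup N^+(v))$ is essentially realised by the cross edges, I would use a Ramsey-type cleaning on the bipartite back-edge graph between $N^+(v)$ and $N^-(v)$. The goal is two subsets, each of large clique number, that are completely oriented $X \Ra Y$, giving $D_n$ through the first mechanism.
\item Otherwise, the recursion nests: the vertices $v$ chosen at successive depths form a chain $v_0, v_1, \dots$ whose "dominated blocks" have the $A_n$ pattern. Those blocks have large clique number by a pigeonhole over depths, giving $A_n$ through the second mechanism.
\end{itemize}

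\textbf{Expected obstacle.} The hardest step is the cleaning in the first bullet. Clique number is not monotone under random sampling, and a set of large $\diomega$ need not have a large $\diomega$ subset completely joined to another such set. To get around this I would use the robustness trick of Lemma~\ref{lem:cut}: replace "large $\diomega$" by "contains many disjoint copies of $H_{n-1}$ in a Lemma~\ref{lem:cut}-style rigid arrangement". That property survives passing to most vertices, so a density-increment or Kővári–Sós–Turán-type argument on the copies themselves, rather than on vertices, could extract the complete pair. I expect the quantitative dependence $c_n$ to be tower-type.
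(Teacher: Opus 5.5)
The paper gives no proof of this theorem; it is quoted from \cite{Spirkl}. Judged on its own, your text is a plan rather than a proof. Both ``mechanisms'' are immediate once the structure is in hand, and the steps meant to find the structure are either missing or do not work.

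The type-mixing fix is the first concrete failure. Embedding $A_{n-1}$ and $D_{n-1}$ in a common $H_{n-1}$ goes the wrong way: the induction hypothesis only ever hands you one of $A_{n-1}$, $D_{n-1}$, and the strengthening ``large $\diomega$ forces $H_{n-1}$'' is false. Indeed $\diomega(D_m)\to\infty$, but $D_m$ arises from one vertex by repeated substitution into directed triangles, so every prime subtournament of $D_m$ has at most three vertices. In $A_3$, by contrast (with $T_i\Ra v_i$, so that $A_2$ is the directed triangle), the vertices $v_0,v_1,v_2$ together with one vertex from each copy of $A_2$ induce a prime tournament. Hence no $D_m$ contains $A_{n-1}$ once $n\ge 4$.

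The mixing problem is removed instead by a two-parameter induction. Show that $\diomega(T)\ge c(a,d)$ forces $A_a$ or $D_d$, by induction on $a+d$. In a tournament containing neither, a subset with $\diomega\ge c(a,d-1)$ must contain $D_{d-1}$, and a subset with $\diomega\ge c(a-1,d)$ must contain $A_{a-1}$. That is exactly what your two mechanisms consume.

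The real gap is the main step, which is the whole content of the theorem. With your ordering $N^+(v)\prec v\prec N^-(v)$, the vertex $v$ is adjacent to every other vertex of the backedge graph. The crossing backedges are then arcs from $N^-(v)$ to $N^+(v)$, the reverse of the pattern $X\Ra Y$ ($X\subseteq N^+(v)$, $Y\subseteq N^-(v)$) that your first mechanism needs.

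With the natural ordering $N^-(v)\prec v\prec N^+(v)$ the orientation is right, but it still does not help. A large crossing clique $K_1\cup K_2$ consists of two cliques of a backedge graph, i.e.\ two transitive sets with $K_2\Ra K_1$. Together with $v$ they form $\Delta(1,TT_b,TT_a)$, whose directed clique number is at most $2$. So a large crossing clique says nothing about subsets of large $\diomega$. Ramsey-type cleaning controls the sizes of the sets it produces, not their $\diomega$. What is missing is an argument that turns ``every ordering has a large clique'' into a complete pair of sets of large $\diomega$ (or into the $A_n$ pattern), and none is given.

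The second bullet fails for a structural reason. In a nested recursion each chosen vertex sees everything deeper on one side. In $A_n$, however, for $i<j$ we have $v_j\in N^-(v_i)$ but $T_j\subseteq N^+(v_i)$. So the later vertices and the later blocks sit in different branches at $v_i$, and the arcs between branches (for instance $v_{i+1}\Ra T_j$) are not controlled by the recursion.

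Finally, the workaround in your last paragraph again relies on $H_{n-1}$. Moreover, the robustness in Lemma~\ref{lem:cut} concerns bipartitions of a single vertex set (one side always keeps a copy), and this does not produce complete pairs.
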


This means that a class of tournaments has bounded directed clique number if and only if it excludes members of both families as subtournaments. In particular, if a tournaments has very large directed clique number, then it contains a subtournament of bounded size with large directed clique number.

\begin{corollary}[Crew et al. \cite{Spirkl}]
    For every integer $k \ge 0$, there exists two integers $f(k) \ge 0$ and $\ell(k) \ge 0$ such that every tournament $T$ with $\diomega(T) \ge f(k)$ contains a subtournament $X$ verifying $\diomega(X) \ge k$ and $|X| \le \ell(k)$.
\end{corollary}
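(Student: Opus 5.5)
The corollary follows from Theorem~\ref{thrm:An_Dn} once we observe that the two witness families $(A_n)$ and $(D_n)$ have unbounded directed clique number and fixed size. First, since Kim showed $\diomega(A_n) \to \infty$ and $\diomega(D_n) \to \infty$, for each $k$ there is an index $n_k$ with $\diomega(A_{n_k}) \ge k$ and $\diomega(D_{n_k}) \ge k$. I would take $n_k$ to be the least index beyond which both sequences stay at least $k$; a simpler choice is also available, since $\diomega$ is monotone under taking subtournaments and $A_n \subseteq A_{n+1}$, $D_n \subseteq D_{n+1}$ (each contains a copy of its predecessor), so both sequences are nondecreasing and it suffices that each eventually exceeds $k$.

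Second, set $f(k) = c_{n_k}$ from Theorem~\ref{thrm:An_Dn}. Set $\ell(k) = \max(|V(A_{n_k})|, |V(D_{n_k})|)$, which is finite since these are fixed tournaments. Explicitly, $|D_{n+1}| = 2|D_n|+1$ and $|A_{n+1}| = n|A_n| + n+1$, though the exact values are irrelevant.

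Third, let $T$ satisfy $\diomega(T) \ge f(k)$. Theorem~\ref{thrm:An_Dn} gives a subtournament $X$ isomorphic to $A_{n_k}$ or $D_{n_k}$. Then $|X| \le \ell(k)$, and $\diomega(X) \ge k$ by the choice of $n_k$.

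The only point needing care is the choice of $n_k$. One must not assume monotonicity of $\diomega(A_n)$ without justification, but divergence of each sequence to infinity is enough to pick a single index that works for both. So there is no real obstacle.
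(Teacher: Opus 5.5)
Your proposal is correct and matches the paper's argument: the paper derives this corollary directly from Theorem~\ref{thrm:An_Dn} and Kim's result that $\diomega(A_n)$ and $\diomega(D_n)$ tend to infinity, writing out the same reasoning in the proof of Corollary~\ref{coro:approx} (choose $N(k)$ with $\diomega(A_{N(k)}),\diomega(D_{N(k)})\ge k$, take $f(k)=c_{N(k)}$ and $\ell(k)=\max\{|V(A_{N(k)})|,|V(D_{N(k)})|\}$). Your monotonicity remark is harmless but unnecessary, since divergence of both sequences already gives a common index.
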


By the way, we note that such a statement cannot hold for digraphs in general.

\begin{proposition}
    For every two integers $k,\ell \ge 0$, there exists a digraph $D$ with $\diomega(D) \ge k$ and that contains no directed cycle of length at most $\ell$.
\end{proposition}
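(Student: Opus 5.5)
Take a digraph $G$ with dichromatic number at least $k$ and no directed cycle of length at most $\ell$, and note that $\diomega(G) \ge k$. Such digraphs are classical: Bokal, Fijavž, Juvan, Kayll and Mohar showed that there are digraphs of arbitrarily large dichromatic number and digirth. If we want a self-contained argument, we can follow Erd\H{o}s' probabilistic method \cite{girth} in the directed setting. Take a random tournament on $n$ vertices, or a random orientation of $G(n,p)$ with $p = n^{-1+\varepsilon}$ and $\varepsilon < 1/\ell$. The expected number of directed cycles of length at most $\ell$ is $O(n^{\ell\varepsilon}) = o(n)$, so we delete one vertex from each such cycle. A first-moment computation shows that with high probability every set of size $n/(2k)$ spans a directed cycle. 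Indeed, in the random orientation a set $S$ is acyclic only if its induced digraph has a topological order, and a union bound over the $|S|!$ orders together with the density of $G(n,p)$ makes this probability negligible. Hence the remaining digraph $D$ has independence-like acyclic number below $n/(2k)$, so $\dic(D) \ge k$, and it has no short directed cycles.

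The key step is then to show that $\diomega(D)$ cannot be small once we know $\dic$ is large and the digirth is large. A clean route is the following. Fix any ordering $\prec$ of $D$ and consider the backedge graph $D^\prec$. Every directed cycle of $D$ must use at least one backward arc. If we only knew $\omega(D^\prec)$ were small, we could not immediately conclude anything, so instead we choose the digraph so that the inequality comes directly. Here the better plan is to avoid random graphs and use a construction where large $\diomega$ is forced. Start from Kim's tournaments $D_n$ or $A_n$, which have $\diomega \to \infty$ by \cite{Kim}, so pick $H$ with $\diomega(H) \ge k$. Then blow up $H$ so that the short directed cycles disappear while some subset still realises $H$-patterns in every ordering.

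Concretely, I would use a Ramsey/hypergraph-girth argument in the style of Nešetřil–Rödl. Let $h = |V(H)|$ and take an $h$-uniform hypergraph $\mathcal H$ of girth greater than $\ell$ with the property that every colouring of its vertices by $h!$ colours, or more precisely every ordering of its vertices, has an edge whose vertices appear in a prescribed relative order pattern. Such hypergraphs, with large girth and high chromatic number, exist by Erd\H{o}s–Hajnal/Lovász. Order the vertex set of $\mathcal H$ linearly by some fixed reference order, and place a copy of $H$ on each hyperedge, with vertices of the hyperedge identified with $V(H)$ via the reference order. Girth greater than $\ell$ implies that any directed cycle of length at most $\ell$ lies inside a single hyperedge, since two hyperedges share at most one vertex and there is no short Berge cycle. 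So we only need $H$ itself to have no short cycles, which fails for tournaments. This is the main obstacle.

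To fix this, I would iterate the construction: apply the hypergraph substitution to a suitable base digraph and induct on $\ell$, replacing each hyperedge by a digraph with large $\diomega$ and digirth greater than $\ell-1$. Then every cycle of length at most $\ell$ lies within one hyperedge copy, which has none. The lower bound on $\diomega$ requires showing that for any ordering $\prec$, some hyperedge receives an ordering inducing a large clique in its copy. We would also need these cliques to combine, or at least a single copy to witness $k$. The single-copy witness is impossible, because a copy has $\diomega \ge k$ by induction, so any ordering restricted to it already gives a clique of size $\ge k$ in its backedge graph. That is exactly what we want, and it is trivial: $\diomega$ is monotone under subdigraphs. So the real difficulty is only the base of the induction and the existence of a digraph with large $\diomega$ and digirth greater than $\ell$. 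For this I expect to fall back on the random construction of the first paragraph, bounding $\omega(D^\prec)$ from below via the bound $\chr(D^\prec) \le f(\omega(D^\prec))$. This bound fails in general, so the hardest step will be a direct first-moment bound showing that every ordering of the random digraph has a backward $k$-clique, by a union bound over the $n!$ orderings against the probability that no $k$-set forms a backward clique.
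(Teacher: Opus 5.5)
Your proposal never reaches a working construction. Each of the three routes you try fails at an identifiable step.

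\textbf{Large $\dic$ does not give large $\diomega$.} The opening claim that $\dic(G)\ge k$ gives $\diomega(G)\ge k$ is backwards. Since $\ome(D^\prec)\le\chr(D^\prec)$ for every ordering, one always has $\diomega(D)\le\dic(D)$, and the gap is unbounded: the tournaments $TB_k$ of Section~\ref{sec:GS} have $\diomega=2$ and $\dic\ge k/2$. So digraphs of large dichromatic number and large digirth are of no use here.

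\textbf{The hypergraph blow-up is circular.} Placing copies of $H$ on hyperedges cannot remove the short cycles inside a single copy. Inducting with pieces of digirth $>\ell-1$ still leaves $\ell$-cycles inside copies. As you notice yourself, monotonicity of $\diomega$ makes the hypergraph layer idle, so you end up needing the statement itself for the piece. Moreover, identifying hyperedges with $V(H)$ via a global reference order kills any forced-pattern property, since the reference order induces the identity pattern on every hyperedge.

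\textbf{The random fallback cannot work.} Every backedge graph is a subgraph of the underlying graph. For $\ell\ge 3$, your choice $p=n^{-1+\varepsilon}$ with $\varepsilon<1/3$ makes $G(n,p)$ $K_4$-free w.h.p., since the expected number of $K_4$'s is $O(n^4p^6)=O(n^{-2+6\varepsilon})$. Hence $\diomega(D)\le 3$. Equivalently, for a fixed ordering the backedge graph is distributed as $G(n,p/2)$, which is $K_k$-free with probability $1-o(1)$ for $k\ge 4$. So the union bound over $n!$ orderings goes the wrong way.

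\textbf{The missing idea} is a construction that forces a backward clique deterministically. The paper lets $R_1$ be a single vertex and builds $R_{j+1}$ from the directed cycle of length $\ell+1$ by substituting each vertex with a copy $S_0,\dots,S_\ell$ of $R_j$, so that $S_{i-1}\Ra S_i$ with indices modulo $\ell+1$. Take any ordering $\prec$ and let $v$ be its first vertex, say in $S_i$. All of $S_{i-1}$ dominates $v$ and lies after $v$. Thus $v$ is complete to $S_{i-1}$ in the backedge graph, any clique of $S_{i-1}^{\prec}$ extends by $v$, and $\diomega(R_{j+1})\ge\diomega(R_j)+1$. For digirth: a directed cycle of length at most $\ell$ misses some copy. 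Once a copy is deleted, the remaining copies are joined only along a directed path, so the cycle lies inside one copy and induction applies.

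Your forced-pattern intuition can also be made to work, but only with an \emph{acyclic} piece. Place $TT_k$ on each hyperedge of a random $k$-uniform hypergraph with $n^{1+\varepsilon}$ edges, using independent uniformly random labellings instead of a reference order. For a fixed ordering, each hyperedge is totally reversed with probability $1/k!$, independently. Chernoff and a union bound over the $n!$ orderings then give many reversed hyperedges in every ordering. Next, delete one hyperedge from each Berge cycle of length at most $\ell$, including pairs sharing two vertices; this removes only $O(n^{\ell\varepsilon})=o(n^{1+\varepsilon})$ edges. The result has digirth $>\ell$, because a directed cycle must use two hyperedges, and a backward $K_k$ in every ordering.
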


\begin{proof}
    Let $R_1$ denote a one-vertex tournament and, for $i \ge 1$, let $R_{k+1}$ be the digraph obtained from the directed cycle of length $\ell+1$ by substituting every vertex with a copy of $R_k$. We first show that for all $k \ge 1$, we have $\diomega(R_k) \ge k$.
    Clearly $\diomega(R_1)=1$. Let $k \ge 2$ be an integer and $\prec$ an ordering of $R_k$. The first vertex $v \in V(R_k)$ in that ordering has a copy $W$ of $R_{k-1}$ in its in-neighbourhood. As $v \prec W$ and $v \La W$, whenever $K$ is a clique in $W^\prec$, then $K \cup \{v\}$ is a clique in $R_k^\prec$. Hence $\diomega(R_k) \ge \diomega(R_{k-1})+1$, so by induction we have $\diomega(R_k) \ge k$.

    Suppose $R_k$ contains a directed cycle $C$ of length at most $\ell$ for some integers $k \ge 1$, and suppose $k$ is minimal with that property. Clearly $k>1$ thus $R_k$ is made from $\ell+1$ disjoint copies $S_0, \dots, S_\ell$ of $R_{k-1}$ by adding from $S_\ell$ to $S_0$ and from $S_{i-1}$ to $S_i$ for each $i \in [\ell]$. The cycle $C$ intersects at most $\ell < \ell+1$ of those copies, so it avoids one, say $S_0$ wlog. By minimality of $k$, the cycle $C$ is not included in one copy of $R_{i-1}$, so there exist $1 \le i<j \le \ell$ such that $C$ intersects both $S_i$ and $S_j$. However, there is no path from $S_j$ to $S_i$ in $R_k \setminus S_0$, a contradiction.
\end{proof}

We can also deduce approximation algorithms for the directed clique number of a tournament. Note that for $k \le 2$, it is easy to decide if a digraph $D$ verifies $\diomega(D) \ge k$ in polynomial-time.

\begin{corollary}\label{coro:approx}
    For every integer $k \ge 3$, there exists an integer $f(k) \ge 0$ and a polynomial-time algorithm that, given on input a tournament $T$, certifies either $\diomega(T) \ge k$ or $\diomega(T) < f(k)$.
\end{corollary}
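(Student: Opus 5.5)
Fix $k \ge 3$ and apply the Corollary of Crew et al.\ stated just above. It gives integers $f(k)$ and $\ell(k)$ such that every tournament $T$ with $\diomega(T) \ge f(k)$ contains a subtournament $X$ with $|X| \le \ell(k)$ and $\diomega(X) \ge k$. With this choice of $f(k)$, the algorithm enumerates all subsets $X \subseteq V(T)$ with $|X| \le \ell(k)$. There are $O(n^{\ell(k)})$ of them, which is polynomial since $k$, and hence $\ell(k)$, is fixed.

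For each such $X$, we compute $\diomega(T[X])$ exactly by brute force. We run over all $|X|! \le \ell(k)!$ orderings $\prec$ of $X$, and for each one compute $\omega(T[X]^\prec)$ by checking all subsets of $X$. This costs a constant depending only on $k$ per subset. The output is decided as follows.
\begin{itemize}
    \item If some $X$ has $\diomega(T[X]) \ge k$, the algorithm outputs ``$\diomega(T) \ge k$''. This is correct because $\diomega$ is monotone under taking subtournaments: restricting an optimal ordering of $T$ to $X$ gives an ordering of $T[X]$ whose backedge graph is an induced subgraph of $T^\prec$.
    \item Otherwise every $X$ with $|X| \le \ell(k)$ has $\diomega(T[X]) < k$. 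By the contrapositive of the Corollary, $\diomega(T) < f(k)$, and the algorithm certifies this.
\end{itemize}

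The total running time is $O(\ell(k)! \cdot 2^{\ell(k)} \cdot n^{\ell(k)})$, which is polynomial for fixed $k$. The one point needing care is monotonicity, which is immediate. The real content lies entirely in the Corollary, that is, in Theorem~\ref{thrm:An_Dn}, which we may assume.

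Alternatively, one could search directly for copies of $A_n$ or $D_n$ for a suitable $n$ with $\diomega(A_n),\diomega(D_n) \ge k$, which exists by Kim's result. The version above avoids needing explicit bounds on $\diomega(A_n)$ and $\diomega(D_n)$.
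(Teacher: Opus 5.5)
Your proof is correct and is essentially the paper's argument. Both enumerate vertex subsets of bounded size and use Crew et al.\ to certify $\diomega(T) < f(k)$ when the search fails; the only difference is that the paper searches specifically for copies of $A_{N(k)}$ or $D_{N(k)}$ (with $N(k)$ chosen so that both have directed clique number at least $k$), while you go through the intermediate corollary and compute $\diomega$ of every small subtournament exactly, and your closing remark overstates this difference since the paper also only uses the existence of such an $N(k)$, not explicit bounds.
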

\begin{proof}
    Let $k \ge 3$ be an integer. As $\diomega(A_n)$ and $\diomega(D_n)$ both go to infinity with $n$, there exists an integer $N(k) \ge 1$ such that $\diomega(D_{N(k)}) \ge k$ and $\diomega(A_{N(k)}) \ge k$. By \Cref{thrm:An_Dn}, there exists an integer $f(k) \ge 0$ such that every tournament $T$ with $\diomega(T) \ge f(k)$ contains a subtournament isomorphic to $A_{N(k)}$ or $D_{N(k)}$.
    
    Let $m(k) = \max \{ |V(A_{N(k)})|, |V(D_{N(k)})|\}$. Given a tournament $T$ one can check in time $O_k \big( |V(T)|^{m(k)} \big)$ if $T$ contains a subtournament isomorphic to $A_{N(k)}$ or $D_{N(k)}$ by iterating over all sets of $m(k)$ vertices. If $T$ does contain $A_{N(k)}$ or $D_{N(k)}$ then $\diomega(T) \ge k$, otherwise \Cref{thrm:An_Dn} certifies that $\diomega(T) < f(k)$.
\end{proof}

We next describe such an algorithm in the case $k=3$, with an explicit value for $f(k)$, and that, when the output is $\diomega(T) <  f(k)$, returns an ordering certifying it. We use the following algorithm as a subroutine.

\begin{proposition}[Klingelhoefer \& Newman \cite{KN24}]\label{algo:color}
    There exists a polynomial-time algorithm that, given on input a tournament $T$ with $\dic(T) \le 2$, computes a 10-dicolouring of $T$.
\end{proposition}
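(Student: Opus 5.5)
The plan starts from the reformulation that, for tournaments, a set is acyclic if and only if it contains no directed triangle. So $\dic(T) \le 2$ means the $3$-uniform hypergraph $\mathcal H(T)$ of directed triangles of $T$ is $2$-colourable, and a $c$-dicolouring of $T$ is exactly a proper $c$-colouring of $\mathcal H(T)$. Colouring $2$-colourable $3$-uniform hypergraphs with $O(1)$ colours is hard in general, so the algorithm has to use the tournament structure. Fix an unknown $2$-dicolouring $(A,B)$ of $T$, with $A$ and $B$ transitive. The algorithm will guess a constant number of \emph{pivot} vertices of this hidden partition together with their colours. This costs only polynomially many branches, and for each guess we either produce a $10$-dicolouring or reject the guess. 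Every candidate output can be verified in polynomial time by checking for directed triangles in each colour class, so a wrong guess does no harm.

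\textbf{Key step 1 (binary constraints from a pivot).} Once a vertex $v$ is known to lie in $A$, every directed triangle $vyz$ becomes the constraint ``$y$ and $z$ are not both in $A$''. These are $2$-SAT clauses, and together with the analogous clauses for pivots guessed in $B$ they can be solved in polynomial time. The pivots I would guess are the source and sink of $A$ and of $B$ in their transitive orders, possibly also their second elements. I would then partition $V(T)$ into a constant number of cells according to the adjacency pattern of each vertex towards the pivots. The intended claim is that inside each cell, every directed triangle of the cell has enough vertices ``pinned'' by pivot-triangles that the $2$-SAT solution restricted to the cell leaves only a transitive, or at least $2$-SAT-controllable, residual structure.

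\textbf{Key step 2 (assembling the colours).} For each cell $X$, solve the $2$-SAT instance given by the triangles through the pivots. Define $X_1$ to be the vertices set true and $X_2$ the vertices set false. Then show that $T[X_1]$ and $T[X_2]$ are transitive, or else that they split further into transitive pieces using one more round of pivots. Summing the number of pieces over the cells that can share a colour should give a bounded total. Aiming for the constant $10$, I would group the cells so that pieces from different cells can share a colour whenever all arcs between them go the same way, since a union $X \Ra Y$ of two transitive sets is transitive.

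\textbf{Main obstacle.} The hard step is Key step 1: showing that a $2$-SAT solution that respects only the triangles through a few pivots also kills every directed triangle lying entirely inside a colour class of a cell. A $2$-SAT solution need not agree with the hidden partition $(A,B)$, so a triangle $xyz$ avoiding all pivots could become monochromatic. My first attempt would be as follows. Use the transitivity of $A$ and $B$ to show that, within a cell, any directed triangle must contain two vertices on which the hidden colouring is determined by a pivot, for instance because they dominate or are dominated by the guessed source or sink. Then argue that the $2$-SAT clauses force these two vertices to differ. If this fails for some cells, I would recurse a bounded number of times on the offending cells with fresh pivots, accepting a larger constant, and then tighten the bookkeeping to reach $10$.
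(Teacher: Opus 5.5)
\textbf{There is a genuine gap.} Key step 1 is the whole difficulty of the statement, and you leave it as an ``intended claim''. Worse, the mechanism you propose cannot deliver it. A directed triangle through a pivot $p$ consists of $p$, one vertex of $N^+(p)$ and one vertex of $N^-(p)$. Your cells are the classes of vertices with the same adjacency to every pivot, so no 2-SAT clause ever contains two vertices of the same cell. The directed triangles inside a cell are therefore not encoded at all. Any control over them would have to come from implications routed through other cells, which you never exhibit, and a 2-SAT solution need not agree with $(A,B)$.

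Your suggested remedy, that transitivity of $A$ and $B$ pins the hidden colour of two vertices of every such triangle, also fails. Let $a_1,a_2,b_1,b_2$ be the sources and sinks of $A$ and $B$. Transitivity pins colours only on $N^-(a_1)\cup N^+(a_2)$ (which lies in $B$) and on $N^-(b_1)\cup N^+(b_2)$ (which lies in $A$). The remaining cell $N^+(a_1)\cap N^-(a_2)\cap N^+(b_1)\cap N^-(b_2)$ may contain an arbitrary 2-dicolourable tournament $R$. To see this, take transitive tournaments $P\Rightarrow R\Rightarrow Q$ with $P\Rightarrow Q$, and colour $P$ and $Q$ alternately together with a 2-dicolouring of $R$. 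Then:
\begin{itemize}
\item no vertex of $P\cup Q$ lies on a directed triangle;
\item the first and last few vertices of each colour class, which are your pivots (second elements included), all lie in $P\cup Q$;
\item hence the 2-SAT instance is empty, the all-true assignment is admissible, and you are back to the original problem on $R$.
\end{itemize}

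\textbf{The fallbacks do not repair this.} Recursing a bounded number of times with fresh pivots meets the same obstruction at every level. With $k$ pivots, some cell has at least $(n-k)/2^k$ vertices and no internal clause. After boundedly many rounds there are still linear-size cells whose internal triangles were never constrained.

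Even granting Key step 1, giving each piece its own colour costs about $2^{k+1}$ colours. Your merging rule needs all arcs between two pieces to go one way, and adjacency patterns towards the pivots do not provide this. So the constant $10$ is never derived.

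Since $3$-dicolouring a 2-dicolourable tournament is already NP-hard (as the paper recalls), a polynomial algorithm cannot reconstruct a hidden 2-dicolouring in general. The argument must instead lose colours for a structural reason. The paper does not prove this proposition; it imports it from \cite{KN24}, whose algorithm rests on a decomposition lemma for tournaments. The layering in this paper's proof of the $100$-approximation shows the kind of statement your sketch is missing. It takes a shortest path $z_0\cdots z_\ell$ and blocks $U_i=\Gamma(z_{i-1}z_i)\cup\{z_i\}$, with no arc from $U_i$ to $U_j$ when $j\ge i+5$. Consequently the five residue classes of blocks can each reuse one palette.
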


Klingelhoefer and Newman also prove that, in this statement, the 10 cannot be replaced by a 3 unless P=NP, and ask what is the best number of colours one can achieve. It is open whether there is such an algorithm for digraphs in general. We refer the reader to \cite{KN24} for existing results and further discussion on that matter.

\begin{proposition}
    There exists a polynomial-time algorithm that, given on input a tournament $T$ with $\diomega(T) \le 2$, computes an ordering $\prec$ of $T$ verifying $\omega(T^\prec) \le 100$.
\end{proposition}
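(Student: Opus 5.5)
The bound $100=10\cdot 10$ suggests the following shape: an outer structure whose backedge cliques meet at most $10$ parts, and \Cref{algo:color} applied inside each part. I would therefore partition $V(T)$ in polynomial time into parts $L_1,\dots,L_r$, each with $\dic(L_i)\le 2$. I would colour each part with at most $10$ acyclic classes using \Cref{algo:color}, and then build $\prec$ by concatenating the parts in a suitable order. Inside each part, the colour classes are placed one after another, and each class is ordered topologically.

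Within one part, every backward arc then joins two different colour classes, so a clique of $T^\prec$ meets each part in at most $10$ vertices. It remains to arrange the parts so that any clique of $T^\prec$ meets at most $10$ parts. That gives $\omega(T^\prec)\le 100$. (This layout is a guess; in a finished proof the numbers might be distributed differently, for instance as $\le 10$ colours times $\le 10$ parts.)

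The basic structural fact I would use is this. Let $\prec^*$ be an unknown ordering with $\omega(T^{\prec^*})\le 2$, and let $v$ be any vertex. The neighbourhood of $v$ in the triangle-free graph $T^{\prec^*}$ is an independent set, and this neighbourhood is exactly
\[
S_v=\{u\in N^+(v): u\prec^* v\}\cup\{u\in N^-(v): v\prec^* u\}.
\]
Since an ordering with no backward arcs inside a set certifies acyclicity, $S_v$ induces an acyclic subtournament. Hence $N^+(v)$ splits into an acyclic part together with the out-neighbours of $v$ placed after it, and symmetrically for $N^-(v)$. Applying the same reasoning to every vertex and to pairs of vertices, the aim is to show that suitable sets can be computed from $T$ alone, without knowing $\prec^*$. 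The candidates are sets such as $N^+(v)\cap N^-(w)$ for an arc $wv$, or sets obtained by peeling off vertices whose in- or out-neighbourhood is acyclic. The goal is to show that these sets have dichromatic number at most $2$ (one acyclic set from the ``before'' side and one from the ``after'' side). This step is where the hypothesis $\diomega(T)\le 2$ enters.

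The parts would then be defined greedily. I would repeatedly pick a vertex (or a pair, found by trying all $O(n^2)$ candidates) whose associated set is $2$-dicolourable. \Cref{algo:color} can serve as the test, since if it fails the candidate is discarded. That set becomes the next part, and the process recurses on the remainder. Because the recursion respects the out/in split around the chosen vertex, the parts come with a natural linear order in which most arcs between them go forward. The backward arcs between parts should then be confined to parts at bounded distance in this linear order; the desired bound is at most $10$ consecutive parts.

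The main obstacle is exactly this last interaction bound: proving that a clique of $T^\prec$ cannot spread over many parts. I would first try to show that a backward arc between $L_i$ and $L_j$ with $|i-j|$ large produces, together with the pivot vertices defining the intermediate parts, a triangle in $T^{\prec^*}$. That would contradict $\diomega(T)\le 2$. If such a clean locality statement fails, the fallback is to also dicolour the graph of parts (treating each part as a single vertex with its forward/backward relations) and to absorb that factor into the constant $100$.
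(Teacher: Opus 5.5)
\textbf{There is a genuine gap.} The essential step is left unproved, and the route you sketch towards it would not work as stated.

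First, your candidate sets are not $2$-dicolourable in general. Take an arc $wv$. If $w\prec^* v$, the set $N^+(v)\cap N^-(w)$ does split into two stable sets of $T^{\prec^*}$. If $v\prec^* w$, however, the vertices $u$ with $v\prec^* u\prec^* w$ are adjacent to neither $v$ nor $w$ in $T^{\prec^*}$. Any tournament with a triangle-free backedge graph can sit there, so this set can have arbitrarily large dichromatic number. Discarding the candidates on which \Cref{algo:color} fails throws away exactly the information the paper exploits: such a failure certifies that the arc is backward in $\prec^*$.

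The paper therefore \emph{reverses} that arc. Reversing a backward arc only deletes an edge of the backedge graph, so $\prec^*$ still has a triangle-free backedge graph, and the number of backward arcs strictly decreases. Iterating until no test fails yields a tournament $S$ in which \emph{every} arc $xy$ comes with a $10$-dicolouring of $\Gamma_S(xy)=N^-_S(x)\cap N^+_S(y)$. The set $H$ of reversed arcs consists of edges of $T^{\prec^*}$, hence is triangle-free.

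Second, the locality you need does not come from triangles in $T^{\prec^*}$. The paper obtains it from a shortest directed path $z_0,\dots,z_\ell$ in (a strong component of) $S$. Here $z_0$ has acyclic out-neighbourhood and $z_\ell$ has acyclic in-neighbourhood; the last and first vertices of $\prec^*$ show such vertices exist, and they can be found by brute force. Define the layers $U_0=N^+_S[z_0]$, $U_i=\Gamma_S(z_{i-1}z_i)\cup\{z_i\}$ for $i\in[\ell]$, and $U_{\ell+1}=N^-_S(z_\ell)$. These cover $V(S)$, and each carries a $10$-dicolouring, since $z_i$ is a source of $U_i$. An arc from $U_i$ to $U_j$ with $j\ge i+5$ would give a shortcut of the path, contradicting its minimality.

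The path may use arcs that are backward in $\prec^*$. So it is essential that every arc of $S$, not just some selected pairs, has a good $\Gamma$-set. If you run this layering directly in $T$, this is exactly where it breaks.

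Grouping the layers by index modulo $5$ gives a $50$-dicolouring of $S$. Listing the colour classes one after another, each ordered topologically, yields $\prec$ with $\chi(S^\prec)\le 50$. Now return to $T$: two vertices of one colour that are adjacent in $T^\prec$ must be joined by an arc of $H$. Since $H$ is triangle-free, each colour class meets a clique of $T^\prec$ in at most $2$ vertices.

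So the bound is $100=2\cdot 5\cdot 10$, not $10\cdot 10$. Your within-part count (at most $10$ vertices of a clique per $10$-coloured part) is fine. What your scheme lacks is the arc-reversal step, the shortest-path layering that gives locality, and the factor $2$ that pays for the reversals.
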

\begin{proof}
    Let $T$ be a tournament with $\diomega(T) \le 2$. Fix a $\diomega$-ordering $\prec$ of $T$ (of course the algorithm has no access to $\prec$). Given two vertices $x, y \in V(T)$, we denote by $\Gamma_T(xy)$ the set $N_T^-(x) \cap N_T^+(y)$.
    \begin{claim}\label{claim:A-backward}
        If $x \prec y$ then $\dic(\Gamma_T(xy)) \le 2$.
    \end{claim}
    \begin{proofclaim}
        Let $\prec$ be a $\diomega$-ordering of $T$ and $x,y \in V(T)$ two vertices with $x \prec y$. The set $\Gamma_T(xy)$ can be partitioned into three parts $\Gamma_1, \Gamma_2, \Gamma_3$ such that $\Gamma_1 \prec x \prec \Gamma_2 \prec y \prec \Gamma_3$. As $x \La \Gamma_T(x,y) \La y$, in $T^\prec$ every vertex of $\Gamma_1 \cup \Gamma_2$ is adjacent to $y$, and every vertex of $\Gamma_3$ is adjacent to $x$. Since $T^\prec$ is triangle-free, the sets $\Gamma_1 \cup \Gamma_2$ and $\Gamma_3$ are stable in $T^\prec$, and thus acyclic in $T$. Hence $\dic(\Gamma_T(xy)) \le 2$. 
    \end{proofclaim}
    For every arc $xy \in A(T)$, we run the algorithm given by Proposition~\ref{algo:color} on the tournament $T[\Gamma_T(xy)]$. This either yields a $10$-dicolouring of $\Gamma_T(x,y)$, or fails which certifies that $\dic(\Gamma_T(x,y)) > 2$.
    If the second case occurs for some arcs $xy$, then we flip the arc $xy$ (change $x \ra y$ to $y \ra x$) and repeat the previous step for all arcs. This process terminates after $O(n^2)$ iterations because, by Claim~\ref{claim:A-backward} the arc $xy$ is backward in $\prec$, thus after flipping it the backedge graph is still triangle-free so Claim~\ref{claim:A-backward} still holds, hence the number of backward arc with respect to $\prec$ is strictly decreasing.
    We end up with a tournament $S$ obtained from $T$ by flipping a set of arcs $H$, such that $\omega(S^\prec)=2$ and for every arc $xy \in A(S)$, we have a 10-dicolouring of $\Gamma_S(xy)$. Note that the arcs in $H$ were all backward in $T^\prec$ so $H$ contains no triangle. We next show that $\dic(S) \le 50$.

    Suppose $x \in V(T)$ is the last vertex in $\prec$. Then $N_{S^\prec}(x) = N^+_S(x)$ and $N_{S^\prec}(x)$ is stable as $S^\prec$ is triangle-free, thus $N^+_S(x)$ is acyclic in $S$. Likewise, if $y \in V(T)$ is the last vertex in $\prec$ then $N^-_S(y)$ is acyclic in $S$. It is easy to find in polynomial time two vertices $x,y \in V(T)$ such that $N^-_S(x)$ and $N^+_S(y)$ are both acyclic in $T$.  Without loss of generality, assume $S$ is strongly connected, as otherwise we may handle each strong component separately. Compute a shortest directed path $x = z_0, z_1, \ldots, z_\ell = y$  from $x$ to $y$ in $S$, and define:
    \begin{align*}
        U_0 &= N^+_S[z_0], \\
        U_i &= \Gamma_S(z_{i-1} z_i) \cup \{z_i\} \quad \text{for } i \in [\ell], \\
        U_{\ell+1} &= N^-_S(z_\ell).
    \end{align*}

    For $v \in V(T) \setminus U_0$, we have $z_0 \ra v$ thus $i = \min \{ j \in [\ell] \colon z_{j-1} \ra v \}$ is well-defined, and then $z_{j-1} \ra x \ra z_j$ so $x \in U_i$. Hence $(U_i)_{0 \le i \le \ell+1}$ covers $V(S)$. Note that $U_0$ and $U_{\ell+1}$ are acyclic by choice of $x=z_0$ and $y=z_\ell$, and for $i\in [\ell]$, we have 10-dicolouring of $U_i$ as $z_i \Ra \Gamma_S(z_{i-1} z_i)$.

    \begin{claim}\label{claim:no-arc}
        For $5 \le i+5 \le j \le \ell+1$, there is no arc from $U_i$ to $U_j$ in $S$.
    \end{claim}
    \begin{proofclaim}
        Suppose $u \in U_i$ and $v \in U_j$ are such that $u \ra v$. Observe that either $u=z_i$ or $z_i \ra u$, and $v = z_j$ or $v \ra z_{j-1} \ra z_j$. In all cases, there is a path $P$ of length at most 4 from $z_i$ to $z_j$. Then $z_0, \dots, z_i, P, z_j, \dots, z_\ell$ is a path of length $i + 4 + (\ell-j) \le \ell-1$ from $z_0$ to $z_\ell$, which contradicts the minimality of $\ell$.
    \end{proofclaim}

    We now build a $50$-dicolouring of $S$. Partition the index set $\{0, \ldots, \ell+1\}$ into five residue classes $C_r = \{i \mid i \equiv r \pmod{5} \}$ for $r \in \{0,1,2,3,4\}$, and set $W_r = \bigcup_{i \in C_r} U_i$. For every two indices $i < j$ in $C_r$ we have $j \ge i+5$, so by \Cref{claim:no-arc} there is no arc from $U_i$ to $U_j$ in $S$, meaning all arcs between distinct blocks of $W_r$ go from $U_j$ to $U_i$. Ordering the blocks of $W_r$ in strictly decreasing order of index therefore makes every inter-block arc forward, so the backedge graph of $S[W_r]$ has no inter-block edges and $\dic(S[W_r]) \le \max_{i \in C_r} \dic(S[U_i]) \le 10$. Assigning a distinct set of $10$ colours to each $W_r$ yields a valid $50$-dicolouring of $S$, inducing an ordering $\prec$ of $V(T)$ with $\chi(S^\prec) \le 50$.

    It remains to show that $\omega(T^\prec) \le 100$. Let $K$ be a clique in $T^\prec$ and colour its vertices with the $50$-colouring of $S^\prec$. Consider two vertices $u \prec v$ in $K$ with the same colour. The pair $\{u,v\}$ is not an edge of $S^\prec$ (proper colouring), yet it is an edge of $T^\prec$, so the arc $v \to_T u$ is backward under $\prec$. If $vu \notin H$ then $S$ and $T$ agree on this arc, so $vu$ would also be a backward arc in $S$, giving an edge in $S^\prec$, a contradiction. Hence $vu \in A$, and by \Cref{claim:A-backward}, $vu$ is backward in every $\diomega$-ordering of $T$, i.e., $u \prec_\sigma v$ for all $\diomega$-orderings $\sigma$ of $T$.

    Suppose some colour class contains $m \ge 3$ vertices $v_1 \prec \cdots \prec v_m$ from $K$. For each pair $i < j$ the argument above gives $v_j v_i \in A$ and $v_i \prec_\sigma v_j$ in every $\diomega$-ordering $\sigma$. Fix any such $\sigma$. The arcs $v_j \to v_i$ for $i < j$ are all backward under $\sigma$, so the backedge graph $T^\sigma$ contains the complete graph $K_m$, giving $\omega(T^\sigma) \ge m \ge 3 > 2$. This contradicts $\omega(T^\sigma) = \diomega(T) \le 2$.
    Every colour class of the $50$-colouring therefore contributes at most $2$ vertices to $K$, so $|K| \le 2 \times 50 = 100$. Since every step of the algorithm runs in polynomial time, this completes the proof.
\end{proof}

We do not know if the 100 can be improved in this statement, or if we one can give small explicit bounds for $f(k)$ when $k > 3$ in Corollary \ref{coro:approx}. Additionally, it is open whether such algorithms exist for digraphs in general.

\section{The \Gya-Sumner Conjecture for Tournaments}\label{sec:GS}

This section is dedicated to the $\dic$-boundedness of tournament. In the first part, we establish new $\dic$-bounding tournaments, and use them to deduce a conjecture of Aboulker, Aubian, Charbit, and Lopes \cite{AACL23} which state that if a class of tournaments has bounded twin-width, then it is $\dic$-bounded. In the second part, we find non $\dic$-bounding tournaments that admits a backedge graph which is a forest, contradicting another conjecture posed in \cite{AACL23}, and use these tournaments to further disprove two conjectures of Kim \cite{Kim}.
 
\subsection{$\dic$-bounding tournaments}

The $\dic$-boundedness of tournament is inherently related to the $\chr$-boundedness of ordered graphs. In fact, for a tournament to be $\dic$-bounding, it suffices that any of its backedge graphs is $\chr$-bounding.

\begin{proposition}[Aboulker, Aubian, Charbit, \& Lopes \cite{AACL23}]\label{prop:backedge_bounding}
    Let $H$ be a tournament. If $H$ has an ordering $\prec$ such that $H^\prec$ is $\chr$-bounding, then $H$ is $\dic$-bounding.
\end{proposition}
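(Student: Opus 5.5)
The idea is to pass from tournaments to ordered graphs via $\dic$-orderings and then apply the $\chr$-bounding hypothesis on $H^\prec$. Let $f$ be a $\chr$-binding function for $\Forb(H^\prec)$ (ordered graphs). We want to show that $\dic(T) \le f(\diomega(T))$, or some function of it, for every $H$-free tournament $T$.

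The key observation is that if $T$ is $H$-free, then for every ordering $\sigma$ of $T$ the ordered graph $T^\sigma$ is $H^\prec$-free. Suppose instead that $\varphi$ is an order-preserving embedding of $H^\prec$ into $T^\sigma$ as an induced ordered subgraph. Then for $u \prec v$ in $H$, the pair $\varphi(u)\varphi(v)$ is an edge of $T^\sigma$ exactly when $vu \in A(H)$. Since $\varphi(u) \sigma \varphi(v)$, an edge means the arc is $\varphi(v)\varphi(u)$, and a non-edge means the arc is $\varphi(u)\varphi(v)$, because $T$ is a tournament. Hence $\varphi$ is an isomorphism from $H$ onto a subtournament of $T$, which is a contradiction. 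This uses only the uniqueness of a tournament with a given ordered backedge graph, as noted in the introduction.

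Now take an $\diomega$-ordering $\sigma$ of $T$, so that $\omega(T^\sigma) = \diomega(T)$. Since $T^\sigma \in \Forb(H^\prec)$, we get $\chr(T^\sigma) \le f(\diomega(T))$. By equation~\eqref{eq:dic}, $\dic(T) \le \chr(T^\sigma)$; concretely, each colour class is stable in $T^\sigma$, so every arc inside it is forward with respect to $\sigma$, and the class is therefore acyclic. Thus $\dic(T) \le f(\diomega(T))$, and $\Forb(H)$ is $\dic$-bounded with the same function $f$.

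I expect no real obstacle here. The only point needing care is to check that the direction conventions for embedding ordered graphs match those for backedge graphs, so that the induced ordered copy really yields a copy of $H$ rather than of its reverse.
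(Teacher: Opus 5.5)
Your proof is correct and is the paper's argument: you take an $\diomega$-ordering $\sigma$ of an $H$-free tournament $T$, observe that $T^\sigma$ is $H^\prec$-free, and combine the $\chr$-binding function with $\dic(T)\le\chr(T^\sigma)$. Your explicit check that an induced ordered copy of $H^\prec$ in $T^\sigma$ gives a copy of $H$ in $T$ spells out a step the paper leaves implicit; note only that your opening sentence says ``$\dic$-orderings'' where you actually (and correctly) use an $\diomega$-ordering.
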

\begin{proof}
    Suppose $H^\prec$ is $\chr$-bounding, that is, there exists a function $f$ such that $\chr(G) \le f(\ome(G))$ for all $H^\prec$-free ordered graphs $G$. Let $T$ be an $H$-free tournament and $<_T$ an $\diomega$-ordering of $T$. Then, the ordered graph $T^{<_T}$ is $H^\prec$-free, as otherwise $T$ is not $H$-free. Thus by hypothesis $\chr(T^{<_T}) \le f(\ome(T^{<_T}))$. Moreover $\dic(T) \le \chr(T^{<_T})$ by (\ref{eq:dic}), and $\omega(T^{<_T}) = \diomega(T)$ by the choice of $<_T$. Hence we have $\dic(T) \le f(\diomega(T))$, so $H$ is $\dic$-bounding.
\end{proof}

As any ordered graph is a backedge graph of a (unique) tournament, we can build a $\dic$-bounding tournament from every $\chr$-bounding ordered graph.
Several $\chr$-bounding ordered graphs are identified in \cite{patterns}. It is proven that every ordered star (tree with one vertex adjacent to every other vertex) is $\chr$-bounding. Further, given two $\chr$-bounding ordered graph $H_1$ and $H_2$, the concatenation of $H_1$ and $H_2$ is $\chr$-bounding, and also, nesting $H_1$ between two consecutive adjacent vertices in $H_2$ yields a $\chr$-bounding ordered graph. Additionally, the ordered graph with 4 vertices $a \prec b \prec c \prec d$ and the two edges $ac$ and $bd$ is shown to be $\chr$-bounding in \cite{cross}.

\medskip

We next prove that the tournament called $U_5$ in the literature is $\dic$-bounding. This is the tournament on 5 vertices, say $u_1, \dots, u_5$, such that for $1 \le i < j \le 5$ we have $u_i \la u_j$ if and only if $(i,j) = (1,4)$ or $(2,5)$. Given two tournaments $S$ and $T$ and a vertex $v \in V(S)$, the substitution of $v$ by $T$ in $S$ is the operation of replacing $v$ by a copy of $T$ in $S$ and adding arcs between $T$ and $S-v$ so that $N^-(v) \Ra T \Ra N^+(v)$. We say that a tournament is \emph{prime} if it cannot be obtained via a substitution from two smaller tournaments. For an integer $n \ge 1$, we denote by $T_n$ the tournament on $2n+1$ vertices, say $v_0, \dots, v_{2n}$, such that for each vertex $v_i$ we have $\{ v_{i-n}, \dots, v_{i-1} \} \Ra v_i \Ra \{ v_{i+1}, \dots, v_{i+n}\}$ where indices are taken modulo $2n+1$.

\begin{theorem}[Liu \cite{U5}]\label{thm:U5}
    Let $T$ be a prime tournament. Then, $T$ is $U_5$-free if and only if $T$ is isomorphic to $T_n$ for some $n \ge 1$, or $V(T)$ can be partitioned into three sets $X, Y, Z$ such that $X \cup Y$, $Y \cup Z$, and $Z \cup X$ are transitive in $T$.
\end{theorem}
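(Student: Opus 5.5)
Theorem~\ref{thm:U5} is a structural characterisation of prime $U_5$-free tournaments, due to Liu, so the plan is a direct proof of both implications, with the harder work in the forward direction.

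\emph{Backward direction.} I first check that each $T_n$ is $U_5$-free. In $T_n$ every out-neighbourhood $\{v_{i+1},\dots,v_{i+n}\}$ and every in-neighbourhood is transitive. Hence $T_n$ is \emph{locally transitive}, and so is every subtournament of it. It therefore suffices to show that $U_5$ is not locally transitive. In $U_5$, the out-neighbourhood of $u_1$ is $\{u_2,u_3,u_5\}$; since $u_5\to u_2$, $u_2\to u_3$ and $u_3\to u_5$, this is a directed triangle, so $U_5$ is not locally transitive.

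For a tournament partitioned into $X,Y,Z$ with $X\cup Y$, $Y\cup Z$, $Z\cup X$ transitive, this property is also hereditary. So I only need to check that $V(U_5)$ admits no such partition. Any such partition puts at most two vertices in each directed triangle of $U_5$ into each pair-union, which forces a case check over the at most $3^5$ partitions, pruned by the directed triangles $u_1u_2u_4$, $u_2u_3u_5$, $u_1u_3u_4$-type triples.

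\emph{Forward direction.} Let $T$ be prime and $U_5$-free. I argue by induction on $|V(T)|$ using a prime-deletion lemma: every prime tournament on at least $7$ vertices contains a vertex $v$ (or a pair of vertices) whose removal leaves a prime tournament. This is the Schmerl--Trotter type critical-prime theory, where the critical exceptions are exactly the $T_n$ and a couple of small families. For the base cases, I enumerate the prime tournaments on at most $6$ vertices. Inductively, $T-v$ is prime and $U_5$-free, hence is some $T_n$ or has a good partition. I then analyse how $v$ attaches. Its neighbourhood must be a non-module pattern, since $T$ is prime, and must avoid $U_5$.

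\emph{Case $T-v\cong T_n$.} I show that $N^+(v)$ has to be a cyclic interval. If not, two alternations on the circle, together with a suitable vertex, give a $U_5$. An interval of length $n$ yields $T_{n+1}$ after relabelling. Any other length is impossible: it either produces a $U_5$ or makes $T$ admit a three-part partition built from arcs of the circle.

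\emph{Case $T-v$ has a partition $X,Y,Z$.} I use the induced orderings on each transitive union and show that $v$ can be inserted into one of the parts; otherwise a $U_5$ appears.

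The main obstacle is this second case: controlling how $v$ interacts with three interleaved transitive orders. I would first try to normalise the partition (choose it to maximise $|Y|$, say) and then derive that a violating $v$ together with two witnesses from two parts forms $U_5$. Given the length, in the paper I would likely just cite Liu~\cite{U5} rather than reprove it.
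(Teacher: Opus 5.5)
The paper gives no proof of this statement; it quotes it from Liu, as you suggest in your last sentence. Your backward direction is correct, and the $U_5$ check needs no enumeration. The partition condition says exactly that every cyclic triangle meets all three parts. The cyclic triangles of $U_5$ are $u_1u_2u_4$, $u_1u_3u_4$, $u_2u_3u_5$, $u_2u_4u_5$. The first two force $u_2,u_3$ into the same part, which the third forbids.

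The forward direction has concrete errors. A one-vertex extension of $T_n$ has $2n+2$ vertices, so it is never $T_{n+1}$. Worse, every $T_m$ with $m\ge 2$ is critically prime, since $\{v_m,v_{m+1}\}$ is a module of $T_m-v_0$. So your single-vertex induction step can never output the alternative $T\cong T_m$; that alternative lives entirely in the critical case.

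You dismiss the critical case as ``$T_n$ and a couple of small families''. In fact, in the Schmerl--Trotter classification the critically prime tournaments form three \emph{infinite} families: the circulants $T_n$, and two others usually written $U_{2n+1}$ (whose smallest member is $U_5$ itself) and $W_{2n+1}$. No base-case enumeration absorbs them. You need either that classification plus a check of each family (the $W_{2n+1}$ admit the partition, the $U_{2n+1}$ contain $U_5$), or an actual analysis of two-vertex extensions, which you mention but never carry out.

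Your case $T-v\cong T_n$ ($n\ge 2$) also lists outcomes that cannot occur.
\begin{itemize}
\item An interval $N^+(v)$ of length $n$ or $n+1$ equals $N^+(v_i)$ or $N^+(v_i)\cup\{v_i\}$ for some $i$. Then $\{v,v_i\}$ is a module and $T$ is not prime.
\item $T$ can never admit the partition, because $T_n$ does not and the property is hereditary. Indeed, the cyclic triangles $v_0v_1v_{n+1}$ and $v_0v_2v_{n+1}$ force $v_1,v_2$ into one part, while $v_1v_2v_{n+2}$ is also cyclic.
\end{itemize}
What must really be shown there is that every prime one-vertex extension of $T_n$ contains $U_5$.

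Finally, the partition case, which is the substance of Liu's theorem, is not argued at all. The partition of $T-v$ supplied by induction is an arbitrary one. A partition of $T$ need not restrict to it unless you first prove that a prime tournament has a unique such partition up to renaming the parts, or you allow re-partitioning. The claim that an obstructed $v$ together with two witnesses induces $U_5$ is stated without any argument. As written, the proposal establishes only the easy implication.
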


It is easy to deduce that every prime $U_5$-free tournament is 2-dicolourarble. On the other hand, every non-prime $U_5$-free tournament can be obtained by substitution from two smaller $U_5$-free tournaments. More generally, the class of $U_5$-free tournaments is the closure under substitution of the class of prime $U_5$-free tournaments. It is proven in \cite{Kim} and \cite{AACL23} that if a class of tournament is $\dic$-bounded, then its closure under substitution is $\dic$-bounded too. Because a class of bounded dichromatic number is trivially $\dic$-bounded, it follows that:

\begin{corollary}
    The tournament $U_5$ is $\dic$-bounding.
\end{corollary}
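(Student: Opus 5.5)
The plan is to show that the class $\Forb(U_5)$ is $\dic$-bounded by combining Theorem~\ref{thm:U5} with the fact that $\dic$-boundedness is preserved under closure by substitution. Let $\mathcal{P}$ be the class of prime $U_5$-free tournaments. The argument has three steps.

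\textbf{Step 1: prime $U_5$-free tournaments are $2$-dicolourable.} By Theorem~\ref{thm:U5}, such a tournament is either isomorphic to some $T_n$, or admits a partition $X,Y,Z$ with $X\cup Y$, $Y\cup Z$, $Z\cup X$ transitive.

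In the second case, take the two colour classes $X\cup Y$ and $Z$. Both are acyclic: the first by hypothesis, and $Z\subseteq Y\cup Z$ is transitive.

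In the first case, colour $\{v_0,\dots,v_n\}$ and $\{v_{n+1},\dots,v_{2n}\}$. Each of these is a set of at most $n+1$ cyclically consecutive vertices. For $i<j$ in such an interval we have $j-i\le n$, so $v_i\ra v_j$. Hence each class is transitive.

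So every member of $\mathcal{P}$ has $\dic\le 2$. In particular $\mathcal{P}$ is trivially $\dic$-bounded, via the constant function $2$.

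\textbf{Step 2: $\Forb(U_5)$ lies in the substitution closure of $\mathcal{P}$.} I argue by induction on $|V(T)|$. A $U_5$-free tournament is either prime, or obtained by substituting a tournament $T_2$ for a vertex of a tournament $T_1$, both smaller. Both $T_1$ and $T_2$ are isomorphic to subtournaments of $T$: take a representative vertex of the substituted module. They are therefore $U_5$-free, and the induction hypothesis applies to them.

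\textbf{Step 3: conclude.} By the result of \cite{Kim} and \cite{AACL23}, the substitution closure of a $\dic$-bounded class is $\dic$-bounded. Applied to $\mathcal{P}$, this shows that $\Forb(U_5)$ is $\dic$-bounded, which is the statement.

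The only real content is Step 1, and specifically the check that cyclic intervals of length $n+1$ in $T_n$ are transitive. This is immediate from the definition of $T_n$, so there is no genuine obstacle. One point to verify is that the cited substitution lemma applies to a class that is not necessarily hereditary. Here $\mathcal{P}$ is not hereditary, but its substitution closure equals $\Forb(U_5)$, which is hereditary, so this should not cause trouble.
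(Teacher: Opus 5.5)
Your proof is correct and follows the paper's argument. Liu's theorem makes prime $U_5$-free tournaments $2$-dicolourable; you supply the explicit colouring of $T_n$ by two cyclic intervals, which the paper only calls ``easy''. Then $\Forb(U_5)$ lies in the substitution closure of that class, and the substitution theorem of Kim and of Aboulker et al.\ finishes the proof.

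On your heredity concern, the cleanest fix is to apply that theorem to the hereditary class of all tournaments with $\dic \le 2$. This class contains $\mathcal{P}$, so its substitution closure contains $\Forb(U_5)$.
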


We now present a new family of $\chr$-bounding ordered graphs. For every integer $n \ge 4$, we denote by $M_n$ the ordered graph with $n$ vertices, say $v_1, \dots, v_n$ in order, and two edges $v_1v_{n-1}$ and $v_2v_n$. Note that $M_4$ is the ordered graph that is proven to be $\chr$-bounding in \cite{cross}. We generalize their proof, which is based on the result hereunder, to show that all ordered graphs $M_n$ are $\chr$-bounding. Given two integers $\ell$ and $m$, we call $(\ell,m)$-banana any graph made of $m$ internally disjoint paths of length at least $\ell$ with the same endpoints.

\begin{theorem}[Scott \& Seymour \cite{Banana}]\label{thm:banana}
    For all integers $k,\ell,m \ge 0$, there exists an integer $f(k,\ell,m) \ge 0$ such that every graph $G$ with chromatic number $\chr(G) \ge f(k,\ell,m)$ contains a clique of size $k$ or an induced $(\ell,m)$-banana.
\end{theorem}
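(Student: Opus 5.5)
The plan is to argue by induction on $k$, in the style of the Scott--Seymour ``induced subgraphs of graphs with large chromatic number'' series. For $k \le 2$ the statement is trivial, since an edgeless graph has chromatic number at most $1$. For the induction step, fix $k,\ell,m$ and let $G$ have no clique of size $k$. Then every neighbourhood $N(v)$ is $K_{k-1}$-free. By induction, $N(v)$ either has chromatic number below $f(k-1,\ell,m)$ or contains an induced $(\ell,m)$-banana, and in the latter case we are done. So we may assume every ball of radius $1$ has bounded chromatic number.

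The heart of the argument is to upgrade this to: every ball of radius $\rho$ (with $\rho$ depending on $\ell$ and $m$, roughly $\rho \approx 4\ell m$) has chromatic number at most some $c$. I would try to achieve this by the standard ``$\rho$-control'' dichotomy. Suppose some ball $B(v,\rho)$ has huge chromatic number. Take BFS layers $L_0,\dots,L_\rho$ from $v$; some layer $L_i$ has huge chromatic number. The Gy\'arf\'as-path and levelling technique then produces inside $B(v,\rho)$ a structure such as a long induced path with a high-chromatic set attached only at its end. Iterating this gives $m$ induced paths of length at least $\ell$ from $v$ that reach a common high-chromatic region and are pairwise anticomplete except at their ends. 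Closing them up through a connected piece of that region yields a second common endpoint. I expect this step to be the main obstacle: the paths must remain pairwise anticomplete, and the closing vertex must be found without creating chords. That is precisely where the layered ``multicover'' argument is needed. Concretely, I would first find a stable set $X \subseteq L_{i-1}$ and a set $Y \subseteq L_i$ of large chromatic number such that each vertex of $Y$ has a neighbour in $X$. Then I would pass to $m$ sub-multicovers whose $X$-parts are pairwise far apart.

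If instead all $\rho$-balls have chromatic number at most $c$ while $\chi(G)$ is huge, I would use distance layering from a vertex. Partitioning the BFS layers by residue modulo $2\rho+1$ shows that some union of layers that are pairwise at distance more than $2\rho$ has huge chromatic number, and we restrict to one connected component of it. Because balls are small, we can then find $m$ vertices $x_1,\dots,x_m$ pairwise at distance greater than $2\ell$, each joined by a shortest, hence induced, path to a common high-chromatic connected set $C$ lying far from all of them. Choosing $C$ minimal and taking shortest paths $P_j$ from $x_j$ to a single vertex $b$ of $C$, chosen via a minimal connected subgraph meeting all the attachment points, the paths are long because the $x_j$ are far apart. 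Shortest-path arguments show the paths are internally anticomplete. Finally, joining the $x_j$ to a common vertex $a$ taken from a lower layer that covers them all gives the two endpoints $a,b$ of an induced $(\ell,m)$-banana. Setting $f(k,\ell,m)$ large enough relative to $c$, $f(k-1,\ell,m)$ and $\rho$ completes the induction.
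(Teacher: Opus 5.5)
The paper does not prove this statement; it quotes it from Scott and Seymour. Judged on its own, your outline uses a reasonable framework: induction on $k$ to bound $\chi$ on neighbourhoods, then a split according to the chromatic number of $\rho$-balls. But the step that actually produces the banana is missing. A useful sanity check: for $\ell \ge 2$, an induced $(\ell,2)$-banana is exactly a hole of length at least $2\ell$. So already the case $m=2$ is Gy\'arf\'as's long-holes conjecture, which Chudnovsky, Scott and Seymour proved in a long paper; an argument that handles every $m$ in a few lines is skipping the hard part. In your first case you say yourself that the main obstacle is getting $m$ long, pairwise anticomplete paths from $v$ into a common region and then ``closing them up'' without chords. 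You only name the multicover technique without running it. Iterating Gy\'arf\'as paths does not by itself keep later paths anticomplete to earlier ones while they still start at $v$, and nothing makes the paths end at one common vertex. The claimed upgrade from bounded $\chi$ on $1$-balls to bounded $\chi$ on $\rho$-balls is likewise asserted rather than proved.

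The concrete devices in your second case do not work. First, a minimal connected subgraph of $C$ meeting the attachment points need not be a subdivided star. For $m=3$ it can be three paths hanging from a triangle; for $m \ge 4$ it can be an H-shaped subdivided tree with two branch vertices. In the H-shaped case, whichever $b$ you choose, two routes share the middle segment, so there is no vertex $b$ where all $m$ routes meet while staying internally disjoint and anticomplete. Second, shortest paths from different $x_j$ to the same $b$ typically coalesce, and being geodesics says nothing about edges between two different paths, so no shortest-path argument gives anticompleteness. Third, if a single vertex $a$ is adjacent to every $x_j$, then $d_G(x_i,x_j) \le 2$, which contradicts your choice of the $x_j$ at pairwise distance more than $2\ell$. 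If instead distances are measured inside the layer, you still give no reason why such an $a$ exists, nor why $a$ has no neighbours in the interiors of the $P_j$ or at $b$. Merging $m \ge 3$ long anticomplete paths at two common ends is precisely the content of the theorem, and the proposal does not supply a way to do it.
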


We also use the following classical result of Ramsey.

\begin{theorem}[Ramsey \cite{Ramsey}]\label{thm:Ramsey}
    For every positive integers $n$ and $c$, there exists an integer $R=R(n, c)$ such that for all $c$-edge-colorings of the $R$-clique $K_R$, there is a set $X \subseteq V(K_R)$ of $n$ vertices such that $K_R[X]$ is monochromatic.
\end{theorem}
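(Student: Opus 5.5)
Since Ramsey's theorem is a classical result that the paper quotes rather than proves, I will give the standard double-induction argument via two-colour Ramsey numbers, and then extend it to $c$ colours by merging colour classes.

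For two colours I will prove the off-diagonal statement $R(s,t) \le R(s-1,t) + R(s,t-1)$ by induction on $s+t$. The base cases are $R(s,1)=R(1,t)=1$, or equivalently $R(s,2)=s$. For the induction step, fix a vertex $v$ of $K_N$ with $N = R(s-1,t)+R(s,t-1)$, and split the other $N-1$ vertices according to the colour of their edge to $v$. By pigeonhole, either at least $R(s-1,t)$ of them are joined to $v$ in colour $1$, or at least $R(s,t-1)$ of them are joined in colour $2$. In the first case, induction applied inside that neighbourhood gives one of two outcomes. It may give a colour-$1$ clique of size $s-1$, and adding $v$ extends it to size $s$. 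Otherwise it gives a colour-$2$ clique of size $t$ directly. The second case is symmetric. This yields the finite bound $R(s,t) \le \binom{s+t-2}{s-1}$.

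For $c \ge 3$ colours I will induct on $c$. Given a $c$-edge-colouring, I merge colours $c-1$ and $c$ into a single colour, which produces a $(c-1)$-colouring. I then take $R(n,c) \le R'(n,\dots,n,R(n,2))$, where $R'$ is the multicolour off-diagonal version, so I need to carry asymmetric targets $(n_1,\dots,n_c)$ through the induction. Equivalently, I can run the same pigeonhole argument directly with $c$ colours: the recursion
\[
R(n_1,\dots,n_c) \le 1+\sum_{i=1}^{c}\big(R(n_1,\dots,n_i-1,\dots,n_c)-1\big)
\]
follows exactly as in the two-colour case. Fixing $v$, some colour class $i$ of its neighbourhood is large enough to invoke induction with target $n_i-1$ in colour $i$. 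Taking $n_i=n$ for all $i$ gives the statement.

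The only delicate point is bookkeeping. The induction must be set up on the asymmetric version $R(n_1,\dots,n_c)$, because the diagonal statement alone does not carry through the step. I will also handle the base case where some $n_i=1$: a single vertex counts as a monochromatic clique of size $1$ in any colour, so $R=1$ there.
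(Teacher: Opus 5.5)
The paper quotes this theorem from Ramsey without proof, so the standard neighbourhood-pigeonhole induction is the expected argument. Your two-colour part is correct: both the recursion $R(s,t)\le R(s-1,t)+R(s,t-1)$ and the binomial bound hold. The displayed multicolour recursion, however, is off by one and false as written.

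Fixing $v$ in $K_N$ leaves $N-1$ other vertices. Write $R_i = R(n_1,\dots,n_i-1,\dots,n_c)$. Pigeonhole forces some colour class of $v$'s neighbourhood to have size at least $R_i$ only if $N-1>\sum_i (R_i-1)$, i.e.\ $N\ge 2+\sum_i(R_i-1)$. With your choice $N=1+\sum_i(R_i-1)$, the other vertices can split with exactly $R_i-1$ in each class, and the step yields nothing. Concretely, your formula gives $R(2,2)\le 1$, but $K_1$ has no edge. It also gives $R(3,3)\le 5$, which is refuted by colouring a $5$-cycle in one colour and its complement in the other. For $c=2$ its right-hand side is one less than that of your own correct two-colour recursion, since $R(s-1,t)+R(s,t-1)=2+(R(s-1,t)-1)+(R(s,t-1)-1)$. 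The repair is immediate: use $R(n_1,\dots,n_c)\le 2-c+\sum_i R_i$. Since only finiteness is needed, the rest of your argument then goes through unchanged.

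Two smaller points. First, your claim that the diagonal statement cannot carry the induction holds only for the direct pigeonhole route. The colour-merging route works diagonally: applying the $(c-1)$-colour statement with target $R(n,2)\ge n$ gives $R(n,c)\le R\bigl(R(n,2),c-1\bigr)$. Second, you use $R(\cdot,2)$ in two incompatible senses: the off-diagonal $R(s,2)=s$, and the diagonal two-colour number $R(n,2)$ of the theorem inside $R'(n,\dots,n,R(n,2))$. Under the first reading that bound would be false, so the two notations should be kept separate.
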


We are now ready to prove our theorem.

\begin{theorem}\label{thrm:Mn}
    For every integer $n\ge4$, the ordered graph $M_n$ is $\chr$-bounding.
\end{theorem}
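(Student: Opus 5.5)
Let $G$ be an $M_n$-free ordered graph with $\omega(G) < k$. We will show that $\chr(G)$ is bounded by a function of $k$ and $n$. The plan is to apply Theorem~\ref{thm:banana} with suitable parameters $\ell$ and $m$, and then derive a contradiction from a large induced banana. So suppose $\chr(G) \ge f(k,\ell,m)$ with $\ell$ and $m$ to be chosen. Then $G$ contains an induced $(\ell,m)$-banana with endpoints $a,b$ and internally disjoint induced paths $P_1,\dots,P_m$, each of length at least $\ell$. Since the banana is induced, there are no edges between the interiors of distinct paths. The goal is to find, inside the union of two of these paths, two edges $xy$ and $x'y'$ positioned as $x \prec x' \prec \cdots \prec y \prec y'$ with at least $n-4$ further vertices strictly between $x'$ and $y$, and with $xy'$, $x'y$, $xx'$, $yy'$ all non-edges. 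The extra vertices are only needed to fill the slots $v_3,\dots,v_{n-2}$; they must be non-adjacent to each other and to $x,x',y,y'$.

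First I would discard the endpoints $a,b$ and work with path interiors. The interiors are pairwise anticomplete, so an edge from one path and an edge from another are automatically non-adjacent to each other. The idea, following the proof for $M_4$ in \cite{cross}, is as follows. Each path $P_i$ is connected and runs between the same endpoints, so in the ordering it spans an interval $I_i$ of positions. Two paths whose spans overlap in a suitably nested or interleaved way must contain a crossing pair of edges. Concretely, if $P_i$ has vertices on both sides of some vertex $w$ of $P_j$, then some edge $e$ of $P_i$ jumps over $w$. Walking along $P_j$ from $w$, we then find an edge of $P_j$ that crosses $e$ in the pattern $v_1 v_{n-1}$, $v_2 v_n$, unless $P_j$ stays inside the jump of $e$.

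To control the configuration, I would use Theorem~\ref{thm:Ramsey}. To each pair $i<j$ of paths, assign a colour recording the relative order type of a bounded amount of data: the first and last vertices of each path interior, and the extremal (leftmost and rightmost) vertices of each path. Taking $m$ large gives many paths that pairwise realize the same order type. I would then do a case analysis.

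\textbf{Case 1: the spans are essentially disjoint.} Many paths would then lie in pairwise disjoint intervals. This is impossible, because all paths are attached to the common endpoints $a$ and $b$, so each path must reach near $a$ and near $b$. Hence the spans of all the paths overlap substantially.

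\textbf{Case 2: the spans are pairwise overlapping.} Here I expect to find crossing edges $xy'$ from $P_i$ and $x'y$ from $P_j$ with $x\prec x'\prec y\prec y'$. For the filler, the $n-4$ middle vertices would be taken from interiors of other paths $P_r$ that have a vertex strictly between $x'$ and $y$. Such vertices are non-adjacent to everything chosen from $P_i$ and $P_j$, and they are non-adjacent to each other if each comes from a different path.

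The main obstacle is making sure that the gap between $x'$ and $y$ actually contains vertices of $n-4$ other paths. The crossing edges might be "short", with consecutive positions. I would try first to choose the crossing pair extremally: take $P_i,P_j$ in the Ramsey-homogeneous family, and pick the crossing edges as the ones straddling a vertex of a third path $P_r$. Since all homogeneous paths overlap, a vertex of each of $n-4$ further paths should be trapped inside the crossing window by homogeneity. The long path length $\ell$ ensures that the order-type data do not degenerate, since each path has many interior vertices. Once an induced $M_n$ is found, this contradicts $M_n$-freeness, so $\chr(G) < f(k,\ell,m)$, giving $\chr$-boundedness.
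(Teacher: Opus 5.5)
Your framework (Theorem~\ref{thm:banana} plus Ramsey on pairs of paths) is the right one, but the case analysis has a genuine gap.

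\textbf{Case 1 is not impossible, and the endpoints cannot be discarded.} In an ordered graph an edge can join vertices that are arbitrarily far apart in the ordering, so nothing forces a path interior to lie ``near'' $a$ or $b$. The interiors can occupy pairwise disjoint intervals, e.g.\ $P_1\prec P_2\prec\cdots\prec P_m\prec a\prec b$. Your Case 2 also mishandles the nested configuration $P_1^-\prec P_2^-\prec\cdots\prec P_m\prec\cdots\prec P_2^+\prec P_1^+$. A path lying entirely inside a gap of another produces no crossing pair at all; this is exactly your own exception ``unless $P_j$ stays inside the jump of $e$''. In both configurations the union of the interiors can be completely crossing-free. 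Order each interior monotonically along the path; in the nested case, allow only one long edge per path jumping from $P_i^-$ to $P_i^+$. Since $M_n$ consists of two crossing edges, the interiors then contain no $M_n$ whatsoever.

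The paper handles these cases precisely by using the endpoints. In the separated case it takes $n-2$ interiors in the same position relative to both endpoints. It picks, in distinct paths, one vertex adjacent only to one endpoint, one adjacent only to the other, and fillers adjacent to neither. Paths of length at least $4$ guarantee these exist. In the nested case it combines nested edges $u_iv_i$ that avoid one endpoint with a vertex of an outer path adjacent to that endpoint.

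\textbf{The filler step in the crossing case is unproved.} Your Ramsey colour records only extremal vertices of each path. The crossing edges are not among these data, so homogeneity says nothing about which vertices fall between $x'$ and $y$. The paper obtains the fillers as follows:
\begin{itemize}
\item It fixes the first vertex $a_{5n}$ of one path and, in every other path, an edge $u_iv_i$ with $u_i\prec a_{5n}\prec v_i$.
\item If two of these edges cross, the vertices $u_{4n+1},\dots,u_{5n-1},a_{5n}$ automatically lie in the window and serve as fillers.
\item If none cross, the edges are nested. A further argument then shows that $u_{2n}v_{2n}$ is crossed by edges of $2n-1$ other paths, which yields the copy.
\end{itemize}
Finally, a small labelling slip: with $x\prec x'\prec y\prec y'$, the edges $xy'$ and $x'y$ are nested, not crossing. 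You want $xy$ and $x'y'$, as in your own setup.
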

\begin{proof}
    Let $R = R(5n,3)$. Consider a $(4,R)$-banana $B$ with a vertex-ordering $\prec$. We show that $M_n$ is an induced ordered subgraph of $(B,\prec)$, then the desired result follows from \Cref{thm:banana}. The graph $B$ is made of $R$ paths of length at least 4 with endpoints $x$ and $y$. Denote $P_1, \dots, P_R$ the interior of these paths. Each $P_i$ has length at least 2, and in particular, contains a vertex that is adjacent to $x$ but not $y$, a vertex that is adjacent to $y$ but not $x$, and a vertex that is non-adjacent to both $x$ and $y$. We assign to every pair of paths $P_i,P_j$ one of three colours: 
    \begin{enumerate}
        \item[\color{Green}green] if $P_i$ is next to $P_j$, that is, $P_i \prec P_j$ or $P_j \prec P_i$;
        \item[\color{blue}blue] if $P_i$ surounds $P_j$, that is, there is bipartition $P_i^- \cup P_i^+$ of $P_i$ such that $P_i^- \prec P_j \prec P_i^+$, or if $P_j$ surounds $P_i$;
        \item[\color{red}red] if $P_i$ crosses $P_j$, that is, $\exists u_i,v_i \in P_i$ and $u_j,v_j \in P_j$ such that $u_i \prec u_j \prec v_i \prec v_j$ or $u_j \prec u_i \prec v_j \prec v_i$.
    \end{enumerate}
    This describes a 3-edge-colouring of a clique on $R$ vertices. By \Cref{thm:Ramsey}, there exists a colour $c \in \{ \text{{\color{Green}green}, {\color{blue}blue}, \color{red}red} \}$ ad a set of $5n$ paths $Q_1, \dots, Q_{5n}$ such that every pair among them is assigned the colour $c$.

    If $c=\text{\color{Green}green}$, then without loss of generality, we may assume that $Q_1 \prec \dots \prec Q_{5n}$. Each path $Q_i$, except at most one, is such that $x \prec Q_i$ or $Q_i \prec x$. The same folds for $y$. Hence $5n-2$ of these paths are either before $x$ and $y$, between $x$ and $y$, or after $x$ and $y$. By the pigeonhole principle, there are $\lceil \frac{5n-2}3 \rceil \ge n-2$ paths, say $S_1 \prec \dots \prec S_{n-2}$, in the same position relatively to $x$ and $y$. In each case, it is easy to build a set inducing a copy of $M_n$ by picking $x$, $y$, and in each path $S_i$, the vertex that is adjacent to $x$ but not $y$, or the vertex that is adjacent to $y$ but not $x$, or a vertex that is non-adjacent to both $x$ and $y$. See the figure below for examples.
    
    \begin{figure}[h]
        \centering
        \begin{tikzpicture}[
            bullet/.style={circle, fill, inner sep=2pt},
            rect/.style={rounded corners, Green, fill=Green!20}
            ]
            \draw[rect] (-.3,-.3) rectangle (0.3,.3) ;
            \draw[rect] (0.7,-.3) rectangle (1.3,.3) ;
            \draw[rect] (1.7,-.3) rectangle (2.3,.3) ;
            \draw[rect] (3.7,-.3) rectangle (4.3,.3) ;
            \node at (0,-.6) {\color{Green}$S_1$} ;
            \node at (1,-.6) {\color{Green}$S_2$} ;
            \node at (2,-.6) {\color{Green}$S_3$} ;
            \node at (4,-.6) {\color{Green}$S_{n-2}$} ;

            \node[bullet] (u1) at (0,0) {} ;
            \node[bullet] (u2) at (1,0) {} ;
            \node[bullet] (u3) at (2,0) {} ;
            \node at (3,0) {$\dots$} ;
            \node[bullet] (un) at (4,0) {} ;
            \node (x) at (5,0) {$x$} ;
            \node (y) at (6,0) {$y$} ;
            \draw (x) to[bend right] (u1) ;
            \draw (y) to[bend right] (u2) ;

            \draw[thick] (7,-1) -- (7,1) ;

            \draw[rect] ( 8.7,-.3) rectangle ( 9.3,.3) ;
            \draw[rect] ( 9.7,-.3) rectangle (10.3,.3) ;
            \draw[rect] (11.7,-.3) rectangle (12.3,.3) ;
            \draw[rect] (12.7,-.3) rectangle (13.3,.3) ;
            \node at ( 9,-.6) {\color{Green}$S_1$} ;
            \node at (10,-.6) {\color{Green}$S_2$} ;
            \node at (12,-.6) {\color{Green}$S_{n-3}$} ;
            \node at (13,-.6) {\color{Green}$S_{n-2}$} ;

            \node (x) at (8,0) {$x$} ;
            \node[bullet] (v1) at (9,0) {} ;
            \node[bullet] (v2) at (10,0) {} ;
            \node at (11,0) {$\dots$} ;
            \node[bullet] (v3) at (12,0) {} ;
            \node[bullet] (vn) at (13,0) {} ;
            \node (y) at (14,0) {$y$} ;
            \draw (x) to[bend left] (vn) ;
            \draw (y) to[bend right] (v1) ;
        \end{tikzpicture}
        \label{fig:green}
    \end{figure}

    If $c=\text{\color{blue}blue}$, then each $Q_i$ has a bipartition $Q_i^- \cup Q_i^+$ such that, without loss generality:
    \[
        Q_1^- \prec Q_2^- \prec \cdots \prec Q_{5n}^- \prec Q_{5n}^+ \prec \cdots \prec Q_2^+ \prec Q_1^+.
    \]
    Because $Q_i$ is connected, there is an edge $u_iv_i$ with $u_i \in Q_i^-$ and $v_i \in Q_i^+$. For each $i \in [5n]$, because $Q_i$ has length at least 2, either $x$ or $y$ is non-adjacent to $u_iv_i$. By pigeonhole principle, at least $\lceil \frac{5n}{2} \rceil \ge 2n$ of these edges are non-adjacent to the same end, say $x$ wlog. Denote these edges $u_1'v_1', \dots, u_{2n}'v_{2n}'$ so that we have
    \[
        u_1' \prec \cdots \prec u_{2n}' \prec v_{2n}' \prec \cdots \prec v_1'.
    \]
    We illustrate in the next figure how to build of set of vertices inducing of $M_n$ by distinguishing two cases, whether $x$ is between $u_n'$ and $v_n'$ (left in the figure), or not (right in the figure).
        
    \begin{figure}[h]
        \centering
        \begin{tikzpicture}[
            bullet/.style={circle, fill, inner sep=2pt},
            rect/.style={rounded corners, blue, fill=blue!20}
            ]
            \draw[rect] (-.3,-.3) rectangle (.3,.3) ;
            \node (Q1) at (0,-.6) {\color{blue}$Q_1$} ;
            \node[bullet] (z) at (0,0) {} ;
            \node (u2) at (1,0) {$u_2'$} ;
            \node (u3) at (2,0) {$u_3'$} ;
            \node (u) at (3,0) {$\dots$} ;
            \node (un) at (4,0) {$u_n'$} ;
            \node (x) at (5,0) {$x$} ;
            \node (v2) at (6,0) {$v_2'$} ;
            \draw (z) to[bend left] (x) ;
            \draw (u2) to[bend left] (v2) ;

            \draw[thick] (7,-1) -- (7,1) ;

            \draw[rect] (8.7,-.3) rectangle (9.3,.3) ;
            \node (Q4) at (9,-.6) {\color{blue}$Q_{5n}$} ;
            \node (un) at (8,0) {$u_n'$} ;
            \node[bullet] (z) at (9,0) {} ;
            \node (v2) at (10,0) {$v_{2n-1}'$} ;
            \node (v) at (11,0) {$\dots$} ;
            \node (v1) at (12,0) {$v_{n+1}'$} ;
            \node (vn) at (13,0) {$v_n'$} ;
            \node (x) at (14,0) {$x$} ;
            \draw (z) to[bend left] (x) ;
            \draw (un) to[bend left] (vn) ;
        \end{tikzpicture}
        \label{fig:blue}
    \end{figure}

    If $c = \text{\color{red}red}$, that is, the paths $Q_1, \dots, Q_{5n}$ pairwise cross. For each $i \in [5n]$, let $a_i$ be the first vertex of $Q_i$ in the ordering, and without loss of generality, assume that $a_1 \prec \cdots \prec a_{5n}$. The path $Q_{5n}$ crosses all other paths, in particular, every path has a vertex after $a_{5n}$. For $i \in [5n-1]$, the path $Q_i$ is connected and have vertices before and after $a_{5n}$, hence there is an edge $u_iv_i \in Q_i$ such that $u_i \prec a_{5n} \prec v_i$. We renumber all paths except $Q_{5n}$ so that now $u_1 \prec \cdots \prec u_{5n-1}$. For $1 \le i < j \le 4n$, if the edges $u_i$ and $v_j$ cross ({\it i.e.} $u_i \prec u_j \prec v_i \prec v_j$), then the vertices $u_i, u_j, u_{4n+1}, \dots, u_{5n-1}, a_{5n}, v_i, v_j$ induce a copy of $M_n$. Therefore, it holds that $v_{4n} \prec \cdots \prec v_1$.
    
      Let us now show that the edge $u_{2n}v_{2n}$ is crossed by a at least $2n-1$ paths. If $Q_{2n+1}, \dots, Q_{4n}$ all cross $u_{2n}v_{2n}$ we are done, so suppose there is an index $i \in [2n+1,4n]$ such that $Q_i$ does not cross $u_{2n}v_{2n}$. Because $u_i \in Q_i$ is between $u_{2n}$ and $v_{2n}$, the entirety of $Q_i$ lies between $u_{2n}$ and $v_{2n}$. However, the paths $Q_1, \dots Q_{2n-1}$ cross $Q_i$, thus they have a vertex between $u_{2n}$ and $v_{2n}$, so they all cross the edge $u_{2n}v_{2n}$. Let $x_1 y_1,\cdots, x_{2n-1} y_{2n-1}$ be edges from different paths (in particular they are disjoint and non adjacent) that cross $u_{2n}v_{2n}$. We name the vertices so that $u_{2n} \prec x_1 \prec \cdots \prec x_{2n-1} \prec v_{2n}$. If $y_n \prec u_{2n}$, then $y_n, u_{2n}, x_4, \cdots, x_n, v_{2n}$ induce a copy of $M_n$, otherwise $u_{2n}, x_n,\cdots, x_{2n-4}, v_{2n}, y_n$ induce a copy of $M_n$.
\end{proof}

Further, Bria\'nski, Davies, and Walczak \cite{matchings} announced a proof that all ordered matchings (graphs with max degree 1) are $\chr$-bounding. This result generalizes \Cref{thrm:Mn} and implies that every tournament with a backedge graph that is a matching is $\dic$-bounding. We next deduce another conjecture of Aboulker, Aubian, Charbit, and Lopes \cite{AACL23}. Two other proofs of the same statement were independently found by Tang and Zhang using LLM \cite{chaoliang}. We refer to \cite{twinwidth} for the definition of the twin-width of a tournament.

\begin{theorem}\label{thm:twinwidth}
    Let $\cal T$ be a class of tournaments. If $\cal T$ has bounded twin-width, then $\cal T$ is $\dic$-bounded.
\end{theorem}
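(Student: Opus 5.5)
The plan is to use the known structural description of tournaments of bounded twin-width: a class $\cal T$ has bounded twin-width if and only if there is an integer $k$ such that every $T \in \cal T$ has an ordering $<_T$ whose adjacency matrix excludes a fixed $k$-grid (equivalently, $T^{<_T}$ is an ordered graph avoiding some fixed ordered pattern). Following \cite{twinwidth}, bounded twin-width tournaments are exactly those excluding some fixed ordered "pattern", and in particular a class of bounded twin-width excludes, for some $n$, a tournament whose backedge graph under some ordering is an ordered matching. Concretely, I would aim to exhibit, for each $t$, a tournament $H_t$ such that $H_t$ has twin-width large in terms of $t$ (so any class of bounded twin-width is $H_t$-free for some $t$), while $H_t$ has an ordering $\prec$ with $H_t^\prec$ an ordered matching, or even built from the $M_n$ of \Cref{thrm:Mn} together with the concatenation and nesting operations of \cite{patterns}.

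The key steps are, in order. First, recall that tournaments of twin-width at most $d$ form a hereditary class, and that the class of all tournaments has unbounded twin-width witnessed by explicit families; I would pick the family of tournaments whose backedge graph (in the natural order) is a "crossing" grid-like matching, e.g. the tournament $H_t$ on $2t$ vertices $a_1 \prec \dots \prec a_t \prec b_1 \prec \dots \prec b_t$ whose only backward arcs are $b_{\pi(i)} a_i$ for a permutation $\pi$ chosen so that the matching encodes a $t$-grid-rich permutation. Since every permutation is contained in a tournament of this form and tournaments encode permutations with twin-width comparable to the permutation's, a bounded twin-width class must omit $H_t$ for $t$ large. Second, by Proposition~\ref{prop:backedge_bounding}, it suffices that the ordered matching $H_t^\prec$ is $\chr$-bounding, which is exactly the announced result of Bria\'nski, Davies, and Walczak \cite{matchings} that all ordered matchings are $\chr$-bounding. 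Hence $\Forb(H_t)$ is $\dic$-bounded, and $\cal T \subseteq \Forb(H_t)$ is $\dic$-bounded.

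The main obstacle is the first step: verifying rigorously that a class of bounded twin-width excludes some tournament with a matching backedge graph. I would try to derive it from the grid characterization in \cite{twinwidth}: if every tournament of $\cal T$ contained all $H_t$, one can produce, in any ordering, a large "mixed" minor in the adjacency matrix, contradicting bounded twin-width; more simply, I would use that bounded twin-width classes are small (at most $c^n n!$ labelled members on $n$ vertices) and not all permutation-encoding tournaments can belong to it, combined with the fact that containing $H_t$ for all permutations of size $t$ would force growth rate too large. If avoiding reliance on \cite{matchings} is desired, an alternative is to choose $H_t$ with backedge graph built from copies of $M_n$ via nesting and concatenation, but then proving such $H_t$ have unbounded twin-width is the delicate part.
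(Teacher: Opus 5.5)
Your proposal is correct and takes essentially the paper's approach. The paper likewise uses Geniet--Thomass\'e to obtain that a bounded twin-width class avoids some tournament of the family $\mathcal F_=$, which is exactly your permutation tournaments $H_\pi$ with a matching backedge graph, and then concludes via \Cref{prop:backedge_bounding} and the Bria\'nski--Davies--Walczak theorem on ordered matchings; the paper only cites the exclusion step, whereas you also sketch a counting/smallness verification of it.
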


\begin{proof}
    Geniet and Thomass\'e \cite{twinwidth} proved that there are three (explicit) classes of tournaments $\cal F_=$, $\cal F_\le$, and $\cal F_\ge$ such that a class of tournament $\cal T$ has bounded twin-width if and only if it there are members of each classes $\cal F_=$, $\cal F_\le$, and $\cal F_\ge$ that are subtournaments of no tournaments in $\cal T$. In particular, if a class of tournament $T$ has bounded twin-width, then it is included in $\Forb(H)$ for some tournament $H \in \cal F_=$. Moreover, the tournaments in the class $\cal F_=$ all have a backedge graph that is a matching \cite{twinwidth}, so by \Cref{prop:backedge_bounding} and the aforementioned result of Davies, Bria\'nski, and Walczak, the tournament $H$ is $\dic$-bounding. Therefore, $\Forb(H)$ and $\cal T$ are $\dic$-bounded.
\end{proof}

It is open whether this is true for classes of digraphs in general. Moreover, in the undirected case, a graph class $\cal G$ with bounded twin-width is polynomially $\chr$-bounded \cite{bourneuf} (that is, there is a polynomial function $f$ such that $\chr(G) \le f(\omega(G))$ for all $G \in \cal G$). We may wonder if this holds for classes of tournaments as well.

\subsection{Non $\dic$-bounding tournaments}

The goal of this section is to discover non $\dic$-bounding tournaments. For this purpose, we need tournaments with bounded directed clique number and large dichromatic number. The next inequality help us achieve that.

\begin{proposition}[Nguyen, Scott, \& Seymour \cite{NSS}]\label{prop:ineq}
    Let $T$ be a tournament. For every ordering $\prec$ of $T$, we have $\chr(T^\prec) \le \dic(T) \cdot \ome(T^\prec)$.
\end{proposition}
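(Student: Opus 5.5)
Set $k = \dic(T)$ and fix an optimal dicolouring $V(T) = V_1 \cup \dots \cup V_k$ into acyclic sets. The goal is to colour $T^\prec$ properly with at most $k\cdot\ome(T^\prec)$ colours. We do this one class at a time: we show that each $T^\prec[V_i]$ can be properly coloured with $\ome(T^\prec)$ colours, and we use pairwise disjoint palettes for distinct $i$. The union of these colourings is then a proper colouring of $T^\prec$ with at most $k\cdot \ome(T^\prec)$ colours.

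So everything reduces to a single acyclic class. Let $A$ be one of the sets $V_i$. Since $T[A]$ is an acyclic tournament, it is transitive, so it has a unique topological ordering $<_A$ in which every arc goes forward. Two vertices $u,v \in A$ are adjacent in $T^\prec$ exactly when the arc between them is backward with respect to $\prec$. By the definition of $<_A$, this happens exactly when the orders $\prec$ and $<_A$ disagree on the pair $\{u,v\}$. Hence $T^\prec[A]$ is the permutation graph (inversion graph) of the two linear orders $\prec$ and $<_A$ on $A$.

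Next we use that permutation graphs are perfect, via an explicit colouring. In an inversion graph, a clique is a set of elements on which the two orders are reversed. A stable set is a chain on which the orders agree. By Mirsky's theorem (the dual of Dilworth's theorem), the poset on $A$ given by the intersection of $\prec$ and $<_A$ can be partitioned into as many chains as the size of its largest antichain. These chains are stable sets in $T^\prec[A]$, and the antichains are cliques. Concretely, we give each $v$ the colour equal to the length of the longest sequence $v_1,\dots,v_r=v$ with each step decreasing in one order and increasing in the other. Two vertices with the same colour are then comparable in both orders the same way, so they are non-adjacent. This yields $\chr(T^\prec[A]) = \ome(T^\prec[A]) \le \ome(T^\prec)$.

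Combining the per-class bounds over the $k$ classes gives $\chr(T^\prec) \le \dic(T)\cdot\ome(T^\prec)$. There is no real obstacle here; the only point needing care is the identification of $T^\prec[A]$ as the inversion graph of $(\prec,<_A)$, which uses that $T$ is a tournament, so that every pair in $A$ is joined by an arc that is forward in $<_A$.
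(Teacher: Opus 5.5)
Your proof is correct. The paper gives no proof of its own and simply quotes Nguyen, Scott, and Seymour, so there is no in-paper argument to compare against. Your argument is the standard one: each class of an optimal dicolouring is transitive, so it induces in $T^\prec$ the inversion graph of $\prec$ and its topological order. That graph is perfect, and using disjoint palettes gives the factor $\dic(T)$. You are also right that the tournament hypothesis is essential: for a general digraph, $D^\prec[A]$ is only a subgraph of that inversion graph.

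One small slip in attribution: ``partition into as many chains as the size of the largest antichain'' is Dilworth's theorem (applied to $\prec \cap <_A$), not Mirsky's. Your explicit colouring is instead Mirsky's theorem, applied to the poset in which $u$ precedes $v$ when $u \prec v$ and $v <_A u$; its chains are exactly the cliques. For that colouring to work, every step of the sequence must go up in $\prec$ and down in $<_A$, always in this fixed direction.
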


We construct tournaments with bounded directed clique number and large dichromatic number as follows. Let $G$ be an ordered graph with clique number $\ome(G)=2$ and chromatic number $\chr(G)=k$, and let $T$ be the tournament that admits $G$ has a backedge graph. Then, we have $\diomega(T)=2$ and $\dic(T) \ge \frac k 2$ by \Cref{prop:ineq}. Similarly to the graph case, every $\dic$-bounding tournament must be a forest in the following sense:

\begin{proposition}[Aboulker, Aubian, Charbit, \& Lopes \cite{AACL23}]
    Let $H$ be a tournament. If $H$ is $\dic$-bounding, then there exists an ordering $\prec$ of $H$ such that $H^\prec$ is a forest.
\end{proposition}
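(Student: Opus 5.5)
We argue by contraposition: assume that every ordering $\prec$ of $H$ gives a backedge graph $H^\prec$ containing a cycle, and build $H$-free tournaments with bounded directed clique number and arbitrarily large dichromatic number. The natural source is the Erd\H{o}s construction \cite{girth}, used in its ordered form exactly as in the paragraph preceding the statement. Let $g = |V(H)|+1$ and $k \ge 1$. Take a graph $G$ with chromatic number at least $2k$ and girth greater than $g$ (so in particular $\ome(G) \le 2$). Fix an arbitrary ordering $<$ of $V(G)$, and let $T$ be the unique tournament with $T^{<} = G$.

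First, we bound the parameters of $T$. Since $<$ is an ordering of $T$, we get $\diomega(T) \le \ome(T^{<}) = \ome(G) \le 2$. By Proposition~\ref{prop:ineq} applied to the ordering $<$,
\[
2k \le \chr(G) = \chr(T^{<}) \le \dic(T)\cdot \ome(T^{<}) \le 2\,\dic(T),
\]
so $\dic(T) \ge k$. Letting $k$ vary, this gives a family of tournaments with $\diomega \le 2$ and unbounded $\dic$.

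It remains to show that $T$ is $H$-free, which is the key step. Suppose $X \subseteq V(T)$ induces a copy of $H$. Restrict $<$ to $X$; this is an ordering $\prec$ of $T[X] \cong H$. Its backedge graph $T[X]^{\prec}$ is exactly the induced subgraph $G[X]$, because backward arcs are determined pairwise by the ordering. By hypothesis, $H^\prec$ contains a cycle, which has length at most $|X| < g$. That cycle is then a cycle of $G$ of length less than its girth, a contradiction. Hence the class $\Forb(H)$ contains tournaments with $\diomega \le 2$ and arbitrarily large $\dic$, so $H$ is not $\dic$-bounding.

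The only point needing care is this last identification: every ordering of $H$ transported to the copy $X$ is realized as the restriction of $<$. This holds because "some ordering of $H$ has a forest backedge graph" is an isomorphism-invariant property, and restricting $<$ to $X$ yields some ordering of $H$. I expect no genuine obstacle beyond citing the existence of graphs with large girth and chromatic number \cite{girth}.
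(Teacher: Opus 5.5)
Your proof is correct and follows the paper's argument. You take an Erd\H{o}s graph of large girth and large chromatic number as the backedge graph of a tournament $T$, use Proposition~\ref{prop:ineq} to get $\diomega(T)\le 2$ with $\dic(T)$ large, and note that any copy of $H$ in $T$ would inherit a backedge graph that is an induced subgraph of $G$, hence a forest. The only difference is presentational: you argue by contraposition with $k$ varying, whereas the paper fixes the bounding function $f$ and takes $\chr(G) > 2f(2)$ to force a copy of $H$ in $T$.
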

\begin{proof}
    Suppose $H$ is $\dic$-bounding, and let $f$ be a function such that $\dic(T) \le f(\diomega(T))$ for all $H$-free tournaments $T$. Consider an ordered graph $G$ with clique number $\ome(G)=2$, chromatic number $\chr(G) > 2 f(2)$, and no cycle of length at most $|V(H)|$; such graphs exist by \cite{girth}. Let $T$ be the tournament that admits $G$ has a backedge graph. We have $\diomega(T)=2$ and $\dic(T)>f(2)$ by \Cref{prop:ineq}, hence $T$ is not $H$-free. Therefore some backedge graph $H^\prec$ of $H$ is an induced subgraph of $G$, so $H^\prec$ has no cycle of length at most $|V(H)|$, and thus it is a forest.
\end{proof}

Aboulker, Aubian, Charbit, and Lopes conjectured that the converse holds, that is, every tournament that has backedge graph which is a forest is $\dic$-bounding. In the rest of this section, we describe two families of tournaments that are not $\dic$-bounded, and use them to infer more non $\dic$-bounding tournaments.

\medskip

Let $(G,\prec)$ be an ordered graph and $\phi$ an embedding of $G$ in $\mathbb Z$, that is, a function $\phi \colon V(G) \to \mathbb{Z}$ which is increasing with respect to $\prec$. For each edge $uv \in E(G)$, we denote $\phi(uv) = |\phi(u) - \phi(v)|$. Given two positive integers $r$ and $s$, we say that two edges $e,f \in  E(G)$ are \emph{$(r,s)$-comparable} if $\frac 1 r \phi(f) \le \phi(e) \le r \cdot \phi(f)$ and there is a path of length at most $s$ connecting $e$ and $f$. We say that an ordered graph $G$ is \emph{$(r,s)$-incomparable} if it admits an embedding such that no pair of (distinct) edges is $(r,s)$-comparable. Note that if $G$ is $(r,s)$-incomparable, then $G$ is $(r',s')$-incomparable for all $r' \le r$ and $s' \le s$. This notion can be interpreted as a refinement of the girth (the minimum size of a cycle in a graph) for ordered graphs, as evidenced by the next lemma.

\begin{lemma}
    Let $r \ge 3$ be an integer and $G$ an $(r-1, \lfloor \frac r 2 \rfloor -1)$-incomparable ordered graph. Then, $G$ has no cycle of length at most $r$.
\end{lemma}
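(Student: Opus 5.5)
The plan is to argue by contradiction: from a short cycle we produce two distinct edges that are $(r-1,\lfloor r/2\rfloor -1)$-comparable under the given embedding. Fix an embedding $\phi$ witnessing that $G$ is $(r-1, \lfloor \frac r2 \rfloor - 1)$-incomparable. Suppose $G$ has a cycle $C = x_1 x_2 \cdots x_m x_1$ with $3 \le m \le r$, and write $e_i = x_i x_{i+1}$ (indices mod $m$).

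The key observation is a telescoping identity. Going once around the cycle, the signed displacements satisfy
\[
\sum_{i=1}^{m} \big(\phi(x_{i+1}) - \phi(x_i)\big) = 0 .
\]
Let $e$ be an edge of $C$ maximising $\phi(e)$, and set $L = \phi(e)$. Note $L > 0$, since $\phi$ is injective (it is strictly increasing with respect to $\prec$). By the identity, $L$ equals the absolute value of the sum of the other $m-1$ signed displacements. Hence $L \le \sum_{f \neq e} \phi(f)$, the sum running over the other edges $f$ of $C$. By averaging, some edge $f \ne e$ of $C$ satisfies $\phi(f) \ge \frac{L}{m-1} \ge \frac{L}{r-1}$. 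Combined with the maximality of $e$, this gives
\[
\tfrac{1}{r-1}\,\phi(f) \le \phi(f) \le \phi(e) = L \le (r-1)\,\phi(f),
\]
so the length condition of $(r-1,\cdot)$-comparability holds.

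It remains to connect $e$ and $f$ by a short path, and the natural candidate is to go along $C$. Removing the two edges $e$ and $f$ from $C$ leaves two paths joining an endpoint of $e$ to an endpoint of $f$ (one may have length $0$ if $e$ and $f$ share a vertex). Their lengths add up to $m-2$, so the shorter one has length at most $\lfloor \frac{m-2}{2} \rfloor = \lfloor \frac m2 \rfloor - 1 \le \lfloor \frac r2 \rfloor - 1$. Thus $e$ and $f$ are $(r-1, \lfloor \frac r2 \rfloor -1)$-comparable, contradicting the choice of $\phi$.

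There is no real obstacle here. The only points needing care are the convention that a path of length $0$ connects adjacent edges, and checking the parity bookkeeping in $\lfloor (m-2)/2\rfloor$. The averaging step is what produces the ratio $r-1$ rather than $r$, and the hypothesis $m \ge 3$ guarantees $m-1 \ge 2$, so that a second edge $f$ exists.
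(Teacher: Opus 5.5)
Your proof is correct and is essentially the paper's argument. Both bound a sum of signed displacements along the cycle by (number of edges)$\times$(longest edge) to obtain two distinct edges of $C$ whose $\phi$-lengths are within a factor $r-1$ of each other, and then join them through the shorter remaining arc of $C$, of length at most $\lfloor m/2\rfloor-1$. The only difference is how the pair is chosen: the paper splits $C$ at its leftmost and rightmost vertices $x,y$ and takes the longest edge on each $x$--$y$ arc (both lie between $M/(r-1)$ and $M=\phi(y)-\phi(x)$), whereas you take the globally longest edge and find the second by averaging in the closed-walk identity, which works equally well.
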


\begin{proof}
    Suppose that $G$ contains a cycle $C$ of length at most $r$, and let $\phi$ be an embedding of $G$ in $\mathbb Z$. Denote respectively $x$ and $y$ the first and the last vertices of $C$ in the ordering, and let $M = \phi(y) - \phi(x)$. For every edge $uv \in C$, we have $\phi(uv) = |\phi(v)-\phi(u)| \le \max \phi(C) - \min \phi(C) = M$. The vertices $x$ and $y$ are connected in $C$ by two disjoint paths $P = u_0 \cdots u_k$ and $P'$ of length at most $r-1$. Let $e$ be an edge of $P$ maximizing $\phi$. Then
    \[
        \varphi(e) \le M = \phi(y) - \phi(x) = \sum_{i=1}^k \phi(u_i) - \phi(u_{i-1}) \le \sum_{i=1}^k \phi(u_{i-1}u_i) \le k \cdot \phi(e) \le (r-1) \phi(e).
    \]    
    Likewise for $f$ an edge of $P'$ maximizing $\phi$, we have $\varphi(f) \le M \le (r-1) \phi(f)$. It follows that $\frac{1}{r-1} \phi(f) \le \phi(e) \le (r-1) \phi(f)$. Moreover, the edges $e$ and $f$ are connected by two disjoint paths consisting of the remaining $r-2$ edges of $C$. One of these two paths has length at most $\lfloor \frac{r-2}{2} \rfloor$, hence $e$ and $f$ are $(r-1, \lfloor \frac r 2 \rfloor -1)$-comparable.
\end{proof}

In particular, the triangle is not $(2,0)$-incomparable, so for all $r\ge2$ and $s\ge0$, every $(r,s)$-incomparable ordered graph has clique number at most 2. As for the class of graphs of large girth, the class of all $(r,s)$-incomparable ordered graphs has unbounded chromatic number, and thus is not $\chr$-bounded.

\begin{theorem}[Berger et al. \cite{heroes}]
    For every positive integers $r$ and $s$, the class of all $(r,s)$-incomparable ordered graphs is not $\chr$-bounded.
\end{theorem}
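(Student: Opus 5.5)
The plan is to build, by induction on $k$, ordered graphs $G_k$ with embeddings $\phi_k$ such that $\chr(G_k)\ge k$ and $G_k$ is $(r,s)$-incomparable. This gives unbounded chromatic number together with clique number at most $2$, since $(r,s)$-incomparable ordered graphs are triangle-free by the preceding lemma. For the induction step I would use a Tutte/Nešetřil--Rödl style amplification. We may assume $r\ge 2$, since $(r,s)$-incomparability implies $(r',s')$-incomparability for smaller parameters.

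Given $G_k$ on $m$ vertices, take an $m$-uniform hypergraph $\mathcal H$ on a vertex set $S$ with chromatic number greater than $k$ and girth greater than some large $g=g(r,s)$; such hypergraphs exist by Erdős--Hajnal. Start from $S$ as an independent set. For each hyperedge $e\in\mathcal H$, add a fresh copy $G_k^e$ of $G_k$ together with a perfect matching $M_e$ between $V(G_k^e)$ and $e$. The colouring argument is the standard one. In any $k$-colouring, some hyperedge $e$ is monochromatic, say with colour $c$. The copy $G_k^e$ needs all $k$ colours, so some vertex of $G_k^e$ has colour $c$, but it is matched to a vertex of $e$ of colour $c$, which is a contradiction. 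Hence $\chr(G_{k+1})\ge k+1$. Large girth of $\mathcal H$ also ensures that the new graph has no short cycles other than those inside copies. More importantly, the ball of radius about $s$ around any edge is tree-like outside single copies.

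The real content is choosing the embedding $\phi_{k+1}$. I would place each copy $G_k^e$, embedded via $\phi_k$ (which we may rescale and translate freely), inside a short interval. I would then choose positions for the copies and for the vertices of $S$ on a geometric scale, using powers of $2r$ or larger, as follows.

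First, all edges inside every copy have lengths at most some bound $L$. All matching edges are much longer than $rL$, so an old edge and a new edge are never comparable.

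Second, two matching edges that are joined by a path of length at most $s$ must have lengths differing by a factor greater than $r$.

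For the second condition, I would give each $s'\in S$ a position $(2r)^{b_{s'}}$ and each copy $e$ a position $-(2r)^{a_e}$. Then the length of a matching edge from $s'$ to $G_k^e$ is, up to a factor $2$, $(2r)^{\max(a_e,b_{s'})}$. Choosing exponents greedily along a suitable ordering, using that the radius-$s$ neighbourhoods are trees of hyperedges and copies by the girth of $\mathcal H$, one tries to make the dominant exponents of matching edges within distance $s$ of each other pairwise distinct.

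The main obstacle is exactly this exponent assignment. The pitfalls are the two naive extremes. If we make $b$ dominant, then two matching edges at the same $s'$ coming from different copies become comparable, via a path of length $1$. If we make $a$ dominant, then all edges of one matching $M_e$ become comparable. So the assignment must genuinely mix the two. I would try treating the pair (copy, $S$-vertex) as an edge of the bipartite incidence graph of $\mathcal H$, which has girth at least $2g$. The task is then to properly colour its edges at "distance at most $s$" by integer scales, realised as $\max(a_e,b_{s'})$ or, more flexibly, by perturbing positions inside each copy's interval. Since this incidence graph is locally a tree, a greedy assignment with $b$ and $a$ interleaved on well-separated levels should succeed. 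If the max-structure is too rigid, the fallback is to follow Berger et al.\ directly and replace the geometric placement by a random embedding, deleting the few bad edges, at which point one must show that the chromatic number survives the deletions.
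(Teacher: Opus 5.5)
The paper gives no proof of this statement; it is quoted from Berger et al. Your argument therefore has to stand on its own. Its only non-routine step, the embedding, is not carried out, and the scheme you sketch provably cannot be completed in any exponent order.

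Take $r\ge 3$ and $s\ge 1$. You must cover large $r,s$ anyway, since the class shrinks as $r,s$ grow. With $s'$ at $(2r)^{b_{s'}}$ and $G_k^e$ clustered near $-(2r)^{a_e}$, a matching edge between $s'$ and $G_k^e$ has length roughly in $[(2r)^{M},2(2r)^{M}]$, where $M=\max(a_e,b_{s'})$. So two such edges with the same $M$ have ratio below $3\le r$.

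\begin{itemize}
\item All matching edges at $s'$ share $s'$, so at most one hyperedge $e\ni s'$ can have $a_e\le b_{s'}$.
\item Inside one copy, the vertices matched to $\{s'\in e : b_{s'}\le a_e\}$ all have $M=a_e$. So they must be pairwise non-adjacent in $G_k^e$, since an edge between two of them is a connecting path of length $1\le s$.
\item Once $\chi(G_k)\ge 3$, $G_k$ contains an odd cycle, so $\alpha(G_k)\le m-2$.
\end{itemize}

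Hence $B_e=\{s'\in e : b_{s'}>a_e\}$ has at least two elements for every $e$, and by the first point the sets $B_e$ are pairwise disjoint. Giving two vertices of each $B_e$ different colours then properly $2$-colours $\mathcal H$, contradicting $\chi(\mathcal H)>k\ge 3$.

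This obstruction sits at distance $0$ and $1$, so the tree-like neighbourhoods from the girth of $\mathcal H$ cannot help. Moving vertices inside a copy also changes nothing while copies are short compared with $(2r)^{a_e}$, as in your placement. Any repair has to build $\mathcal H$ together with its embedding, which is essentially the original problem again.

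There are two smaller omissions.
\begin{itemize}
\item If $e\cap e'=\{s'\}$, then $G_k^e$ and $G_k^{e'}$ are joined through $s'$ by a path of length $2$. Translates of one $\phi_k$ then have comparable old edges, so copies need scalings separated by more than $r$ times the length spread of $\phi_k$.
\item Your fallback (random embedding, then delete bad edges) is not an argument. Deleting edges can destroy the chromatic number.
\end{itemize}

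What works is to drop the amplification and build the graph at random on $[N]$, with the natural order and the identity embedding.
\begin{itemize}
\item Fix $c\ge\max(r,16k)$ and scales $t=1,\dots,T$ with $c^{T+1}\le N$. Join $i<j$ with $c^t\le j-i<c^{t+1}$ independently with probability $D/(2Tc^{t+1})$. Each vertex then expects about $D$ neighbours in total, but only $D/T$ per scale.
\item Call a path of length $\ell\le s+2$ bad if its end edges have scales differing by at most one. There are at most $3T^{\ell-1}$ such scale sequences, and each step contributes at most $D/T$. So the expected number of bad paths is at most $3(s+1)ND^{s+2}/T\le N/4$ for $T$ large.
\item Counting pairs at distance in $[c^t,c^{t+1})$ inside blocks of length $c^{t+1}$ shows that every set of $N/(2k)$ vertices spans at least $ND/(32k^2)$ edges in expectation. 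For $D\ge 64k^2$, a union bound over the $2^N$ sets gives $\alpha<N/(2k)$ with high probability.
\item With positive probability both events hold. Delete one vertex from each bad path. At least $N/2$ vertices remain, so the chromatic number exceeds $k$.
\item Two remaining edges joined by a path of length at most $s$ would, via a shortest such path, form a bad path. So their scales differ by at least two, and their length ratio exceeds $c\ge r$.
\end{itemize}

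The feature your exponent scheme lacks is exactly this: the number of scales is huge compared with the degree. A short path therefore meets two nearby scales only with probability $O(1/T)$.
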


It follows that, for every $r,s>0$, the class of all tournaments that have an $(r,s)$-incomparable backedge graph is not $\dic$-bounded. Therefore, if $H$ is a $\dic$-bounding tournament, then $H$ must have an $(r,s)$-incomparable backedge graph for all $r,s>0$. Furthermore, as $H$ has finitely many backedge graphs, it has a backedge graph which is $(r,r)$-incomparable for infinitely many values of $r>0$, and thus, which is $(r,s)$-incomparable for all $r,s>0$. We say that an ordered graph is \emph{incomparable} if it is $(r,s)$-incomparable for all $r,s>0$. The following proposition gives a characterization of incomparable graphs that it is easy to check.

\begin{proposition}[Kim \& Kim \cite{KimKim}]
    An ordered graph $(G,\prec)$ is incomparable if and only if there is a bipartition $A,B$ of $V(G)$ such that $A \prec B$, the edges between $A$ and $B$ are in different connected components of $G$, and $(G[A],\prec)$ and $(G[B],\prec)$ are incomparable.
\end{proposition}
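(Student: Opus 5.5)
We prove the two implications separately, by induction on $|V(G)|$, where a graph with at most one vertex (hence no edges) is trivially incomparable. Throughout, $\phi$ denotes an embedding of $G$ in $\mathbb Z$.

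\emph{Sufficiency.} Let $A \prec B$ be a bipartition as in the statement. Since $(G[A],\prec)$ and $(G[B],\prec)$ are incomparable, for fixed $r,s$ we may choose embeddings $\phi_A$ and $\phi_B$ with no $(r,s)$-comparable pair inside each part. We combine them into $\phi$, placing $B$ far to the right of $A$ with a gap $L$ much larger than $r$ times any edge length inside $A$ or $B$. Then every crossing edge has length at least $L$, so it is never within ratio $r$ of an internal edge. Two crossing edges $e,f$ lie in different components of $G$, so no path connects them. Since $\phi$ restricted to each part is a translate of $\phi_A$ or $\phi_B$, internal pairs remain incomparable. One subtlety: two crossing edges could have lengths within ratio $r$ of each other, but this is harmless because they are not connected. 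So $\phi$ witnesses $(r,s)$-incomparability, for every $r,s$.

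\emph{Necessity.} Suppose $G$ is incomparable but admits no such split; we will reach a contradiction. Take $r$ and $s$ huge compared with $n=|V(G)|$ and a witnessing embedding $\phi$. Consider the $n-1$ gaps between consecutive vertices, with lengths $g_1,\dots,g_{n-1}$. Since $r$ is much larger than $n$, pigeonholing on the scales $g_i$ (with ratio thresholds $r^{1/n}$, say) yields a "large" gap position $t$. It separates $V(G)$ into $A \prec B$ such that the gap is more than $r^{1/n}$ times larger than every edge length inside $A$ or inside $B$. The key idea is that every crossing edge has length in $[g_t, \text{diam}]$. Choosing the gap at the right scale via a dyadic-type pigeonhole on the multiset of distances, we can ensure that all crossing edges have lengths within ratio $r$ of each other. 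Incomparability with $s \ge n$ then forces any two crossing edges to lie in different components. It remains to check that $(G[A],\prec)$ and $(G[B],\prec)$ are incomparable. This is immediate, because being $(r,s)$-incomparable is inherited by induced ordered subgraphs: restrict the embedding, and note that paths in the subgraph are paths in $G$.

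\emph{Main obstacle.} The delicate step is the choice of the cut in the necessity direction. We need a single gap that is simultaneously much larger than all internal edge lengths and such that the crossing edges are mutually within ratio $r$. We would try the following. Let $M=\phi(\max)-\phi(\min)$ and cut at a largest gap $g$, so that $g \ge M/(n-1)$. Every crossing edge then has length in $[g,M] \subseteq [g,(n-1)g]$, so crossing edges are pairwise within ratio $n-1 \le r$. This already forces them into distinct components, and no condition on internal edges is needed. A cut at a largest gap therefore works directly, and the only care required is to take $r \ge n-1$ and $s \ge n$.
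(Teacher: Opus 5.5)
The paper only cites this result from Kim and Kim and gives no proof, so I assess your argument on its own. Your necessity direction, in the form of your last paragraph, is correct. Cutting at a largest gap $g$ gives $M\le (n-1)g$, so any two crossing edges have lengths in $[g,(n-1)g]$. Hence $(n-1,n-1)$-incomparability alone forces them into distinct components, since two edges in one component are joined by a path of length at most $n-1$. Heredity to $G[A]$ and $G[B]$ is immediate. Delete the earlier sketch with thresholds $r^{1/n}$: its claim that the chosen gap dominates every internal edge is unjustified, and you do not need it.

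The genuine gap is in sufficiency. You treat crossing/internal pairs (via $L$), crossing/crossing pairs (distinct components), and pairs of edges inside the same part. You do not treat pairs with $e\in E(G[A])$ and $f\in E(G[B])$. Such edges can be joined by a short path through a crossing edge, and translating $\phi_B$ leaves their lengths unchanged, so they may be $(r,s)$-comparable.

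For example, let $G$ be the path $a_1a_2b_1b_2$ with $a_1\prec a_2\prec b_1\prec b_2$, $A=\{a_1,a_2\}$ and $B=\{b_1,b_2\}$. Take $\phi_A,\phi_B$ giving length $1$ to the single edge of each part; these are legitimate witnesses. Your combined embedding then gives $a_1a_2$ and $b_1b_2$ length $1$ each, and they are joined by the path $a_2b_1$ of length $1$. So they are $(r,s)$-comparable for all $r,s\ge 1$.

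The fix is to rescale before placing the gap. Replace $\phi_B$ by $\lambda\phi_B$ for an integer $\lambda$ larger than $r$ times every edge length under $\phi_A$. Rescaling preserves all ratios, so $\lambda\phi_B$ still witnesses $(r,s)$-incomparability of $G[B]$. Then choose the gap $L$ larger than $r$ times every edge length under $\lambda\phi_B$. Edges inside $A$, edges inside $B$, and crossing edges now occupy length ranges separated by factors larger than $r$, so only same-type pairs need your other arguments.

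For two edges inside $A$, you should also justify that a short path in $G$ yields a short path in $G[A]$, which is not automatic. A path of $G$ between vertices of $A$ that visits $B$ must leave and re-enter $A$ through two distinct crossing edges. Those two edges would then lie in the same component, contradicting the hypothesis. With these two additions the proof is complete.
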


We can now verify if a tournament has an incomparable backedge graph by iterating over all possible orderings and checking if this property holds. We were able to identify six tournaments that have a backedge graph which is a forest, but no incomparable backedge graph. Therefore, these are non $\dic$-bounding "forest" tournaments which contradicts the aforementionned conjecture of Aboulker, Aubian, Charbit, and Lopes \cite{AACL23}. We represent these tournaments in \Cref{fig:incomparable} by drawing, for each, a backedge graph which is a forest.

\begin{figure}[h]
    \centering
    \begin{tikzpicture}[bullet/.style={circle, fill, inner sep=2pt},nodes=bullet]
    
        \foreach \x in {1,...,5} \node at (\x,4) {} ;
        \draw (2,4)  to[bend left] (5,4)  to[bend left] (1,4)  to[bend left] (4,4)  ;

        \foreach \x in {0.5,...,5.5} \node at (\x,2) {} ;
        \draw (1.5,2)  to[bend right] (5.5,2)  to[bend left] (2.5,2)  ;
        \draw (3.5,2)  to[bend right] (0.5,2)  to[bend left] (4.5,2)  ;

        \foreach \x in {0.5,...,5.5} \node at (\x,0) {} ;
        \draw (1.5,0)  to[bend left] (4.5,0)  ;
        \draw (2.5,0)  to[bend right] (5.5,0)  to[bend right] (0.5,0)  to[bend right] (3.5,0)  ;

        \foreach \x in {8,...,14} \node at (\x,4) {} ;
        \draw (9,4)  to[bend right] (14,4)  to[bend left] (10,4) to[bend left]  (12,4)  to[bend right] (8,4)  to[bend left] (13,4)  ;

        \foreach \x in {8,...,14} \node at (\x,2) {} ;
        \draw (9,2)  to[bend left] (13,2)  ;
        \draw (11,2)  to[bend right] (14,2) to[bend right] (8,2)  to[bend right] (10,2) to[bend left] (12,2) ;

        \foreach \x in {8,...,14} \node at (\x,0) {} ;
        \draw (9,0) to[bend left] (13,0) to[bend right] (10,0) to[bend left] (8,0) to[bend left] (14,0) to [bend left] (11,0) ;
        
        \end{tikzpicture}
    \caption{(backedge graphs of) new non $\dic$-bounding tournaments}
    \label{fig:incomparable}
\end{figure}

There are many ways to build graphs with bounded clique number and large chromatic number. One of the first constructions was published by Tutte under the pseudonym Blanche Descartes \cite{BD}. It is defined recursively as follows. Let $B_1$ be the one-vertex graph, and for $k \ge 1$, let $B_{k+1}$ be the graph made of $\binom{k \cdot |V(B_k)|}{|V(B_k)|}$ disjoint copies of $B_k$ and $k |V(B_k)|$ additional vertices, connecting each copy of $B_k$ to a distinct set of $|V(B_k)|$ additional vertices via a perfect matching. A simple induction show that for all $k \ge 1$, $B_k$ is triangle-free and $\chr(B_k)=k$ .

\begin{figure}[h]
    \centering
    \begin{tikzpicture}[
        bullet/.style={circle, fill, inner sep=2pt},
        blue_rect/.style={rounded corners, blue, fill=blue!20},
        red_rect/.style={rounded corners, red, fill=red!20},
        ]
    \draw[red_rect] (1,1.5) rectangle (9,2.5) ;
    \foreach \x in {2,4,...,8} \node[bullet] at (\x,2) {} ;
    \node at (10,2.2) {\color{red}additional} ;
    \node at (10,1.8) {\color{red}vertices} ;

    \foreach \x in {0,2,...,10} {
        \draw[blue_rect] (\x-.7,-.3) rectangle (\x+.7,.3) ;
        \node[bullet] at (\x-.3,0) {} ;
        \node[bullet] at (\x+.3,0) {} ;
        \draw (\x-.3,0) to (\x+.3,0) ;
        \node at (\x,-.6)  {\color{blue}$B_k$} ;
    }

    \draw (-.3,0) to (2,2) ;
    \draw (+.3,0) to (4,2) ;
    \draw (1.7,0) to (2,2) ;
    \draw (2.3,0) to (6,2) ;
    \draw (3.7,0) to (2,2) ;
    \draw (4.3,0) to (8,2) ;
    \draw (5.7,0) to (4,2) ;
    \draw (6.3,0) to (6,2) ;
    \draw (7.7,0) to (4,2) ;
    \draw (8.3,0) to (8,2) ;
    \draw (9.7,0) to (6,2) ;
    \draw (10.3,0) to (8,2) ;

    \end{tikzpicture}
    \caption{Blanche Descartes' construction ($k=2$)}
    \label{fig:BD}
\end{figure}

We recursively define an ordering $\prec_k$ of $B_k$ for every $k\ge1$. Let $\prec_1$ be the unique ordering of $B_1$, and for each $k \ge 1$, we define $\prec_{k+1}$ by first taking the additional vertices in any order, and then each copy of $B_k$, successively, in the order $\prec_k$. For $k\ge1$, denote $TB_k$ the tournament that admits $(B_k, \prec_k)$ as a backedge graph. By \Cref{prop:ineq}, we have $\diomega(TB_k)=2$ and $\dic(TB_k) \ge \frac k 2$. Therefore, every $\dic$-bounding tournament must be a subtournament of $TB_k$ for $k$ large enough, and all subtournaments of some $TB_k$ satisfies the next property:

\begin{proposition}
    Let $H$ be a $\dic$-bounding tournament. There exist a partition $A_0,\dots,A_k$ of $V(H)$ such that:
    \begin{enumerate}[(i)]
        \item $A_0$ is transitive ;
        \item $A_i \Ra A_j$ for $1 \le i<j \le k$ ;
        \item every vertex not in $A_0$ has at most one arc to $A_0$ ; and
        \item for each $i \in [k]$, every vertex in $A_0$ has at most one arc from $A_i$.
    \end{enumerate}
\end{proposition}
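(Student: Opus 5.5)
Since $\diomega(TB_k)=2$ and $\dic(TB_k)\ge \frac k2$ by \Cref{prop:ineq}, the class $\{TB_k : k\ge 1\}$ has bounded directed clique number but unbounded dichromatic number. So if $H$ is $\dic$-bounding, $H$ is a subtournament of $TB_k$ for some $k$. The proof therefore reduces to two steps. First, show that $TB_k$ itself admits a partition with properties (i)--(iv). Second, show that these properties are inherited by subtournaments. For the second step, given $V(H)\subseteq V(TB_k)$, I will intersect each part with $V(H)$ and discard the empty ones. Properties (i)--(iv) are all monotone under deletion (transitivity, $\Ra$ between parts, and ``at most one arc'' conditions), so this is routine. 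I will also allow $k=0$ or $A_0=\emptyset$ where convenient.

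For the first step I will argue by induction on the recursive structure, though the natural partition of $TB_{k+1}$ already comes from a single level of the construction. Write $V(TB_{k+1}) = S\cup C_1\cup\dots\cup C_N$, where $S$ is the set of $k|V(B_k)|$ additional vertices and $C_1,\dots,C_N$ are the copies of $B_k$, listed in the order $\prec_{k+1}$. Set $A_0=S$ and $A_i=C_i$. The ordering $\prec_{k+1}$ puts $S$ first and then $C_1\prec\dots\prec C_N$. In the backedge graph $B_{k+1}$ there are no edges inside $S$, since $S$ is a stable set, and no edges between distinct copies. An edge of the backedge graph corresponds to a backward arc. Hence every pair inside $S$ is a forward arc, so $S$ induces a transitive tournament, which gives (i). Likewise, for $i<j$ every pair between $C_i$ and $C_j$ is forward, so $C_i\Ra C_j$, which gives (ii).

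For (iii) and (iv), the only backedge-graph edges between $S$ and $C_i$ come from the perfect matching between $C_i$ and its private set of $|V(B_k)|$ additional vertices. Since $S\prec C_i$, a non-edge $sv$ with $s\in S$, $v\in C_i$ is the forward arc $s\to v$, and a matching edge is the backward arc $v\to s$. A vertex $v\notin A_0$ lies in some $C_i$ and has exactly one matching partner in $S$, so it has at most one arc to $A_0$, which is (iii). For each $i$, a vertex $s\in A_0$ receives at most one matching edge from $C_i$, because the matching is a perfect matching onto a set of additional vertices, so $s$ has at most one arc from $A_i$, which is (iv).

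The only real obstacle is being careful with the directions. ``Arc to $A_0$'' must correspond exactly to matching edges, so I need to check that all other pairs between $S$ and $C_i$ are oriented from $S$ to $C_i$, which follows from $S\prec C_i$ together with the absence of an edge. I must also make sure the subtournament embedding does not require the induced ordering to agree with $\prec_{k+1}$. It does not: the partition properties are stated purely in terms of arcs of $TB_k$, which restrict to $H$ regardless of any ordering.
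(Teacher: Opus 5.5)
Your check of the top-level partition is correct. Take $A_0=S$, the additional vertices, and $A_1,\dots,A_N$ the copies $C_1\prec_m\dots\prec_m C_N$ of $B_{m-1}$. Then (i)--(iv) hold in $TB_m$, and each of them survives restriction to $V(H)$.

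The gap is the sentence where you allow $A_0=\emptyset$. If an empty $A_0$ were allowed, the proposition would say nothing. For \emph{every} tournament $H$, the partition $A_0=\emptyset$, $A_1=V(H)$ satisfies (i)--(iv) vacuously, without using that $H$ is $\dic$-bounding. So the parts must be nonempty, and the real content is $A_0\neq\emptyset$. That is exactly what makes the proposition usable for showing that specific tournaments are not $\dic$-bounding.

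Your argument gives no control over this. An arbitrary copy of $H$ in $TB_m$ can avoid $S$ entirely, for instance by lying inside a single copy $C_i$, which induces a copy of $TB_{m-1}$. Then intersecting with $V(H)$ and discarding empty parts leaves no $A_0$ at all.

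The missing step is a descent that forces $V(H)\cap S\neq\emptyset$. The paper takes $m$ \emph{minimal} such that $TB_m$ contains $H$, and first treats strongly connected $H$ (the case $m=1$ is a single vertex, with $k=0$). Suppose $H$ avoided $S$, and let $C_i$ be the first copy meeting $H$. Then $V(H)\cap C_i \Ra V(H)\setminus C_i$, so strong connectivity gives $V(H)\subseteq C_i$. Hence $H$ embeds in $TB_{m-1}$, contradicting minimality.

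For general $H$, the paper applies this to the initial strong component $H'$, which is still $\dic$-bounding because $\Forb(H')\subseteq\Forb(H)$. It then appends $V(H)\setminus V(H')$ as a last part; (ii)--(iv) hold for it because $H'\Ra H\setminus H'$.

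Alternatively, you can induct on $m$ directly. If $H$ misses $S$, recurse into the first copy $C_i$ meeting $H$ (note $TB_m[C_i]\cong TB_{m-1}$). Then append the nonempty sets $V(H)\cap C_j$, $j>i$, as further parts. These receive all arcs from $C_i$ and send none back, so (ii)--(iv) still hold. Adding either argument completes your proof.
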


\begin{proof}
    We first deal with the case that $H$ is strongly connected. Let $f$ be a function such that $\dic(T) < f(\diomega(T))$ for all $H$-free tournaments $T$. The tournament $T_{2f(2)}$ defined above verifies $\diomega(T_k)=2$ and $\dic(T_k) \ge f(2)$, thus $T_{2f(2)}$ is not $H$-free. Consider the least integer $m$ such that $T_m$ contains a copy of $H$. If $m=1$, then $H$ has only one vertex, and the statement is trivial, so suppose $m>1$. Recall that $(B_m,\prec_m)$ is a backedge graph of $T_m$. Let $A_1 \prec_m \dots \prec_m A_k$ be the vertex-sets inducing disjoint copies of $B_{m-1}$ in $B_m$, and let $A_0$ be the additional vertices in $B_m$. It is easy to check that this partition satisfies properties $(i)$ to $(iv)$.
    
    For $0 \le i \le k$, denote $A_i' = A_i \cap V(H)$. Note that $A_0', \dots , A_k'$ still verify properties $(i)$ to $(iv)$, however some sets may be empty. We would like to remove all empty sets from this near-partition, but if we remove the set $A_0'$, property (i) may break. Let $i$ be the least integer such that $A_i' \ne \emptyset$, and toward a contradiction, suppose that $i>0$. By property (ii), we have $A_i' \Ra (H \setminus A_i')$. The strong connectivity of $H$ implies that $H \setminus A_i' = \emptyset$. Hence $H$ is contained in the set $A_i$ that induces a copy of $B_{m-1}$. This contradicts the minimality of $m$.

    If $H$ is not strongly connected, denote by $H'$ the strongly connected component of $H$ such that $H' \Ra (H \setminus H')$. As $\Forb(H') \subset \Forb(H)$, the tournament $H'$ is $\dic$-bounding too. Let $A_0, \dots, A_{k-1}$ be a  partition of $H'$ that satisfies the proposition and take $A_k = H \setminus H'$. It is easy to see that this partition verifies properties $(i)$ to $(iv)$.
\end{proof}

We were able to find tournaments that do not admit such partition, and thus are not $\dic$-bounding. They are represented in \Cref{fig:non}. There are many others constructions of triangle-free graphs with large chromatic number, for which we can try a similar approach, but we were not able to find any other non $\dic$-bounding tournaments from them. In \Cref{fig:todo}, we represent the smallest tournaments for which this is open.

\begin{figure}[h]
    \centering
    \begin{tikzpicture}[bullet/.style={circle, fill, inner sep=2pt},nodes=bullet]

        \foreach \x in {0.5,...,6.5} \node at (\x,4) {} ;
        \draw (6.5,4)  to[bend left] (4.5,4) to[bend left] (0.5,4) to[bend left] (5.5,4) ;
        \draw (1.5,4) to[bend left] (3.5,4) ;
        
        \foreach \x in {0.5,...,6.5} \node at (\x,2) {} ;
        \draw (6.5,2)  to[bend left] (3.5,2)  to[bend left] (0.5,2)  to[bend left] (5.5,2)  ;
        \draw (1.5,2)  to[bend left] (4.5,2)  ;

        \foreach \x in {0,...,7} \node at (\x,0) {} ;
        \draw (1,0) to[bend right] (3,0) ;
        \draw (4,0) to[bend right] (7,0) ;
        \draw (5,0) to[bend right] (0,0) to[bend left] (6,0) ;
        
        \foreach \x in {8,...,15} \node at (\x,4) {} ;
        \draw (8,4) to[bend left] (11,4) ;
        \draw (9,4) to[bend right] (12,4) to[bend right] (15,4) ;
        \draw (13,4) to[bend right] (8,4) to[bend left] (14,4) ;

        \foreach \x in {8,...,15} \node at (\x,2) {} ;
        \draw (14,2) to[bend right] (8,2) to[bend left] (12,2) ;
        \draw (15,2) to[bend left] (13,2) to[bend left] (11,2) ;
        \draw (13,2) to[bend left] (9,2) ;

        \foreach \x in {8,...,15} \node at (\x,0) {} ;
        \draw (14,0) to[bend right] (8,0) to[bend left] (12,0) ;
        \draw (15,0) to[bend left] (13,0) to[bend left] (10,0) ;
        \draw (13,0) to[bend left] (9,0) ;
        
    \end{tikzpicture}
    \caption{more (backedge graphs of) new non $\dic$-bounding tournaments}
    \label{fig:non}
\end{figure}

We now refute Conjecture 5.6.1 (and thus Conjecture 5.5.10, which is stronger) from \cite{Kim}. This conjecture states that if $T_1$ and $T_2$ are subtournaments of respectively $D_n$ and $A_n$ for some $n \ge 0$ such that $T_2$ has an incomparable backedge graph and a backedge graph that is a "shift graph" (see \cite{Kim} for the definition), then $\Forb(T_1,T_2)$ has bounded dichromatic number. Consider $T_1 = D_3$ and $T_2$ the tournament represented top left of Figure \ref{fig:non}. It is easily checked that $T_2$ verifies the hypothesis of the conjecture, and that the tournaments $TB_k$ contain neither $T_1$ nor $T_2$. Hence $\Forb(T_1,T_2)$ has unbounded dichromatic number.

\begin{figure}[h]
    \centering
    \begin{tikzpicture}[bullet/.style={circle, fill, inner sep=2pt},nodes=bullet]
    
        \foreach \x in {0,...,5} \node at (\x,0) {} ;
        \draw (0,0) to[bend left] (3,0) to[bend left] (5,0) ;
        \draw (1,0) to[bend right] (4,0) ;

        \foreach \x in {0,...,5} \node at (\x+8,0) {} ;
        \draw (13,0) to[bend right] (8,0) to[bend left] (11,0) ;
        \draw (9,0) to [bend right] (12,0) ;

        \foreach \x in {0,...,5} \node at (\x+4,2) {} ;
        \draw (8,2) to[bend right] (4,2) to[bend left] (7,2) ;
        \draw (5,2) to[bend right] (9,2) ;

        \foreach \x in {0,...,5} \node at (\x,4) {} ;
        \draw (4,4) to[bend right] (0,4) to[bend left] (3,4) ;
        \draw (2,4) to[bend right] (5,4) ;

        \foreach \x in {0,...,5} \node at (\x+8,4) {} ;
        \draw (13,4) to[bend right] (9,4) to[bend left] (12,4) ;
        \draw (8,4) to[bend right] (11,4) ;
        
    \end{tikzpicture}
    \caption{minimal tournaments for which it is open whether they are $\dic$-bounding or not.}
    \label{fig:todo}
\end{figure}

Let us end this section by proposing a possible (though hazardous) characterisation of $\dic$-bounding tournaments. Note that the backward implication holds as it is exactly  Proposition \ref{prop:backedge_bounding}.

\begin{conjecture}\label{conj:GS2}
    A tournament is $\dic$-bounding if and only if it has a backedge graph which is $\chr$-bounding.
\end{conjecture}

\section{$\diomega$-critical Tournaments}\label{sec:crit}

The results in this section were found with Aboulker and Charbit, who chosed not to appear as authors.

Let $T$ be a tournament and $k\ge 1$ an integer. We say that $T$ is \emph{$k$-$\dic$-critical} if $\dic(T) = k$ and for every vertex $v$, we have $\dic(T-v) = k-1$. In order words, the $k$-$\dic$-critical tournaments are precisely the minimal tournaments with dichromatic number at least $k$. Similarly, we say that $T$ is \emph{$k$-$\diomega$-critical} if $\diomega(T) = k$ and for every vertex $v$, we have $\diomega(T-v) = k-1$. Clearly, a tournament has directed clique number at least $k$ if and only if it contains a $k$-$\diomega$-critical tournament. If the number of $k$-$\diomega$-critical tournament was finite, then checking if a tournament $T$ contains any of them would be a polynomial-time algorithm that decides whether $\diomega(T) \ge k$ or not. For $k \ge 4$, this problem is NP-hard, so there are infinitely many $k$-$\diomega$-critical tournaments unless P=NP.

\medskip

On the other hand, it is easy to show that the one-vertex tournament is the only 1-$\diomega$-critical tournament, and that the directed triangle is the only 2-$\diomega$-critical tournament. We next present constructions of two infinite families of 3-$\diomega$-critical and 4-$\diomega$-critical tournaments respectively. Recently, using LLMs, Lelarge found an infinite family of 5-$\diomega$-critical tournaments, and formalized the proof in Rocq \cite{Lelarge}.

For every integer $n \ge 1$, we denote $T_{2n+1}$ the tournament with vertex set $\{v_0, \dots, v_{2n}\}$ and such that for $0 \le i, j \le 2n$, there is an arc from $v_i$ to $v_j$ if and ony if $j-i \in \{1, \dots, n-1, n+1\}$ modulo $2n+1$. Neumann-Lara and Urrutia \cite{critical} proved that the tournament $T_{2n+1}$ is 3-$\dic$-critical for all $n \ge 3$. 

\begin{proposition}
    For every integer $n\ge3$, the tournament $T_{2n+1}$ is 3-$\diomega$-critical. 
\end{proposition}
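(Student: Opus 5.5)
We need to show two things: $\diomega(T_{2n+1}) = 3$, and $\diomega(T_{2n+1} - v) \le 2$ for every vertex $v$. The tournament is vertex-transitive (the rotation $v_i \mapsto v_{i+1}$ is an automorphism), so for criticality it suffices to treat $v = v_0$.

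\emph{Lower bound.} Since $T_{2n+1}$ is 3-$\dic$-critical, $\dic(T_{2n+1}) = 3$. If some ordering had $\omega(T_{2n+1}^\prec) \le 2$, the backedge graph would be triangle-free. I will argue directly that every ordering contains a backward triangle. The first vertex $x$ in the ordering is adjacent in $T^\prec$ to all of $N^-(x)$. So it suffices to show that $\diomega(N^-(x)) \ge 2$, i.e.\ that $N^-(x)$ contains a directed cycle; then a backward edge inside $N^-(x)$ together with $x$ forms a triangle.

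By symmetry take $x = v_0$. Its in-neighbours are $v_j$ with $-j \in \{1,\dots,n-1,n+1\}$, namely $v_{2n}, \dots, v_{n+2}$ and $v_{n}$. Let me check that they contain a directed triangle. Consider $v_n \to v_{2n}$ (difference $n$ is not allowed) — so it is actually $v_{2n} \to v_n$, since $n - 2n = -n \equiv n+1$. Also $v_n \to v_{n+2}$ (difference $2$, allowed when $n \ge 3$), and $v_{n+2} \to v_{2n}$ (difference $n-2 \in \{1,\dots,n-1\}$ when $n \ge 3$). This gives the directed triangle $v_n v_{n+2} v_{2n}$. Hence $\diomega(T_{2n+1}) \ge 3$.

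\emph{Upper bound and criticality.} It suffices to show $\diomega(T_{2n+1} - v_0) \le 2$; adding $v_0$ at the front or back then raises the clique number by at most one, which gives $\diomega(T_{2n+1}) \le 3$. So the heart of the proof is to exhibit an explicit ordering of $v_1, \dots, v_{2n}$ whose backedge graph is triangle-free.

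In the natural order $v_1 \prec \dots \prec v_{2n}$, an arc $v_i v_j$ is backward exactly when $i > j$ and $j - i \bmod (2n+1) \in \{1,\dots,n-1,n+1\}$. Writing $d = i - j \in [1, 2n-1]$, the arc points from the larger index to the smaller iff $2n+1-d \in \{1,\dots,n-1,n+1\}$, i.e.\ $d \in \{n+2, \dots, 2n\} \cup \{n\}$. So backedges join pairs at distance $n$ or at distance at least $n+2$. A triangle $a < b < c$ would need $b-a \ge n$ and $c-b \ge n$, hence $c - a \ge 2n$, impossible within $[1,2n]$ unless $c-a = 2n-1$ or less — so indeed no triangle. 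I will verify this inequality carefully, since it is the main step: both consecutive gaps must be at least $n$, which forces $c - a \ge 2n > 2n-1$, a contradiction.

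If the natural order had an edge case failing, the fallback would be to rotate the starting point or interleave the two halves. But the distance argument appears robust, so the main obstacle is simply getting the arc-direction convention right in the modular computation.
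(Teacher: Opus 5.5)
Your proof is correct. The lower bound is the paper's argument mirrored. The paper takes the \emph{last} vertex $v_i$, which is adjacent in $T^\prec$ to all of $N^+(v_i)$, and uses the directed triangle $v_{i+1}v_{i+2}v_{i+n+1}$ inside it. You take the first vertex, its in-neighbourhood, and the triangle $v_nv_{n+2}v_{2n}$; your arc computations check out for $n\ge 3$.

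The upper bound is where you genuinely differ. The paper cites Neumann-Lara and Urrutia's theorem that $T_{2n+1}$ is 3-$\dic$-critical and concludes with $\diomega(T-v)\le\dic(T-v)\le 2$. You instead exhibit the natural ordering $v_1\prec\cdots\prec v_{2n}$ of $T_{2n+1}-v_0$. You observe that backward arcs join only indices at distance $n$ or at least $n+2$ (distance $2n$ is never attained, which is harmless). Hence a triangle $a<b<c$ would force $c-a\ge 2n$, which is impossible.

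The paper's route is a one-liner given the citation. Yours is self-contained and actually yields more. Every backedge has length at least $n$, so the halves $\{v_1,\dots,v_n\}$ and $\{v_{n+1},\dots,v_{2n}\}$ are stable in your backedge graph. The graph is therefore bipartite, giving $\dic(T_{2n+1}-v)\le 2$. Combined with $\dic(T_{2n+1})\ge\diomega(T_{2n+1})\ge 3$, this reproves the Neumann-Lara--Urrutia criticality rather than relying on it. It also gives explicitly the 2-dicolourings used in the next proposition.

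Two small points remain. First, criticality requires $\diomega(T_{2n+1}-v)=2$. The missing inequality $\ge 2$ follows from the observation $\diomega(T)\le\diomega(T-v)+1$, which you already use. Second, replace the garbled phrase ``impossible within $[1,2n]$ unless $c-a=2n-1$ or less'' with the clean inequality $c-a\ge 2n>2n-1$ that you state right after it.
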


\begin{proof}
    Let $\prec$ be an ordering of $T_{2n+1}$. The last vertex in that ordering, say $v_i$, is adjacent to $v_{i+1}, \dots, v_{i+n-1}$, and $v_{i+n+1}$ in the backedge graph $T^\prec$. Moreover, $v_{i+1}$, $v_{i+2}$, and $v_{i+n+1}$ form a directed triangle in $T$, so two of them must be adjacent in $T^\prec$, and together with $v_i$ they form a triangle in $T^\prec$. Hence $\diomega(T_{2n+1}) \ge 3$.

    Since $T_{2n+1}$  is $3$-$\dic$-critical, for every vertex $v \in V(T_{2n+1})$, we have $\diomega(T-v) \le \dic(T-v) \le 2$.
\end{proof}

We say that a digraph $D$ is \emph{perfect} if for every induced subgraph $H$ of $D$, we have $\dic(H) = \diomega(H)$. Clearly, every $2$-dicolourable digraph is perfect, and by what precedes, the tournament $T_{2n+1}$ is perfect for all $n \ge 3$. Are there other perfect tournaments? One can check that the tournaments $D_3$ and $A_3$ are not perfect, so it follows from \Cref{thrm:An_Dn} that the class of perfect tournaments has bounded directed clique number and dichromatic number.

\begin{proposition}
    For every integer $n \ge 3$, the tournament $\Delta(T_{2n+1}, T_{2n+1}, T_{2n+1})$ is $4$-$\diomega$-critical. 
\end{proposition}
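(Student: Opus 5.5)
Write $T = T_{2n+1}$ and $D = \Delta(T,T,T)$, with the three copies $X_1 \Ra X_2 \Ra X_3 \Ra X_1$. Two things must be shown: $\diomega(D) \ge 4$, and $\diomega(D - v) \le 3$ for every vertex $v$ (criticality then forces $\diomega(D)=4$, since deleting one vertex lowers the clique number by at most one).

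\textbf{Lower bound.} Fix an ordering $\prec$ of $D$ and let $v$ be its last vertex. By symmetry we may take $v \in X_1$. Then $v$ is adjacent in $D^\prec$ to every vertex of $N^+(v)$, because every out-arc of $v$ goes backward; in particular $v$ is adjacent to all of $X_2$.

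It therefore suffices to show $\omega(D^\prec) \ge 4$ using only the vertices of $X_2$ together with $v$. Since $\diomega(X_2) = 3$ by the previous proposition, $X_2^\prec$ contains a triangle. Adding $v$, which is adjacent to all of $X_2$, gives a $K_4$. Hence $\diomega(D) \ge 4$. This step is immediate.

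\textbf{Upper bound.} Remove a vertex $w$; by symmetry $w \in X_1$. Since $T$ is $3$-$\diomega$-critical, $\diomega(X_1 - w) \le 2$, and the argument in the previous proof gives more: $\dic(X_1-w) \le 2$.

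First try the ordering
\[
X_2 \prec X_3 \prec X_1 - w,
\]
with each block ordered internally. The only backward arcs between blocks then go from $X_1-w$ back to $X_2$. Any clique of $D^\prec$ meets each block in a clique of that block. A clique meeting all three blocks is impossible, since $X_2$ and $X_3$ have no edges between them in $D^\prec$. A clique meeting $X_2$ and $X_1 - w$ has size
\[
\omega(X_2^\prec) + \omega((X_1-w)^\prec) \ge 3 + 2 = 5,
\]
which is too big, so this ordering fails and we must interleave.

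The better plan is to split $X_1 - w$ into two acyclic parts $P \cup Q$, using $\dic(X_1-w)\le 2$, and split $X_2$ similarly as far as possible, then choose an ordering such as
\[
P \prec X_2 \prec X_3 \prec Q,
\]
with $P$ and $Q$ ordered topologically, so that each of them is a stable set in the backedge graph. Under such an ordering the only backedges are the following: those inside $X_2$ and inside $X_3$; the arcs $X_2 \to Q$ and $X_3 \to P$, which run into the cycle $X_1 \Ra X_2$ in the forward direction and are therefore forward; the arcs from $Q$ back to $X_2$, since $Q \in X_1$ and $X_1 \Ra X_2$; the arcs $X_3 \to P$, where $X_3 \Ra X_1$ makes them backward because $P$ comes first; and the arcs inside $X_1-w$ between $P$ and $Q$. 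A clique can then contain at most one vertex from $P$ and at most one from $Q$. Combined with a triangle from $X_2$ or $X_3$, this must be shown to stay at most $3$.

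This bookkeeping is the main obstacle. Concretely, I would exploit the explicit structure of $T_{2n+1}-w$ and its $\diomega$-orderings, using the cyclic symmetry and that $v_i \ra v_{i+1}, \dots, v_{i+n-1}, v_{i+n+1}$. The goal is an ordering of the copy $X_2$ whose backedge triangles avoid the vertices that receive backedges from $Q$, and likewise for $X_3$ and $P$. If the clean split fails, I would fall back on a computer check for small $n$ and look for a pattern to generalise.
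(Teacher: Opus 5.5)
Your lower bound is correct: it is the mirror image of the paper's argument, using the last vertex (complete to the out-neighbouring copy) where the paper uses the first vertex (complete to the in-neighbouring copy). The upper bound, however, has a genuine gap. The ordering $P \prec X_2 \prec X_3 \prec Q$ fails however you order $X_2$ and $X_3$ internally. Every $q\in Q$ comes after all of $X_2$ and $q \Ra X_2$ (since $X_1 \Ra X_2$), so $q$ is complete to $X_2$ in the backedge graph. Since $\omega(X_2^\prec)\ge \diomega(X_2)=3$, this already gives a $K_4$. Symmetrically, every $p\in P$ is complete to $X_3$. Both $P$ and $Q$ are nonempty, because $X_1-w$ is not acyclic. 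So the deferred ``bookkeeping'' is not just tedious. Its stated goal, making the triangles of $X_2$ avoid the vertices receiving backedges from $Q$, is impossible, because $Q$ sends backedges to all of $X_2$. Your list of backedges is also inconsistent: there are no arcs from $X_2$ to $Q$, and the arcs from $X_3$ to $P$ are called both forward and backward. More generally, in any ordering of $D-w$ with clique number $3$, the first vertex must lie in $X_2$ and the last in $X_3$. A first vertex in $X_1-w$ or in $X_3$ is complete to the full copy $X_3$ or $X_2$ respectively. A last vertex in $X_1-w$ or in $X_2$ is complete to $X_2$ or $X_3$ respectively. Your ordering starts and ends in $X_1-w$, so it is ruled out at both ends.

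The missing idea is to use $3$-$\dic$-criticality on all three copies, not only on $X_1$. The paper sets aside one vertex $b\in X_2$ and one vertex $c\in X_3$. It takes $2$-dicolourings $(A_1,A_2)$, $(B_1,B_2)$, $(C_1,C_2)$ of $X_1-w$, $X_2-b$, $X_3-c$, and orders
\[
b\prec c\prec A_1\prec B_1\prec C_1\prec A_2\prec B_2\prec C_2,
\]
with each part ordered topologically so that it is stable in the backedge graph. The backedge graph is then contained in a blow-up of a fixed $8$-vertex graph, with the following adjacencies:
\[
b \sim A_1,B_1,A_2,B_2, \quad c \sim B_1,C_1,B_2,C_2, \quad A_1 \sim C_1,A_2,C_2, \quad B_1 \sim A_2,B_2, \quad C_1 \sim B_2,C_2, \quad A_2 \sim C_2.
\]
This graph is checked directly to be $K_4$-free, uniformly in $n$. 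Your fallback of a computer search for small $n$ would not establish the statement for all $n\ge 3$.
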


\begin{proof}
    Let $n \ge 3$ be an integer and $T=\Delta(A,B,C)$, where $A$, $B$, and $C$ are all isomorphic to $T_{2n+1}$. Consider an ordering $\prec$ of $V(T)$, and without loss of generality, assume that its first vertex is some $a\in A$. Then, every vertex of $C$ is adjacent to $a$ in $T^{\prec}$. Since $\diomega(C)=3$, we get that $\omega(T^{\prec})\ge 4$, and it follows that $\diomega(T)\ge 4$.

    It remains to prove that $\diomega(T\setminus x)=3$ for every $x\in V(T)$. By symmetry, it suffices to show this when $x \in A$. Let $b \in B$ and $c \in C$ be two vertices, and let $(A_1,A_2)$, $(B_1,B_2)$, and $(C_1,C_2)$ be $2$-dicolorings of $A \sm x$, $B \sm b$, and $C \sm c$ respectively (recall that $T_{2n+1}$ is 3-$\dic$-critical). Consider the ordering $\prec$ of $V(T) \sm x$ defined by
    \[
    b \prec c \prec A_1 \prec B_1 \prec C_1 \prec A_2 \prec B_2 \prec C_2,
    \]
    where for each $i \in \{1,2\}$, the sets $A_i$, $B_i$, and $C_i$ are internally ordered according to a topological ordering, so that each induces a stable set in $T^{\prec}$. Now, observe that $T^{\prec}$ is a subgraph of the graph with vertex set $V(T) \sm x$ in which
    \begin{itemize}
        \item $b$ is complete to $A_1 \cup B_1 \cup A_2 \cup B_2$;
        \item $c$ is complete to $B_1 \cup C_1 \cup B_2 \cup C_2$;
        \item $A_1$ is complete to $C_1 \cup A_2 \cup C_2$;
        \item $B_1$ is complete to $A_2 \cup B_2$;
        \item $C_1$ is complete to $B_2 \cup C_2$; and
        \item $A_2$ is complete to $C_2$.
    \end{itemize}
    One readily checks that the above graph, and thus $(T \setminus x)^{\prec}$ are $K_4$-free. Hence $\diomega(T \sm x) = 3$.
\end{proof}

The above construction can be generalised to build $(k+1)$-$\diomega$-critical tournament from any $k$-$\dic$-critical tournament $T$ with $\diomega(T) = k$, but we don't know if such tournaments exist for $k \ge 4$. 

\section{Open Problems}\label{sec:open}

In this section, we gather open problems about the directed clique number.

\medskip

Deciding $\diomega(D) \le k$ is in P for $k=1$, and NP-complete for $k \ge 3$ even when restricted to tournaments . The case $k=2$ is open and equivalent to deciding if a digraph admits a triangle-free backedge graph. 

\begin{problem}
    What is the complexity of deciding $\diomega(T) \le 2$ for tournaments / digraphs?
\end{problem}

We have established the existence of polynomial-time approximation algorithms for the directed clique number of tournaments. It is not known whether there are such algorithms for digraphs in general.

\begin{problem}
    Are there polynomial-time approximation algorithms for the directed clique number of digraphs in general?
\end{problem}

Gutowski and Rams \cite{complexity} proved that deciding if a digraph $D$ verifies $\diomega(D) \le k$ is $\Sigma_2^P$-complete when $k$ is given in the input, and conjectured that the same holds when restricted to tournaments.

\begin{problem}[Gutowski \& Rams \cite{complexity}]
    Is it $\Sigma_2^P$-hard to decide, given a tournament $T$ and an integer $k$, if $\diomega(T) \le k$?
\end{problem}

Kim \cite{Kim} and Aboulker {\it et al.} \cite{AACL23} showed that $\chi$-boundedness of a class of tournament is preserved by closure under substitution. The latter asked whether it is true for classes of digraphs in general, and if it holds for polynomial-$\dic$-boundedness (like in the undirected case \cite{substitution}). This question is discussed in \cite{pod} and \cite{Crew}.

\begin{problem}[Aboulker, Aubian, Charbit, \& Lopes \cite{AACL23}]
    Does closure under substitution preserves polynomial-$\dic$-boundedness of classes of tournaments? Does it preserves $\dic$-boundedness of classes of digraphs in general?
\end{problem}

Theorem \ref{thm:twinwidth} states that classes of tournaments with bounded twin-width are $\dic$-bounded. We may ask if they are polynomially $\dic$-bounded (as in the undirected case \cite{bourneuf}), and if it is true for classes of digraphs in general.

\begin{problem}
    Are classes of tournaments with bounded twin-width polynomially $\dic$-bounded? Are classes of digraphs with bounded twin-width $\dic$-bounded?
\end{problem}

The tournament analogue of the \Gya-Sumner conjecture (a tournament is $\dic$-bounding if and only if it that admit a backedge graph which is a forest) does not hold. We propose Conjecture \ref{conj:GS2} as a possible characterisation of $\dic$-bounding tournaments.

\begin{problem}[Aboulker, Aubian, Charbit, \& Lopes \cite{AACL23}]
    What are the $\dic$-bounding tournaments?
\end{problem}

Kim \cite{Kim} asked which finite sets of tournaments $\cal H$ are such that $\Forb(\cal H)$ has bounded dichromatic number. This follows the characterisation of tournaments $H$ such that $\Forb(H)$ has bounded dichromatic number by Berger {\it et al.} \cite{heroes}. The case of pairs is further investigated in \cite{KimKim}.

\begin{problem}[Kim \cite{Kim}]
    For which pairs of tournaments $\{T_1,T_2\}$ does $\Forb(T_1,T_2)$ have bounded dichromatic number?
\end{problem}

For each  $k \ge 3$, the NP-hardness of deciding $\diomega(T) \le k$ implies that they are infinitely many $k$-$\diomega$-critical tournaments. However, we do not have an explicit description of such families for $k > 5$.

\begin{problem}[Aboulker, Aubian, Charbit, \& Lopes \cite{AACL23}]
    Can we construct infinite families of $k$-$\diomega$-critical tournaments for every integer $k \ge 3$?
\end{problem}

The strong perfect graph theorem \cite{strong_perfect} is one the most celebrated result in graph theory. It would be interesting to study this notion for tournaments, or digraphs in general. 

\begin{problem}
    What are the perfect tournaments / digraphs?
\end{problem}

A natural question is: how large can the directed clique number of a $n$-vertex tournament be? Aboulker {\it et al.} exposed arbitrarily large tournaments satisfying $\diomega(T) \ge |V(T)|^{1/3-o(1)}$, whereas Gutowski and Rams \cite{complexity} proved that all tournaments verify $\diomega(T) \le \sqrt{2|V(T)|}$.

\begin{problem}[Aboulker et al. \cite{pod}]
    What is the maximum directed clique number of a $n$-vertex tournament (asymptotically)?
\end{problem}

Lastly, a curious question is whether all tournaments have an ordering which is both an $\diomega$-ordering and a $\dic$-ordering. We found no counter-examples, nor reasons to think it is true.

\begin{problem}[Aboulker, Aubian, Charbit, \& Lopes \cite{AACL23}]
    Does every tournament $T$ have an ordering $\prec$ such that $\diomega(T) = \omega(T^\prec)$ and $\dic(T)=\chr(T^\prec)$?
\end{problem}

\paragraph*{Acknowledgement} We thank Pierre Aboulker and Pierre Charbit for many insightful discussions and helpful comments. They were present at all stages, and participated to some of the results, in particular in \Cref{sec:crit}.

\bibliographystyle{plain}
\bibliography{refs}

\end{document}